\documentclass[12pt,a4paper]{article}
\usepackage[margin=1.2in]{geometry}

\usepackage{stmaryrd}
\usepackage{graphicx,amssymb,amsfonts,xcolor}
\usepackage{amsmath}

\usepackage{amsthm}

\newtheorem{theorem}{Theorem}[section]
\newtheorem{lem}[theorem]{Lemma}
\newtheorem{prop}[theorem]{Proposition}
\newtheorem{corollary}[theorem]{Corollary}

\definecolor{sapgreen}{rgb}{0.31, 0.49, 0.16}

\theoremstyle{definition}
\newtheorem{defn}[theorem]{Definition}
\newtheorem{rem}[theorem]{Remark}
\newtheorem{remark}[theorem]{Remark}
\newtheorem{excont}[theorem]{Example}

\newtheoremstyle{nodot}{}{}{}{}{\bfseries}{}{ }{}
\theoremstyle{nodot}
\newtheorem*{excont*}{Example}

\newcommand{\hide}[1]{}

\newcommand{\R}{\mathbb{R}}
\newcommand{\g}{\mathfrak{g}}

\newcommand\Ad{\mathcal{A}d}
\newcommand\Am{\mathbf{Ad}}

\title{On Poisson structures arising from a Lie group action}
\author{ G.\ M.\ Beffa and E.\ L.\ Mansfield}
\date{}

\begin{document}
\maketitle

\begin{abstract}We investigate some infinite dimensional Lie algebras and their associated Poisson structures which arise from a Lie group action on a manifold. 

If $G$ is a Lie group, $\g$ its Lie algebra and $M$ is a manifold on which $G$ acts, then the set of smooth maps from $M$ to $\g$ has at least two Lie algebra
structures, both satisfying the required property to be a Lie algebroid. We may then apply a {construction} by Marle to obtain a Poisson bracket on the set of smooth 
real or complex valued functions on $M\times \g^*$. In this paper, we investigate these Poisson brackets. We show that the set of examples include 
the standard Darboux symplectic structure and the classical Lie Poisson brackets, but is a strictly larger class of Poisson brackets than these.
Our study includes the associated Hamiltonian flows and their invariants, canonical maps induced by the Lie group action, and compatible Poisson structures. 
Our approach is mainly computational and we detail numerous examples.

The Lie brackets from which our results derive, arose from the consideration of connections on bundles with zero curvature and constant torsion. 
We give an alternate derivation of the Lie bracket which will be suited to applications to Lie group actions for applications not involving a Riemannian metric. 
We also begin a study of
the infinite dimensional Poisson brackets which may be obtained by considering a central extension of the Lie algebras. 

\end{abstract}

\section{Introduction}

A recent study on post-Lie algebras by Munthe--Kaas et al \cite{MKLund}, notes in passing the construction of two distinct 
Lie algebra brackets $[\, , \,]$ and $\llbracket\, ,\,\rrbracket$ on the vector space of maps from a manifold $M$ to a Lie algebra $\g$. The second bracket depends
on a Lie group action on the manifold, $G\times M\rightarrow M$, where $\g$ is the Lie algebra of $G$. 
It is also noted that both brackets make {the space of sections of the trivial bundle $M \times \g$, denoted here by $A^1(M,M\times \g)$,} into a Lie algebroid, which means that the Lie brackets have
a Leibnitz-like property. Meanwhile, a study of the differential calculus which is possible on Lie
Algebroids, by Marle, \cite{Marle}, shows how one may construct a Poisson structure on a different but related vector space
to $A^1(M,M\times \g)$. { Linear Poisson structures associated to a Lie algebroid were first considered in \cite{CDW}.}

In the first part of this paper, we study the Poisson structures so generated by the second bracket on {$A^1(M, M\times \g)$.
 If the group action is trivial, the standard Lie Poisson structure on the dual of the Lie algebra is obtained, while if the group action is translation in the coordinates on the manifold, then the standard
Darboux symplectic structure is obtained.  We discuss canonical actions for our Poisson bracket and compare examples
of the Hamiltonian flows they generate with those of the associated Lie Poisson bracket.
 If the Lie group action is, in some neighbourhood on the 
manifold, free and regular, so that we have, locally, a moving frame, we analyse the Hamiltonian systems further, in terms of the frame and the invariants of the action on $M$.
Examples include nonlinear actions of the orthogonal group and of $SL(2)$.

We show that if the group $G$ acts on $M$ using two different actions, then the two Poisson structures are generically compatible if, and only if, the actions differ by a translation at the infinitesimal level. 

The post-Lie agebras studied in \cite{MKLund} arise in the context of Riemanninan manifolds with zero curvature but constant torsion. To expand the range of examples and perhaps extend our results to discrete spaces, we introduce a product in {$A^1(M, M\times G)$, the set of section of the trivial bundle $G\times M$},  that gives it the structure of local Lie group. We show that $A^1(M, M\times\g)$ with the second Lie bracket is the Lie algebra of this local Lie group, thus providing a geometric interpretation of $\llbracket\, , \, \rrbracket$, and a description of the symplectic leaves of its associated Lie-Poisson bracket as its coadjoint orbits.

We close the paper with a preliminary study of the infinite dimensional Lie-Poisson brackets defined by these Lie algebra brackets. In particular, we assume that $M = S^1$ or $\R$ and we show that the standard cocycle used to describe central extensions of $C^\infty(M, \g)$ is also a cocycle when we consider the second bracket, $\llbracket\, , \,\rrbracket$ instead of the pointwise bracket. This allow us to define the central extension of the new algebra. We also describe a natural compatible Poisson companion to this central extension. The study of possible applications to the integrability of PDEs will appear elsewhere.

\subsection{Basic notions}\label{SecBasic}
{While the content of this section might be elementary to members of some pure mathematical communities, this paper was inspired by \cite{LieGpNum} and our intended audience comprises the computational and application-driven mathematical communities. Thus, we include some basic definitions, and always with a computational angle.}

A \textit{Lie group} $G$ is a group which is also a manifold, such that the operations of taking products and inverses are smooth maps.
We consider Lie group actions on manifolds. 
\begin{defn}\label{ActionDef} If $M$ is a manifold and $G$ a Lie group, then we say that
the map \[ G\times M \rightarrow M,\qquad (g,z)\mapsto  g\cdot z \mbox{ (left)},\qquad g\cdot_R z \mbox{ (right)} \]
is a \textit{left Lie group action} if
\[ h \cdot (g\cdot z))= (hg)\cdot z \]
or a \textit{right Lie group action} if
\[ h \cdot_R (g\cdot_R z))= (gh)\cdot_R z \]
\end{defn}

{Given that our paper is computationally oriented,} in all our calculations we will assume local coordinates $z=(z_1, \dots, z_p)$ on $M$, and so we may take $M$ to be an open neighbourhood of $\mathbb{R}^p$ for some integer $p\ge1$.

If $G\times M \rightarrow M$ is a left action, then the induced map on the set of functions $f:M\rightarrow \mathbb{R}$, given by
\[ (g\cdot_R f ) (z) = f(g\cdot z),\]
is a right action, since $h\cdot_R(g\cdot_R f ) (z) = h\cdot_R (f(g\cdot z)) = f(g\cdot(h\cdot z)) = f((gh)\cdot z)$, and vice versa.

The \textit{Lie algebra} $\mathfrak{g}$ associated to a Lie group $G$ is the vector space $T_eG$, that is, the tangent space to $G$ at the identity element $e\in G$, which we take to be
defined in the standard way, as equivalence classes of smooth curves passing through $e$ (see Hirsch \cite{Hirsch}). 
Given $v\in T_eG= \g$, there is a distinguished representative curve, denoted as $t\mapsto \exp(t v)\in G$ and called the \textit{exponential} of the vector $v$. 
We refer to \cite{AMP,Mansbook} for the standard details.

It is well known that the space $\mathfrak{g}$ has a \textit{Lie bracket}, $[\,,\,]:\mathfrak{g}\times \mathfrak{g} \rightarrow \mathfrak{g}$ obtained via a double differentiation of the conjugation\((g,h) \to g h g^{-1}\).
In the case of a matrix Lie group, the Lie bracket on $\mathfrak{g}$ 
is the 
standard Lie matrix bracket given by $[X,Y]=XY-YX$.
Given co-ordinates $g=g(a_1, \dots , a_r)$ on $G$ near the identity, with $e=g(0,\dots 0)$ and matching basis of $T_eG$ given (as equivalence classes of curves) by
\begin{equation}\label{LieAlggensfromGcoords}v_1=[g(t,0,\dots,0)], v_2=[g(0,t,0,\dots , 0)],\dots , v_r = [g(0,\dots,0,t)]\end{equation} we may calculate
 the bracket table for the Lie algebra with respect to this basis. This yields a skew-symmetric \textit{bracket table}, 
 $B=B(\mathbf{v})$, where
 $\mathbf{v}=(v_1,v_2,\dots v_r)$, given by
$B_{i,j}=[v_i,v_j]$. { Notice that the entries of $B$ are Lie algebra elements.}

A Lie group action on a manifold $M$ yields a representation of the Lie algebra  in $\mathfrak{X}(M)$, the vector space of vector 
fields on $M$.
Let $T_zM$ be the tangent space of $M$ at the point $z\in M$. Then for $v\in T_eG=\mathfrak{g}$ define
 the vector $X_v(z)\in T_zM$ by  
\begin{equation}\label{lambdadef}
X(v)(z) = \frac{{\rm d}}{{\rm d} t}\Big\vert_{t=0} \exp(t v)\cdot z \in T_zM.
\end{equation}
The vector field $X(v)$, given by \begin{equation}\label{InfVFdef}X(v): M\rightarrow TM,\qquad  z\mapsto  X(v)(z)\end{equation}    
is called the \textit{infinitesimal vector field} associated to the vector $v\in \mathfrak{g}$. 

Given co-ordinates  $z=(z^1, z^2, \dots, z^p)$ on $M$,  and a basis $\{v_i \, |\, i=1,\dots r\}$ of $\g$,
we will write $X({v_k})$ as $X_k$ for simplicity, with 
 \begin{equation}\label{InfGensFromLieAlgBasis}X_k = \sum_{\ell} \left(\frac{{\rm d}}{{\rm d}t}\Big\vert_{t=0} {(\exp(t v_k)\cdot z)^{\ell}} \right)\partial_{z^{\ell}}
 \end{equation}
{where $(\exp(t v_k)\cdot z)^{\ell}$ denotes {the  $\ell$-th coordinate} of $\exp(t v_k)\cdot z$}.   Thus,  
 there is a $p\times r$
\textit{matrix of infinitesimals}, $\Phi$, for the Lie group action, so that the infinitesimal vector fields can be written as
\begin{equation}\label{PhiInfVFdef}
\left(\begin{array}{c} X_1\\ \vdots \\ X_r\end{array}\right)= 
\Phi^T \left(\begin{array}{c} \partial_{z^1}\\ \vdots \\ \partial_{z^p}\end{array}\right).
\end{equation}

\begin{defn}\label{AssociatedBasisInfs} We say that the infinitesimal vector fields $X_k$, $k=1,\dots, r$ defined in Equation (\ref{InfGensFromLieAlgBasis}) are \textit{associated} to the basis $v_k$, $k=1,\dots r$ of $\g$.\end{defn}
Clearly, if $v=\sum c_k v_k$, then 
\begin{equation}\label{AssIsLinear} X(v)=\frac{{\rm d}}{{\rm d}t}\Big\vert_{t=0} \exp(t \sum c_k v_k)\cdot z = \sum c_k \frac{{\rm d}}{{\rm d}t}\Big\vert_{t=0} \exp(t  v_k)\cdot z =\sum c_k X_k.\end{equation}

{In our examples, we will always use infinitesimal vector fields associated to a basis of the Lie algebra calculated using coordinates near the identity of $G$, as  in Equation (\ref{LieAlggensfromGcoords}).}

\begin{excont}\label{mySL2projEx} Consider the Lie group $SL(2)$, 
 \[ SL(2) = \left\{ g(a,b,c,d)=\left(\begin{array}{cc} a &b\\ c&d\end{array}\right) \, \Big| \, ad-bc=1\right \}\]
 acting on $M=\mathbb{R} \cup \{\infty\}$ as
 \begin{equation}\label{mySL2projact} g(a,b,c,d)\cdot u =\left\{ \begin{array}{cl}\displaystyle\frac{au+b}{cu+d} & \quad cu+d \ne 0\\ [12pt]  
 \infty & \quad cu+d=0\end{array}\right.\end{equation}
 together with $g(a,b,c,d)\cdot \infty = a/c$. For $g(a,b,c,d)$ close to the identity, we have that 
 $d=(1+bc)/a$ and so the Lie algebra is
 \begin{equation}\label{sl2LAdef} \mathfrak{sl}(2)=\left\langle v_a = \left(\begin{array}{cc}1 & 0 \\ 0 & -1\end{array}\right),
 v_b=\left(\begin{array}{cc}0 & 1 \\ 0 & 0\end{array}\right),
 v_c=\left(\begin{array}{cc}0 & 0 \\ 1 & 0\end{array}\right)\right\rangle_{\mathbb{R}}.\end{equation}
For the action in Equation (\ref{mySL2projact}), we have
\[X({v_a})=\left(\frac{{\rm d}}{{\rm d} t}\Big\vert_{t=0} \exp(t v_a)\cdot u\right)\, \partial_u = 
\left(\frac{{\rm d}}{{\rm d} t}\Big\vert_{t=0} {\frac{e^{t}u}{ e^{-t}}}\right)\, \partial_u=2u\partial_u\]
and similarly \[X({v_b})=\partial_u,\qquad X({v_c}) = -u^2 \partial_u.\]
Hence the matrix of infinitesimals is
\[\Phi = \bordermatrix{ & a&b&c\cr u & 2u & 1 & -u^2}\]
It is instructive to compare the bracket tables for the two representations of $\mathfrak{sl}(2)$. We have
\[
 \begin{array}{c|ccc} [\, ,\, ] & v_a & v_b & v_c \\ \hline
 v_a & 0& 2 v_b & -2 v_c\\ v_b & -2 v_b & 0 & v_a\\ v_c & 2 v_c& -v_a & 0
 \end{array}
\qquad 
\begin{array}{c|ccc} [\, ,\, ] & X(v_a) & X(v_b) & X(v_c) \\ \hline
  X(v_a)& 0& -2 X(v_b) & 2X(v_c)\\X(v_b) & 2 X(v_b) & 0 & -X(v_a)\\ X(v_c) & -2 X(v_c)& X(v_a) & 0
 \end{array}
\]
where the bracket on the left is the standard matrix bracket, while the bracket on the right is the standard conmutator of
vector fields, $[X,Y](f)=X(Y(f))-Y(X(f))$.
Hence the isomorphism from the matrix representation of $\mathfrak{sl}(2)$ to that in terms of the infinitesimal vector fields is
\[v\mapsto {-} X(v).\]
\end{excont}

The Adjoint action of a Lie group on its Lie algebra plays a key role in all our calculations. For a matrix Lie group, the left Adjoint action
is defined as conjugation
\begin{equation}\label{AdjDefn1}
G\times \mathfrak{g}\rightarrow \mathfrak{g},\qquad (g, x)\mapsto \Ad(g) x = g x g^{-1}.
\end{equation}
 $\Ad(g)$ is an invertible linear map from $\mathfrak{g}$ to itself and defines a representation of 
$G$ in  $\mathfrak{gl}(\mathfrak{g})$.

{The  Adjoint action of $G$  on vector fields 
can be defined by viewing the action by $g$ as a diffeomorphism of $M$. 
This group of diffeomorphisms is a Lie group and its algebra can be identified with vector fields on $M$. 
The left Adjoint action on vector fields is then defined as 
\[\Ad(g): T_zM\rightarrow T_{g\cdot z}M,\qquad {\Ad(g) (X) (z) = Tg \circ X (g^{-1}z)}\]
so that the following diagram commutes,
\[ \begin{array}{ccl} TM & \stackrel{Tg}{\rightarrow} & TM \\
    X \uparrow & & \uparrow \Ad(g)(X)\\
    M & \stackrel{g\cdot }{\rightarrow} & M 
   \end{array}
\]
}
In coordinates, if we write $\nabla_z = (\partial_{z^1}, \partial_{z^2},\dots, \partial_{z^p})^T$ then we have that for a vector field $X=\sum f_j(z)\partial_{z^k}= \mathbf{f}(z)^T\nabla_z$
\[ \Ad(g) (X) = Tg\left(\mathbf{f}^T(g^{-1}\cdot z)\nabla_z|_{g^{-1}\cdot z}\right)= \mathbf{f}^T(g^{-1}\cdot z)\left(\frac{\partial \left(g^{-1}\cdot z\right)}{\partial z}\right)^{-T}\nabla_z
\]
\[
=\left(\left(\frac{\partial \left(g^{-1}\cdot z\right)}{\partial z}\right)^{-1}\mathbf{f}(g^{-1}\cdot z)\right)^T\nabla_z
\]
The representation of the Adjoint linear action on the coefficients $\mathbf{f}$ is thus given by 
\begin{equation}\label{CoeffsTransAdj}
\mathbf{f}(z) \mapsto \left(\frac{\partial \left(g^{-1}\cdot z\right)}{\partial z}\right)^{-1}\mathbf{f}(g^{-1}\cdot z).
\end{equation}

We have that if $X(v)$ is the field generated by $v\in \g$, then $\Ad(g)(X(v)) = X(\Ad(g)(v))$.
Indeed, we have  for $X=X(v_k)$ that
\[ \begin{array}{rcl}
\Ad(g)(X(v_k))&=&Tg \circ X \circ g^{-1} (z)\\ &=& Tg \frac{{\rm d}}{{\rm d}t}\Big\vert_{t=0} \exp(t v_k) \cdot g^{-1}\cdot z\\
&=& \frac{{\rm d}}{{\rm d}t}\Big\vert_{t=0} g \exp(t v_k) g^{-1} \cdot z\\
&=& \frac{{\rm d}}{{\rm d}t}\Big\vert_{t=0}  \exp(t (gv_kg^{-1}) ) \cdot z\\
&=& X(\Ad(g)(v_k)).\end{array}
\]
The result then follows by linearity. 

Given a basis $v_k$, $k=1,\dots, r$ of $\g$, we know that the Adjoint of a vector in $\g$ is a linear combination of these basis vectors. Using Equation (\ref{InfGensFromLieAlgBasis}), if $\Ad(g)(\sum f_k v_k)=\sum v_k \Am(g) f_j$, 
where $\Am(g)$ is the matrix representation of $\Ad(g)$, we have for the associated infinitesimal vector fields  that
$ \Ad(g) \sum f_kX_k = \sum X_k \Am(g) f_j$  or \begin{equation}\label{PhiTransAdj}\Phi^T\nabla_z \mapsto \Am^T(g)\Phi^T\nabla_z .\end{equation}
Combining this with Equation (\ref{CoeffsTransAdj}) yields
\begin{equation}\label{PhiUltimateEqn}
\left(\frac{\partial \left(g^{-1}\cdot z\right)}{\partial z}\right)^{-1}\Phi(g^{-1}\cdot z)= \Phi(z)\Am(g).
\end{equation}

We now illustrate that using associated infinitesimal vector fields leads to the same matrix representation of the Adjoint matrix $\Am(g)$,

\begin{excont*} \textbf{\ref{mySL2projEx} (cont.)} For the action in equation (\ref{mySL2projact}), recall
\[\Phi = \bordermatrix{ & a&b&c\cr u & 2u & 1 & -u^2}.\]
We have
\[ g(a,b,c,d)^{-1}\cdot u = \displaystyle\frac{du-b}{-cu+a},\qquad 
\left(\frac{\partial (g^{-1}\cdot u)}{\partial u}\right)^{-1} \frac{\partial}{\partial u}
=(-cu+a)^2 \frac{\partial}{\partial u}
\]
so that, for example
\[\Ad(g)(X_a) = 2 (du-b)(-cu+a)\partial_u= (ad+bc)X_a -2ab X_b +2cd X_c.\]

The complete calculation gives
\begin{equation}\label{AdgTSL2}\Phi^T\partial_u= \left(\begin{array}{c} X_a\\ X_b \\ X_c \end{array}\right)\mapsto 
\left(\begin{array}{ccc} ad+bc& -2ab & 2cd\\ -ac & a^2 & -c^2 \\ bd& -b^2 & d^2\end{array}\right)
\left(\begin{array}{c} X_a\\ X_b \\ X_c \end{array}\right)=\Am(g)^T\Phi^T\partial_u\end{equation}
where, as above, this defines the matrix $\Am(g)$.

Let's set
\[ g=\left(\begin{array}{cc} a&b\\ c&d\end{array}\right)\in G,\qquad x(\alpha,\beta,\delta)
=\left(\begin{array}{cc} \alpha & \beta \\ \delta & -\alpha
\end{array}\right)\in \g \]
into $ \Ad(g)(x)= g x g^{-1}=x(\widetilde{\alpha},\widetilde{\beta},\widetilde{\delta})$, the expression defining
$\widetilde{\alpha}$, $\widetilde{\beta}$, $\widetilde{\delta}$. We can compare the Adjoint action above with that on the matrix representation of $\mathfrak{sl}(2)$, given in
equation (\ref{sl2LAdef}), observing that
\[ \left(\begin{array}{c} \widetilde{\alpha}\\ \widetilde{\beta}\\ \widetilde{\delta}\end{array}\right)
=\left(\begin{array}{ccc} ad+bc & -ac & db \\ -2ab & a^2 & -b^2\\ 2cd & -c^2 & d^2 \end{array}\right)
\left(\begin{array}{c} {\alpha}\\ {\beta}\\ {\delta}\end{array}\right)
=\Am(g)\left(\begin{array}{c} {\alpha}\\ {\beta}\\ {\delta}\end{array}\right)\]
which is precisely the same matrix representation previously obtained.
\end{excont*}

\begin{defn} Let $\mathcal{C}^{\infty}(M,\mathbb{R})$ denote the set of smooth real-valued functions on $M$. A \textit{Poisson bracket} is a map,
\[ \{\, , \, \} : \mathcal{C}^{\infty}(M,\mathbb{R})\times \mathcal{C}^{\infty}(M,\mathbb{R}) \rightarrow \mathcal{C}^{\infty}(M,\mathbb{R}),\qquad
(F, G)\mapsto \{ F,G\} \]
is called a Poisson bracket if for all $a$, $b\in \mathbb{R}$, $F$, $G$ $H\in \mathcal{C}^{\infty}(M,\mathbb{R})$
\begin{enumerate}
\item $ \{ F, G\}=-\{G,F\}$ (skew-symmetry)
\item $\{ aF +b G, H\}=a\{F,H\}+b \{G,H\}$, (bilinearity)
\item $0=\{F,\{G,H\}\}+\{G,\{H,F\}\}+\{H,\{F,G\}\}$ (Jacobi identity)
\item $\{F,GH\}= \{F,G\}H + G\{F,H\}$ (Leibnitz identity)
\end{enumerate}
Given co-ordinates $z=(z^1,\dots z^p)$ on $M$, then viewing these co-ordinates $z^k$ as functions on $M$, we may define the 
\textit{Poisson structure matrix} $\Lambda$ for the Poisson bracket as 
\[\Lambda=(\Lambda_{ij}),\qquad  \Lambda_{ij} = \{ z^i, z^j\}.\]\end{defn}

{If $M=\g^\ast$, then $M$ has a distinguished Poisson bracket called the Lie Poisson bracket. Specifically, let
$\{v_1, \dots v_r\}$ be a basis of $\mathfrak{g}$, and $\{\theta^1, \dots, \theta^r\}$ its dual basis in $\mathbf{g}^*$ so that $\theta^i(v_j) = \delta_i^j$.  If $f\in C^\infty(\g^\ast)$, we define the variational derivative of $f$ at $\xi\in\g^\ast$, and we denote it by $\delta_\xi f$ (or $\delta f(\xi)$) as the {\it element of $\g$} such that 
\[
\frac d{d\epsilon}|_{\epsilon=0} f(\xi+\epsilon \nu) = \nu (\delta_\xi f)
\]
for all $\nu\in \g^\ast$.} 

{Define the \textit{Lie Poisson} bracket on  $C^\infty(\mathfrak{g}^*)$ by the formula
 \[ 
 \{ f, g \}(\xi)= \xi \left([ \delta_\xi f, \delta_\xi g]\right).
 \]
} 
 {Assume $\delta_\xi f =  \sum_i v_i\partial_{i} f $, where $\partial_{i} f$ are defined by this relationship. Then, if $[v_i, v_j] = \sum_k c_{ij}^k v_k$,
 the Lie-Poisson bracket can be written as
 \begin{equation}\label{L-P}
  \{ f, g \}(\xi)= \xi \left(\sum_{i,j,k} c_{i,j}^k v_k\partial_{i} f \partial_{j} g \right),
 \end{equation}
 or
 \[ \{ f, g \}(\theta^k)= \sum_{i,j}c_{i,j}^k \partial_{i} f \partial_{j} g. 
 \] 
 Denote by $\xi_i$,$i=1,\dots r$ the linear coordinate functions on $\g^\ast$ representing $v_i$, $i=1,\dots r$, thinking of $v_i$ as the dual to $\theta^i$, so that $\xi_i(\xi) = v_i(\xi)$; we say in this case that the basis $\{\xi_i\}$ is
 \textit{associated} to the basis $\{ v_i\}$. Then
 \begin{equation}\label{dualdualbr}
 \{\xi_i, \xi_j\}(\theta^k) = c_{i,j}^k ~~ \text{or}~~ \{\xi_i, \xi_j\} = \sum_k c_{i,j}^k \xi_k ~~\text{which is the bracket in $\g$}.
 \end{equation}
 }

We now note the following two interesting facts which we will use in the sequel. {The first is a well-known
change of variable in our computationally oriented notation.} 
Let $\Am(g)$ be the matrix representation of the adjoint action as in (\ref{PhiUltimateEqn}).

{If the
Adjoint action on the Lie Algebra is given in matrix form as $(v_1, \dots, v_r)\mapsto (v_1, \dots, v_r)\Am(g)$ then
the induced action on $\mathfrak{g}^{**}$ 
is given by 
\begin{equation}\label{AdjXiAct}\boldsymbol{\xi}\mapsto \boldsymbol{\xi} \Am(g).  \end{equation}
where $\boldsymbol{\xi} = (\xi_1,\dots,\xi_r)$ and the $\xi_i$ are coordinates on $\mathfrak{g}^{*}$.} 
\begin{prop}[The induced Adjoint action on the Lie Poisson structure]
\label{prop3things}
Let $G$ a Lie group with $\dim G=r$, with $\mathfrak{g}$ its Lie algebra, and with $\g^{**}$ having basis $\xi_1, \dots, \xi_r$, viewed as coordinates on $\g^\ast$ associated to the basis of $\g$,  $v_1, \dots, v_r$ . Using these coordinates, let the  Lie Poisson structure matrix for $\mathfrak{g}^*$ be denoted as $\Lambda=\Lambda(\boldsymbol{\xi})$.
\begin{itemize}
\item[1.]
The Lie Poisson structure matrix transforms as
\[ \Lambda({\boldsymbol{\xi}}\Am(g) ) = \Am(g)^T \Lambda(\boldsymbol{\xi}) \Am(g).\]
\item[2.] The infinitesimal matrix  for the {co-}Adjoint action on $\mathfrak{g}^*$ is the negative of the Lie Poisson structure matrix.
\end{itemize}
\end{prop}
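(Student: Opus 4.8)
The plan is to treat the two items in turn, both as direct computations in the associated coordinates $\xi_1,\dots,\xi_r$ on $\g^\ast$, using formula (\ref{dualdualbr}) which identifies the Lie Poisson structure matrix with the structure constants: $\Lambda_{ij}(\boldsymbol\xi) = \{\xi_i,\xi_j\} = \sum_k c_{ij}^k \xi_k$. Thus $\Lambda$ is \emph{linear} in $\boldsymbol\xi$, which is the feature that makes both statements clean.

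For item 1, I would start from the fact that, for each fixed $g$, the linear map $\boldsymbol\xi\mapsto\boldsymbol\xi\,\Am(g)$ on $\g^{\ast\ast}\cong\g^\ast$ is the coadjoint action dual to $v\mapsto \Ad(g^{-1})v$ — equivalently, the fact recorded in (\ref{AdjXiAct}) that the basis of $\g$ transforms by $\Am(g)$ and the coordinates $\xi_i$ transform by the same matrix on the right. The Lie Poisson bracket is intrinsically defined via the bracket of $\g$ (formula (\ref{L-P})), which is preserved under any Lie algebra automorphism, and $\Ad(g)$ is such an automorphism; hence the Lie Poisson bracket is invariant under the induced change of coordinates. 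Concretely: write $\eta_i = (\boldsymbol\xi\,\Am(g))_i = \sum_a \xi_a \Am(g)_{ai}$. Then $\{\eta_i,\eta_j\} = \sum_{a,b}\Am(g)_{ai}\Am(g)_{bj}\{\xi_a,\xi_b\} = \sum_{a,b}\Am(g)_{ai}\Am(g)_{bj}\,\Lambda_{ab}(\boldsymbol\xi)$, which is exactly the $(i,j)$ entry of $\Am(g)^T\Lambda(\boldsymbol\xi)\Am(g)$. On the other hand, because $\Ad(g)$ is a Lie algebra automorphism, $\{\eta_i,\eta_j\}$ also equals $\Lambda_{ij}$ evaluated at the transformed point, i.e. $\Lambda_{ij}(\boldsymbol\xi\,\Am(g))$; this last identification is the one genuinely substantive step and amounts to checking $\Am(g)^T C^k \Am(g) = \sum_\ell \Am(g)_{k\ell}\,C^\ell$ where $C^k = (c_{ij}^k)$, which is precisely the statement that $\Am(g)$ intertwines the structure constants — i.e. the infinitesimal version of $\Ad(g)[x,y]=[\Ad(g)x,\Ad(g)y]$. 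Equating the two expressions gives the claimed transformation law.

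For item 2, I would differentiate item 1 along a curve $g=\exp(t w)$ at $t=0$. Write $\am(w) := \frac{d}{dt}\big|_{t=0}\Am(\exp(tw))$ for the infinitesimal adjoint matrix (the matrix of $\mathrm{ad}_w$ in the basis $v_1,\dots,v_r$). Differentiating $\Lambda(\boldsymbol\xi\,\Am(\exp tw)) = \Am(\exp tw)^T\Lambda(\boldsymbol\xi)\Am(\exp tw)$ and using linearity of $\Lambda$ in its argument, the left side gives $\Lambda(\boldsymbol\xi\,\am(w)) = \boldsymbol\xi\,\am(w)\cdot(\nabla\Lambda)$, while the right side gives $\am(w)^T\Lambda(\boldsymbol\xi) + \Lambda(\boldsymbol\xi)\am(w)$. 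More usefully, I would argue directly at the level of the infinitesimal action on $\g^\ast$: the vector field on $\g^\ast$ generating the coadjoint action in direction $w$ has components, in the $\xi$-coordinates, given by $\boldsymbol\xi\mapsto \boldsymbol\xi\,\am(w)$, so the matrix of infinitesimals for the coadjoint action (in the sense of (\ref{PhiInfVFdef})) has $(i,k)$-entry $\sum_j \am(w)_{jk}\,\xi_j$ when $w=v_i$, i.e. $\sum_j (\am(v_i))_{jk}\xi_j = \sum_j c_{ij}^k\xi_j$. Comparing with $\Lambda_{ik}(\boldsymbol\xi) = \sum_j c_{ik}^j\xi_j$ and using skew-symmetry $c_{ik}^j$ versus $c_{ij}^k$ together with the sign in the Hamiltonian vector field convention $X_F = \Lambda\,\nabla F$ applied to $F=\xi_i$, one reads off that the infinitesimal matrix equals $-\Lambda$. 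The bookkeeping of this last sign — tracking the convention by which a Hamiltonian $\xi_i$ generates the flow of the corresponding element of $\g$ acting coadjointly, versus the left/right convention in Definition \ref{ActionDef} — is the only place where care is required; the rest is linear algebra with the structure constants.

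I expect the main obstacle to be purely notational: keeping consistent the several dualities in play (the identification $\g^{\ast\ast}\cong\g$, the transpose appearing in (\ref{AdjXiAct}), and the left-versus-coadjoint sign), so that the automorphism identity $\Am(g)^T C^k\Am(g)=\sum_\ell\Am(g)_{k\ell}C^\ell$ is invoked with the correct placement of $\Am(g)$ versus $\Am(g)^{-1}$. Once that identity is pinned down — and it follows immediately by differentiating $\Ad(g)$ being an automorphism, or can simply be cited from the displayed computation preceding the proposition — both parts are short. No deep input beyond the definitions and the fact that $\Ad(g)$ preserves the Lie bracket is needed.
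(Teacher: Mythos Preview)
Your approach is correct and essentially matches the paper's own proof. For item 1 the paper simply declares it ``a direct linear algebra consequence of (\ref{dualdualbr}) and (\ref{AdjXiAct})'', and your computation with $\eta_i=(\boldsymbol\xi\,\Am(g))_i$ together with the automorphism identity for $\Ad(g)$ is exactly the detail that fills this in; for item 2 the paper, like your ``direct'' argument, differentiates $\Ad(\exp(tv_i))\xi_j$ at $t=0$ to obtain $\sum_\nu c_{ij}^\nu\xi_\nu=\{\xi_i,\xi_j\}$ and then reads off $\Phi_{ij}=-\Lambda_{ij}$, so the only thing to tidy is the index bookkeeping you already flagged.
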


\begin{proof}
\begin{itemize}
\item[1.]  The first result is a direct linear algebra consequence of (\ref{dualdualbr}) and (\ref{AdjXiAct}).  
\item[2.] 
{Let $g(t)=\exp(tv_i)$. Then from the definition of the Adjoint action, we have
\[ \begin{array}{rcl}
\frac{{\rm d}}{{\rm d}t} \Big\vert_{t=0} \Ad(\exp(tv_i) ) v_j &=& \frac{{\rm d}}{{\rm d}t} \Big\vert_{t=0}
\exp(tv_i) v_j \exp(-tv_i)\\
&=& [v_i,v_j]\\
&=& \sum_{\nu} c^{\nu}_{ij}v_{\nu}\end{array}. \] 
By (\ref{AdjXiAct}), the same transformation rules apply, namely
\[ \frac{{\rm d}}{{\rm d}t} \Big\vert_{t=0} \Ad(\exp(tv_i) ) \xi_j = \sum c^{\nu}_{ij}\xi_{\nu} = \{ \xi_i, \xi_j\},\]
as described in (\ref{dualdualbr}). By definition of the infinitesimal matrix,
\[ (\Phi^T)_{ij}= \frac{{\rm d}}{{\rm d}t} \Big\vert_{t=0} \Ad(\exp(tv_i) ) \xi_j = \{ \xi_i, \xi_j\} \]
 giving $\Phi_{ij}=\{ \xi_j, \xi_i\}=-\{\xi_i,\xi_j\}$
as required. 
}
\end{itemize}
\end{proof}

\section{{Poisson brackets on Lie algebroids}}

{If
$E$ is a bundle with base space $M$ and fibre $\mathfrak{g}$, 
we denote the space of sections of $E$ as $A^1(M,E)$, following  Marle \cite{Marle}. Notice that $A^1(M, M\times \g)$ is 
isomorphic to $C^\infty(M, \g)$. 
As all our calculations are local, so that the 
subtleties of global obstructions do not arise in this paper, we will restrict to the trivial case.}

\subsection{Two Lie algebra structures in $C^\infty(M, \g)$}
We recall two definitions of Lie brackets on $C^1(M,\mathfrak{g})$, given in \cite{MKLund}.

\begin{defn}[The first Lie bracket]
Let $x,y:M\rightarrow \mathfrak{g} \in C^1(M,\mathfrak{g})$ and let $[\, , \, ]$ denote the Lie bracket on $\mathfrak{g}$. Then we  define
the first, pointwise Lie bracket on $C^1(M,\mathfrak{g})$, to be
\begin{equation}\label{pointwisebracketDef}\phantom{} [ x , y ](z) = [x (z), y(z)].
\end{equation}
\end{defn}

\begin{remark} Notice that if we use the representation of the Lie algebras as infinitesimal vector fields, that is, as first order operators, then
the bilinear property for this first bracket
\[ \left[\textstyle \sum x^i(z) v_i, \sum y^j(z) v_j\right] = \textstyle\sum x^i(z)y^j(z)[v_i,v_j] \]
still applies, even though
this representation is not linear with respect to multiplication of functions.
For this reason when calculating this first bracket,
we will  be using a (faithful) matrix representation. It should be noted that the isomorphism between the two representations is
\[ v\mapsto -X(v).\]
\end{remark}
A second Lie bracket may be defined in terms of a given Lie group action, $G\times~ M\rightarrow~M$. Let $v_1, \dots, v_r$ be a basis of $\mathfrak{g}$, 
and $X(v_i) = X_i$, $i=1, \dots,r$, their infinitesimal vector fields. We write these as
\[ (X_1, \dots, X_r)^T=\Phi(z)^T \nabla_z\]
where $\Phi$ is the matrix of infinitesimals given in Equation (\ref{PhiInfVFdef}).

Now let  $x: M\rightarrow \mathfrak{g}$, be given by $x(z)=\sum x^j(z) v_j$. We  define the vector field $\rho(x)$ on $M$ to be 
\begin{equation}\label{rhoxdef}
\rho(x)(z) = {X(x(z))} = \sum x^j(z) X_{j} = (\Phi\mathbf{x})^T(z) \nabla_z.
\end{equation}
Next, given a second map $y:M\rightarrow \mathfrak{g}$, $y=\sum y^j v_j$, define
\begin{equation}\label{xTriydef}
 \mathcal{L}_{\rho(x)} y = \sum \rho(x)(y^j) v_j
\end{equation}
the component-wise Lie derivative of $y$ along the vector field $\rho(x)$.  We note that $\mathcal{L}_{\rho(x)}$ is a derivation
on $C^1(M,\g)$.


We are now in a position to define a second Lie bracket. 

\begin{defn}[The second Lie bracket] If the action of $G$ on $M$ is a left action, then we define the \textit{second} Lie bracket on $A^1(M,\mathfrak{g})$, to be 
\begin{equation}\label{leftddblbracketDef} \llbracket x , y \rrbracket =  \mathcal{L}_{\rho(x)} y -  \mathcal{L}_{\rho(y)} x - [x , y]
\end{equation}
where the final summand is the first, point wise Lie bracket defined above.
If the action of $G$ on $M$ is a right action, we define the bracket to be
\begin{equation}\label{rightddblbracketDef}  \llbracket x , y \rrbracket = -\mathcal{L}_{\rho(x)} y +  \mathcal{L}_{\rho(y)} x - [x , y].
\end{equation}
\end{defn}

The Jacobi identity may be verified using the interpretation of this bracket in \S\ref{SecAlggeomBr} as the Lie bracket associated to a certain local Lie group. 

\begin{rem} If the action of $G$ on $M$ is a right action, $(g, z)\mapsto g\cdot_R z$, we can convert it to be a left action by 
considering $g\cdot z = g^{-1}\cdot_R  z$. This reverses the sign of 
$\Phi$ and in this sense, the two definitions, (\ref{leftddblbracketDef}) and (\ref{rightddblbracketDef}), are consistent.
\end{rem}
\begin{rem} As we have defined this bracket, in coordinates, it is important to note that the same coordinates on $G$ near the identity $e\in G$ are used to
 obtain both the basis vectors of the (faithful) representation $\g$ and the infinitesimal vector fields.
\end{rem}
\begin{rem}
 It can be seen that if the group action is trivial, that is, $g\cdot z = z$ for all $g\in G$, then we recover the
negative of the pointwise bracket, (\ref{pointwisebracketDef}) which is consistent with our choice of the matrix representation
to calculate that bracket.
\end{rem}

\subsection{A Lie algebroid structure on $C^\infty(M, \g)$}
{I've removed all mention of $A^1$ from here now.}
{The Lie brackets of the previous subsection make $A^1(M,M\times \mathfrak{g}) \simeq C^\infty(M, \g)$ into a \textit{Lie algebroid}}. 

\begin{defn}[Lie algebroid]\label{anchorDef}
{
We say that $C^1(M,\g)$ is a Lie algebroid if there is a bundle map $\rho$}, that is, a map preserving the base point, called the {\em anchor\/} map,
$$\rho:C^1(M,\g) \rightarrow \mathfrak{X}(M)$$
where $\mathfrak{X}(M)$ is the set of smooth vector fields on $M$,  such that for $x$, $y\in C^1(M,\g)$, and  $f:M\rightarrow \mathbb{R}$ a smooth map,
\begin{equation}\label{defnAnchor} \llbracket x, f y \rrbracket = f \llbracket x,  y \rrbracket  + \mathcal{L}_{\rho(x)}(f )\ y.\end{equation}
\end{defn}

\begin{rem}\label{RemRhoLieAlgHomSB}The definition implies that
$$ \rho\left( \llbracket x,  y \rrbracket \right)= \left[ \rho(x), \rho(y)\right]$$
where the bracket on the right is the bracket of vector fields in $\mathfrak{X}(M)$. In other words, the anchor map is a Lie algebra homomorphism from $C^1(M,\g)$ to $\mathfrak{X}(M)$, which preserves the base point, (cf.\ \cite{Kosman}).

Indeed, consider \[ \begin{array}{rcl}
\llbracket z, \llbracket x, fy\rrbracket\rrbracket &=& \llbracket z , f \llbracket x,  y \rrbracket  + \mathcal{L}_{\rho(x)}(f )\ y\rrbracket \\
&=&\llbracket z , f \llbracket x,  y \rrbracket  +\llbracket z , \mathcal{L}_{\rho(x)}(f )\ y\rrbracket \\
&=& f \llbracket z, \llbracket x,y\rrbracket\rrbracket + \mathcal{L}_{\rho(z)}(f )\ \llbracket x, y\rrbracket+\mathcal{L}_{\rho(x)}(f )\ \llbracket z, y\rrbracket +  \mathcal{L}_{\rho(z)}( \mathcal{L}_{\rho(x)}(f ))y
\end{array}
\]
applying Equation (\ref{defnAnchor}) to each summand. Similarly, we have,
\[ \llbracket x, \llbracket z, fy\rrbracket\rrbracket  = f \llbracket x, \llbracket z,y\rrbracket\rrbracket + \mathcal{L}_{\rho(x)}(f )\ \llbracket z, y\rrbracket\\+\mathcal{L}_{\rho(z)}(f )\ \llbracket x, y\rrbracket +  \mathcal{L}_{\rho(x)}( \mathcal{L}_{\rho(z)}(f))y\]
and further, we have,
\[ \llbracket \llbracket x,z\rrbracket , fy\rrbracket = f \llbracket \llbracket x,z\rrbracket , y\rrbracket +  \mathcal{L}_{\rho(\llbracket x,y\rrbracket)}(f )\ y.\]
Applying the Jacobi identity in the form,
\[\llbracket x , \llbracket z, fy \rrbracket\rrbracket -\llbracket z , \llbracket x , fy \rrbracket\rrbracket =\llbracket \llbracket x,z \rrbracket ,fy \rrbracket  \]
yields the result,
\[   \mathcal{L}_{\rho(x)}( \mathcal{L}_{\rho(z)}(f ))- \mathcal{L}_{\rho(z)}( \mathcal{L}_{\rho(x)}(f ))= \mathcal{L}_{\rho(\llbracket x,z\rrbracket)}(f ) \]
as desired.
\end{rem}

The anchor map allows a bundle $E$ to be viewed as a proxy for $T(M)$.
A great deal more can be said, indeed the anchor map allows for proxy Lie derivatives, exterior derivatives and many other constructions, \cite{Marle}. 

The pointwise Lie bracket (\ref{pointwisebracketDef}) makes $C^\infty(M,\g)$ a Lie algebroid with a zero anchor map. The second Lie bracket, (\ref{leftddblbracketDef}) or (\ref{rightddblbracketDef}), 
makes $C^\infty(M,\g)$ a Lie algebroid with the anchor map being precisely $\rho(x)$,  Equation (\ref{rhoxdef}), and hence the reason for our choice of notation.

In the examples which follow, we use the standard coordinate names arising in the applications.

\begin{excont}\label{RunEgTrans}
We set $M=\mathbb{R}$ with coordinate labelled as $t$, and $G=(\mathbb{R}, +)$, that, the real numbers under addition, with the group action being
$$ \epsilon\cdot t = t+\epsilon.$$ Then the Lie algebra is $\mathfrak{g}=\mathbb{R}$ with the trivial Lie bracket and the single
 infinitesimal vector field is $\partial_t$. Hence for $x:M\rightarrow \g$, $\rho(x)=x(t)\partial_t$ and thus
 for $x, y:M \rightarrow \mathfrak{g}$, we have
$$ \llbracket x , y \rrbracket  = x y_t - y x_t.$$
The proof of the Jacobi identity is straightforward given the maps into $\mathfrak{g}$ are commuting scalars.
\end{excont}

\begin{excont*} \textbf{\ref{mySL2projEx} (cont.)}
Recall the projective action of $SL(2)$ on $M=\mathbb{R}$ given in Equation (\ref{mySL2projact}) and the Lie algebra $\mathfrak{sl}(2)$ 
given in Equation (\ref{sl2LAdef}).

A map $x:M\rightarrow \mathfrak{g}$ takes the form
$$ u\mapsto x(u) =\left(\begin{array}{cc} x^1(u) & x^2(u)\\ x^3(u) & -x^1(u)\end{array}\right) .$$
Given two such maps $x$, $y$ we have
$$ \mathcal{L}_{\rho(x)}y = \left(\begin{array}{cc}  {\rho(x)} y^1(u) &  {\rho(x)} y^2(u)\\  {\rho(x)}  y^3(u) & - {\rho(x)} y^1(u)\end{array}\right) $$
where
$$\rho(x) 
= \left(2u x^1(u) +x^2(u) -u^2 x^3(u) \right)\partial_u.$$

It can be verified directly that the second bracket (\ref{leftddblbracketDef}), given explicitly in this case using the
standard matrix representation as,
\begin{equation}\label{ProjActDblBr}
\begin{array}{rcl}  \llbracket x, y\rrbracket &=&  \left(2u x^1(u) +x^2(u) -u^2 x^3(u) \right)
 \left(\begin{array}{cc} y^1_u & y^{2}_{u}\\ y^{3}_{u} & - y^1_u\end{array}\right) \\[15pt]&& \quad -
\left(2u y^1(u) +y^2(u) -u^2 y^3(u) \right) \left(\begin{array}{cc} x^{1}_{u} & x^{2}_{u}\\ x^{}_{u} & - x^{1}_{u}\end{array}\right)\\[15pt]
&&
\quad - x(u)y(u)+y(u)x(u).
\end{array}
\end{equation}
satisfies the Jacobi identity.
\end{excont*}

\begin{excont}\label{RunEgLeftSE2} We consider $G=SE(2)=SO(2)\ltimes \mathbb{R}^2$.
A matrix representation of $G=SE(2)$ is
\[ g(\theta,a,b)= \left(\begin{array}{ccc} \cos\theta & -\sin\theta & a\\ \sin\theta & \cos\theta & b\\ 0&0&1\end{array}\right)\]
so that the Lie algebra is
\[ \mathfrak{se}(2)=\left\{ \alpha v_{\theta}+\beta v_a + \delta v_b=
\left(\begin{array}{ccc}0 &-\alpha & \beta\\ \alpha & 0 & \delta\\0&0&0
 \end{array}\right)\, | \, \alpha, \beta, \delta \in \mathbb{R}\right\}\]
 where this defines $v_{\theta}$, $v_a$ and $v_b$.
We consider the standard, left linear action of 
$SE(2)$ on $M=\mathbb{R}^2$ with coordinates $x$ and $y$, given as
\begin{equation}\label{SE2Act}g(\theta,a,b)\cdot \left(\begin{array}{c} x\\y\end{array}\right) 
= \left(\begin{array}{cc} \cos \theta & -\sin \theta \\ \sin \theta & \cos \theta\end{array}\right) 
\left(\begin{array}{c} x\\y\end{array}\right)  +  \left(\begin{array}{c} a\\b\end{array}\right).\end{equation}
The infinitesimal vector fields are
\[ X(v_{\theta})= -y\partial_x+x\partial_y,\qquad X(v_a)=\partial_x,\qquad X(v_b)=\partial_y.\]
A map $\chi:M\rightarrow \mathfrak{se}(2)$, is then
$\chi(x,y)=\chi^1(x,y) v_{\theta}+\chi^2(x,y)v_a + \chi^3(x,y)v_b$.

We have that $$\rho(\chi) = \chi^1(x,y) \left(x \partial_y - y \partial_x\right) + \chi^2(x,y) \partial_x + \chi^3(x,y)\partial_y$$
so that 
$$\begin{array}{rcl} \llbracket \chi,\eta\rrbracket &=& \left(\chi^1\left(x \partial_y - y \partial_x\right) + \chi^2 \partial_x + \chi^3\partial_y\right)\eta \\
&&\quad - \left(\eta^1\left(x \partial_y - y \partial_x\right) + \eta^2 \partial_x + \eta^3\partial_y\right)\chi\\
&&\quad - \chi\eta+\eta\chi
\end{array}$$
where the operators act component-wise. It is straightforward to verify this bracket satisfies the Jacobi identity.

\end{excont}

\begin{excont}\label{RunEgleftSO3}
 We consider the standard action of $SO(3)$, the special orthogonal group in dimension 3, acting linearly on $\mathbb{R}^3$. 
 With respect to the standard coordinates, $(x,y,z)$ on $\mathbb{R}^3$, the infinitesimal 
 vector fields  can be taken to be
 \[ \begin{array}{rcl} X(v_{xy}) &=& x \partial_y - y \partial_x\\X(v_{yz}) &=& y\partial_z - z\partial_y\\
     X(v_{zx}) &=&  z\partial_x- x\partial_z 
    \end{array}\]
This implicitly defines three coordinates on $SO(3)$ near the identity, and with respect to these coordinates, the matching basis of
the Lie algebra $\mathfrak{so}(3)$ is 
\[ v_{xy} = \left(\begin{array}{ccc} 0&-1&0\\1&0&0\\0&0&0\end{array}\right),\qquad 
v_{yz} = \left(\begin{array}{ccc} 0&0&0\\0&0&-1\\0&1&0\end{array}\right),\qquad
 v_{zx} = \left(\begin{array}{ccc} 0&0&1\\0&0&0\\-1&0&0\end{array}\right)
 .\]
 A map $\chi:\mathbb{R}^3\rightarrow \mathfrak{so}(3)$ takes the form $\chi(x,y,z)=\chi^{xy}(x,y,z) v_{xy}+\chi^{yz}(x,y,z) v_{yz}
 +\chi^{zx}(x,y,z)v_{zx}$, and
 \[ \rho(\chi) = \chi^{xy} X(v_{xy})+\chi^{yz} X(v_{yz})+\chi^{zx} X(v_{zx}).\]
 Given two maps $\chi, \eta : \mathbb{R}^3\rightarrow \mathfrak{so}(3)$, then
 \[  \llbracket \chi, \eta \rrbracket = \rho(\chi)\eta - \rho(\eta)\chi - \chi\eta+\eta\chi \]
 defines a Lie bracket.  It is straightforward to verify this bracket satisfies the Jacobi identity. 
\end{excont}

\subsection{Poisson structures arising from Lie algebroids}

 We now outline the construction of a Poisson structure based on a result in Marle \cite{Marle}. The relation between the dual of vector bundles and Poisson structures can also be found in \cite{CW}, \cite{DZ} and \cite{Mac}.  {Because our approach is local and computational, we continue to restrict to the case of the trivial bundle, $M\times \g$.}
 
Consider the bundle $E^*=M\times \mathfrak{g}^*$ where $\mathfrak{g}^*$ is the dual of $\mathfrak{g}$. 
For each smooth section $x\in C^\infty(M,\g)$ we associate the smooth function
\[ \varphi_x~:~ E^*~\rightarrow~ \mathbb{R},\qquad
\varphi_x(z,\theta) =\theta(x(z)).\]
Now let $z=(z^1,\dots, z^p)$ be coordinates on $M$ and let $\xi=(\xi^1,\dots, \xi^r)$ be coordinates on $\mathfrak{g}^*$. 
\begin{defn}[Section and function associated to a function on $E^\ast$]
{Let $\eta: E^\ast \to \R$.  We say the 
section $x_{\eta}\in C^\infty(M,\g)$ and the function $f_{\eta}:M\rightarrow\mathbb{R}$  are \textit{associated to $\eta$ at the point $(z,\xi)$}, if
$$ d\eta = d(\varphi_{x} + f \circ \pi),$$
where $\pi$ is the projection on $E^\ast$.}
\end{defn}
Associated sections and functions are not unique and always exist. In applying this definition to our class of examples, the associated sections and functions can be taken to be constant and linear, respectively. The following result derives from applying a more general result, appearing in the
course of the proof of \cite{Marle} Theorem 5.3.2, to our class of Lie Algebroids. 
\begin{theorem}[Poisson structure on $E^*$] \label{PSmarle}
{Let $E$ be a vector bundle, a Lie algebroid with anchor map $\rho$ and Lie bracket $[\, ,\, ]$. There exists a Poisson bracket on $E^\ast$ such that
\[
\{\varphi_{x_1}, \varphi_{x_2}\} = \varphi_{[x_1, x_2]}.
\]
Furthermore, given two functions on $E^*$, say $\eta_1$ and $\eta_2$, let $x_1$, $f_1$ and $x_2$, $f_2$ be the  sections  and functions associated to $\eta_1$ and $\eta_2$ respectively. The Poisson bi-vector associated to $\{,
\}$ is given by
$$ \Lambda(z,\xi) (\eta_1,\eta_2)= \varphi_{[x_1,x_2]} (z,\xi)+\mathcal{L}_{\rho(x_1)}(f_2)(z) - \mathcal{L}_{\rho(x_2)}(f_1)(z).$$ }
\end{theorem}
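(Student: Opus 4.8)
\medskip

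\noindent\textbf{Proof proposal.}
The plan is to build the bracket on the two distinguished families of functions on $E^\ast = M\times\g^\ast$ and then extend. The first family consists of the fibrewise-linear functions $\varphi_x$, $x\in C^\infty(M,\g)$, the second of the pull-backs $f\circ\pi$, $f\in C^\infty(M)$; together their differentials span $T^\ast_{(z,\xi)}E^\ast$ at every point, which is exactly the statement that every function on $E^\ast$ admits an associated section and function at each point. On these generators I would prescribe
\[
\{\varphi_{x_1},\varphi_{x_2}\} := \varphi_{[x_1,x_2]},\qquad \{\varphi_x, f\circ\pi\} := (\mathcal{L}_{\rho(x)}f)\circ\pi,\qquad \{f_1\circ\pi, f_2\circ\pi\} := 0,
\]
each unambiguous since $x\mapsto\varphi_x$ and $f\mapsto f\circ\pi$ are injective. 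Requiring $\{\,,\,\}$ to be $\R$-bilinear and a derivation in each slot then forces its value on every fibrewise-polynomial function, where it acts by first-order differential operators in each argument; such a bi-differential operator has the form $\{F,G\} = \Lambda(dF, dG)$ for a unique skew bivector field $\Lambda$ on $E^\ast$, yielding a globally defined skew-symmetric bi-derivation satisfying the Leibniz identity automatically. Equivalently, one may define $\Lambda(z,\xi)(\eta_1,\eta_2)$ directly by the formula in the statement, reading $\eta_i$ through an associated pair $(x_i,f_i)$; the independence of this expression from the non-unique choice of $(x_i,f_i)$ is the one routine bookkeeping point at this stage, and follows from the fact that $\rho$ is a bundle map together with the Leibniz rule (\ref{defnAnchor}).

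The substance of the proof is the Jacobi identity, and this is where I expect the real work to lie. For any skew bracket obeying the Leibniz identity, the Jacobiator $J(F,G,H)=\{F,\{G,H\}\}+\{G,\{H,F\}\}+\{H,\{F,G\}\}$ is itself a derivation in each of its three arguments, hence vanishes on all of $C^\infty(E^\ast)$ as soon as it vanishes on triples drawn from the spanning generators; there are then three cases. For $J(\varphi_{x_1},\varphi_{x_2},\varphi_{x_3})$ one obtains $\varphi_{[[x_1,x_2],x_3]+[[x_2,x_3],x_1]+[[x_3,x_1],x_2]}=\varphi_0=0$ by the Jacobi identity of the Lie algebroid bracket. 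For $J(\varphi_{x_1},\varphi_{x_2}, f\circ\pi)$, expanding the nested brackets with the prescriptions above leaves $\big((\mathcal{L}_{\rho(x_1)}\mathcal{L}_{\rho(x_2)}-\mathcal{L}_{\rho(x_2)}\mathcal{L}_{\rho(x_1)}-\mathcal{L}_{\rho([x_1,x_2])})f\big)\circ\pi$, which vanishes precisely because $\rho$ is a Lie algebra homomorphism, $\rho([x_1,x_2])=[\rho(x_1),\rho(x_2)]$ (a standard property of Lie algebroids; cf.\ Remark \ref{RemRhoLieAlgHomSB}). The remaining cases $J(\varphi_x,f_1\circ\pi,f_2\circ\pi)$ and $J(f_1\circ\pi,f_2\circ\pi,f_3\circ\pi)$ collapse to $0$ immediately from $\{f\circ\pi,\,g\circ\pi\}=0$. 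This shows $\{\,,\,\}$ is a Poisson bracket, and $\{\varphi_{x_1},\varphi_{x_2}\}=\varphi_{[x_1,x_2]}$ holds by construction (the case $f_i=0$ of the formula).

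For the displayed bivector formula I would invoke the general fact that a Poisson bracket, being a bi-derivation, satisfies $\{F,G\}(p)=\Lambda(p)(dF_p,dG_p)$, so its value at $p=(z,\xi)$ depends on $F,G$ only through their differentials at $p$. Given $\eta_1,\eta_2$ with associated sections $x_i$ and functions $f_i$, the defining relation $d\eta_i=d(\varphi_{x_i}+f_i\circ\pi)$ at $p$ gives $\{\eta_1,\eta_2\}(p)=\{\varphi_{x_1}+f_1\circ\pi,\ \varphi_{x_2}+f_2\circ\pi\}(p)$; expanding by bilinearity and applying the three elementary brackets, the $\{f_1\circ\pi,f_2\circ\pi\}$ term drops out and one is left with $\varphi_{[x_1,x_2]}(z,\xi)+\mathcal{L}_{\rho(x_1)}(f_2)(z)-\mathcal{L}_{\rho(x_2)}(f_1)(z)$, as claimed. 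The main obstacle is thus the Jacobi verification and, more precisely, the justification that it reduces to a finite check on generators; the loose ends beyond it — the existence (always) and non-uniqueness of associated pairs $(x_\eta,f_\eta)$, and the passage from fibrewise-polynomial functions to a genuine bivector field — are routine and, for the trivial bundle $M\times\g$ in which we work, involve no global subtleties.
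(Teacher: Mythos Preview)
Your argument is correct and is the standard construction of the linear Poisson structure on the dual of a Lie algebroid: define the bracket on fibrewise-linear functions and basic functions, check that the Jacobiator is a tri-derivation so that it suffices to verify Jacobi on generators, and then read off the bivector from the elementary brackets. The three Jacobi cases you identify are exactly right, and the reductions to the Jacobi identity of the algebroid bracket and to the homomorphism property of $\rho$ are the correct mechanisms.

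By way of comparison with the paper: the paper does not give its own proof of this theorem. It states the result as a specialisation of a theorem of Marle (Theorem 5.3.2 in \cite{Marle}), remarks that $\Lambda$ is well-defined despite the non-uniqueness of associated pairs, and then proceeds directly to the computation in Example~\ref{mySL2projEx} to exhibit the structure matrix explicitly. So there is no internal argument to compare against; your proposal is in fact more complete than what the paper records. What you have written is essentially Marle's argument, and the only point one might flag as deserving a line or two more is the well-definedness of $\Lambda(z,\xi)(\eta_1,\eta_2)$ under change of associated pair: the verification uses that if $d(\varphi_y+h\circ\pi)=0$ at $(z,\xi)$ then $y(z)=0$ and $\xi\cdot dy(z)+dh(z)=0$, and then the Leibniz rule~(\ref{defnAnchor}) together with $\rho$ being $C^\infty(M)$-linear at the level of point values shows the three terms in the formula are unchanged. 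You allude to this correctly but a reader might appreciate seeing it spelled out.
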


Despite the non-uniqueness of the associated sections and functions, $\Lambda$ is uniquely defined. 

The calculation of the Poisson structure associated to any given Lie algebroid $E^\ast$ is straightforward, as we will now demonstrate.

\begin{excont*}\textbf{\ref{mySL2projEx} (continued). } 

{Assume the coordinates on $\mathfrak{g}^*$ are $\xi=(\xi^1, \xi^2, \xi^3)$.
Consider the functions $\eta_1$ and $\eta_2$ on $M\times\g^\ast$ such that} $x_i: M\rightarrow \mathfrak{g}$ are the constant maps, 
\[ x_i(u)=\left(\begin{array}{cc} \beta_i^1 & \beta_i^2\\ \beta_i^3 & - \beta_i^1\end{array}\right) \]
and the associated functions to be linear, $f_1(u)=\alpha_1 u$, $f_2(u)=\alpha_2 u$. 
Since the maps $x_i$ are constant, we have $\llbracket x_1,x_2\rrbracket ={-} [ x_1, x_2 ]$, the standard matrix bracket in $\mathfrak{sl}(2)$,
and 
\[ \begin{array}{rcl} \mathcal{L}_{\rho(x_1)}(f_2(u))&=&\alpha_2 \left( 2u\beta_1^1 + \beta_1^2 - u^2 \beta_1^3\right)\\
 \mathcal{L}_{\rho(x_2)}(f_1(u))&=&\alpha_1 \left( 2u\beta^1_2 + \beta^2_2 - u^2 \beta_2^3\right)\\[5pt]
 \varphi_{[x^1,x^2]}(u,\xi)&=&-\left(\left(\beta_1^1\beta_2^2 - \beta^2_1\beta_2^1\right) 2\xi^2 +\left(\beta_1^1\beta_2^3-\beta_1^3\beta^1_2\right)(-2\xi^3)\right.\\[5pt]
 &&\quad \left.+\left(\beta_1^2\beta_2^3 - \beta_1^3\beta_2^2\right)\xi^1\right)
 \end{array}
 \]
 Finally, we  have
 \[\begin{array}{rcl}
 \Lambda(u,\xi)(\eta_1,\eta_2)&={-}&\left(\begin{array}{cccc} \alpha_1&\beta_1^1& \beta_1^2&\beta_1^3\end{array}\right)
 \left(\begin{array}{cccc} 0& 2u&1&-u^2\\-2u & 0& 2\xi^2 & -2\xi^3\\-1&-2\xi^2 &0&\xi^1\\ \phantom{-}u^2&2\xi^3&-\xi^1&0\end{array}\right)
 \left(\begin{array}{c} \alpha_2\\\beta_2^1\\\beta_2^2\\ \beta_2^3\end{array}\right)\\ [15pt]
 &=&- \left(\begin{array}{cc} \alpha_1&\beta_1\end{array}\right) [ \Lambda]  \left(\begin{array}{cc} \alpha_2&\beta_2\end{array}\right)^T
 \end{array}
 \]
where this defines the skew-symmetric matrix, $[\Lambda]$.

{From this example, we see that the Poisson bracket on $E^*$} can be defined as follows: given functions $F, G:E^*\rightarrow \mathbb{R}$, we define
 \[ 
 \left\{ F,G\right\} = \left(\begin{array}{cccc} F_u& F_{\xi_1}& F_{\xi_2}& F_{\xi_3}\end{array}\right)
 [\Lambda]
 \left(\begin{array}{cccc} G_u & G_{\xi_1}  & G_{\xi_2} &  G_{\xi_3} \end{array}\right)^T
 \]
 Indeed, skew-symmetry and the Liebnitz rule are self-evident, while it is straightforward to check the Jacobi identity
 \[ \{ F \{ G,H\}\} + \{G,\{H,F\}\}+\{H,\{F,G\}\}=0.\]
\end{excont*}
Looking more closely at the above example, we see that the matrix $[\Lambda]$ has a striking structure, indeed, we have
\begin{equation}\label{newPBprojSL2}
[\Lambda]= \left(\begin{array}{c|ccc} 0& 2u&1&-u^2\\ \hline -2u & 0& 2\xi^2 & -2\xi^3\\-1&-2\xi^2 &0&\xi^1\\ \phantom{-}u^2&2\xi^3&-\xi^1&0\end{array}\right)
=
\left(\begin{array}{c|c} 0 & \Phi\\ \hline -\Phi^T &\phantom{\Big\vert} \Lambda({\mathfrak{sl}(2)^*)}\end{array}\right)
\end{equation}
where $\Phi$ is the matrix of infinitesimals, and $\Lambda(\mathfrak{g}^*)$ is the structure matrix for standard Lie Poisson bracket on $\mathfrak{g}^*$
in its matrix representation. 

The following result is immediate from Theorem \ref{PSmarle}. 
\begin{theorem}[Poisson structure on {$M\times \mathfrak{g}^*$}]\label{FinDimPB}
Let $G\times M\rightarrow M$ be a left action of the Lie group $G$ on the manifold $M$, with matrix of infinitesimals, $\Phi$. 
Suppose coordinates on $M$ are denoted as $z$ and coordinates on $\mathfrak{g}^*$ are denoted as $\xi$.
Let $\Lambda(\mathfrak{g}^*)$ be the {bi-vector for the Lie Poisson  bracket} 
on $\mathfrak{g}^*$, in its matrix representation deriving from the same coordinates on $G$ as used to obtain $\Phi$. Then
\begin{equation}\label{FinDimPBleft}
\{F, H\} = \left(\begin{array}{cc} \nabla_z F \,{}^T & \nabla_{\xi} F\,{}^T\end{array}\right)\left(\begin{array}{cc} 0 & \Phi\\  -\Phi^T & \Lambda({\mathfrak{g}^*)}\end{array}\right)
\left(\begin{array}{c} \nabla_z H \\ \nabla_{\xi} H\end{array}\right)
\end{equation}
is a Poisson bracket on $\mathcal{C}^{\infty}(M\times \mathfrak{g}^*,\mathbb{R})$.
If the action is a right action, then \begin{equation}\label{FinDimPBright}
\{F, H\} = \left(\begin{array}{cc} \nabla_z F\,{}^T & \nabla_{\xi} F\,{}^T\end{array}\right)\left(\begin{array}{cc} 0 & -\Phi\\  \Phi^T & \Lambda({\mathfrak{g}^*)}\end{array}\right)
\left(\begin{array}{c} \nabla_z H \\ \nabla_{\xi} H\end{array}\right)
\end{equation}
is a Poisson bracket on {$M\times \mathfrak{g}^*$}.
\end{theorem}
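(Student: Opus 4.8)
The plan is to deduce Theorem~\ref{FinDimPB} as a direct specialization of Theorem~\ref{PSmarle}, simply by carrying out, in coordinates, the computation illustrated in the $SL(2)$ example and reading off the structure matrix. First I would fix coordinates $z=(z^1,\dots,z^p)$ on $M$ and $\xi=(\xi^1,\dots,\xi^r)$ on $\g^\ast$ associated to a chosen basis $v_1,\dots,v_r$ of $\g$ obtained from coordinates near the identity of $G$, and I would take the Lie algebroid $E=M\times\g$ equipped with the \emph{second} Lie bracket $\llbracket\,,\,\rrbracket$ and anchor $\rho$ of Equations~(\ref{leftddblbracketDef}) and~(\ref{rhoxdef}); the fact that this is a Lie algebroid has already been established, so Theorem~\ref{PSmarle} applies and produces a Poisson bracket on $E^\ast=M\times\g^\ast$.

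The computational core is to evaluate the bi-vector $\Lambda(z,\xi)(\eta_1,\eta_2)=\varphi_{\llbracket x_1,x_2\rrbracket}(z,\xi)+\mathcal{L}_{\rho(x_1)}(f_2)(z)-\mathcal{L}_{\rho(x_2)}(f_1)(z)$ on pairs of coordinate functions. As noted after the definition of associated sections, for our examples the associated sections $x_i$ may be taken constant and the associated functions $f_i$ linear; concretely, to the coordinate function $z^k$ one associates $x=0$, $f=z^k$, and to $\xi^k$ one associates the constant section $x=v_k$, $f=0$. I would then compute the three brackets $\{z^i,z^j\}$, $\{z^i,\xi^j\}$, $\{\xi^i,\xi^j\}$. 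For two constant sections $x_1=\sum a^i v_i$, $x_2=\sum b^j v_j$ the Lie-derivative terms vanish and $\llbracket x_1,x_2\rrbracket=-[x_1,x_2]$ is the pointwise bracket; but with the sign convention chosen in the matrix representation (recall $v\mapsto -X(v)$ and the remark that the trivial action recovers the negative of the pointwise bracket), this reproduces exactly the Lie--Poisson structure matrix $\Lambda(\g^\ast)$ in its matrix representation, giving $\{\xi^i,\xi^j\}=\Lambda(\g^\ast)_{ij}$. For $\{z^i,z^j\}$ both sections are zero, so $\Lambda=0$, giving the top-left block. For the mixed term $\{z^i,\xi^j\}$ one has $x_1=0$, $f_1=z^i$, $x_2=v_j$, $f_2=0$, so only $-\mathcal{L}_{\rho(x_2)}(f_1)=-\rho(v_j)(z^i)=-X_j(z^i)=-\Phi_{ij}$ survives (using Equation~(\ref{PhiInfVFdef})), which assembles into the off-diagonal blocks $\Phi$ and $-\Phi^\top$. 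Writing these entries into the structure matrix $\Lambda_{ij}=\{w^i,w^j\}$ with $w=(z,\xi)$, and expressing a general bracket as $\{F,H\}=(\nabla F)^\top\Lambda(\nabla H)$, yields Equation~(\ref{FinDimPBleft}).

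For the right-action case I would invoke the remark following the definition of the second Lie bracket: a right action becomes a left action via $g\cdot z=g^{-1}\cdot_R z$, which flips the sign of $\Phi$, and Equation~(\ref{rightddblbracketDef}) differs from~(\ref{leftddblbracketDef}) precisely by the signs of the Lie-derivative terms; tracking these signs through the mixed-block computation turns $\Phi$ into $-\Phi$ and gives Equation~(\ref{FinDimPBright}), while the $\xi\xi$-block is unaffected since the constant-section computation involves no Lie derivatives. That the resulting $\{\,,\,\}$ is genuinely a Poisson bracket — bilinearity, skew-symmetry, Leibniz, and the Jacobi identity — is guaranteed by Theorem~\ref{PSmarle} itself, so nothing new need be checked there.

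I do not expect any serious obstacle: the only real content is bookkeeping of the several sign conventions (the $v\mapsto -X(v)$ identification used for the pointwise bracket in the matrix representation, the left-versus-right convention for $\rho$, and the sign in front of $[x,y]$ in~(\ref{leftddblbracketDef})). The one point that warrants care, and which I would state explicitly, is \emph{why} the $\{\xi^i,\xi^j\}$ block is the Lie--Poisson matrix in the \emph{matrix} representation rather than in the infinitesimal-vector-field representation — this is exactly the compatibility between the sign choice in the second bracket and the choice, flagged in the earlier remark, to compute the pointwise bracket using a faithful matrix representation. Once that is pinned down, the block form~(\ref{newPBprojSL2}) observed in the $SL(2)$ example is seen to hold verbatim in general, and the theorem follows.
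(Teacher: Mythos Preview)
Your approach is essentially the paper's: the paper states that Theorem~\ref{FinDimPB} is ``immediate from Theorem~\ref{PSmarle}'', and your proposal simply fleshes out that immediacy by evaluating the Marle bi-vector on coordinate functions using constant sections for the $\xi^j$ and linear functions for the $z^i$, exactly as in the worked $SL(2)$ example preceding the theorem. One small slip to fix: in your mixed-block computation you obtain $-\mathcal{L}_{\rho(v_j)}(z^i)=-\Phi_{ij}$ but then assert this assembles to the $+\Phi$ block of~(\ref{FinDimPBleft}); the paper's example has the same overall sign between the bi-vector $\Lambda(z,\xi)(\eta_1,\eta_2)$ and the structure matrix $[\Lambda]$ (note the ``$=-$'' in the displayed computation), so when you track signs carefully you should find that the bracket $\{F,G\}$ and the bi-vector differ by an overall sign, which is what reconciles your $-\Phi_{ij}$ with the stated $+\Phi$ block --- exactly the bookkeeping you already flag as the one point of care.
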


\begin{remark} In the sequel, we denote the Poisson structure matrices in either case as $[\Lambda]$.\end{remark}

The set of examples covered by this theorem include both the canonical Darboux and the Lie Poisson brackets.
If the Lie group action is trivial, that is, $g\cdot z = z$ for all $g\in G$, then the matrix of infinitesimals is identically zero, and we have only the Lie Poisson structure on $\mathfrak{g}^*$.
Meanwhile, if the Lie group action consists of translations, then we recover the standard Darboux structure, as in the following Corollary.
\begin{corollary}
If $G=\mathbb{R}^r=M$, and the action is $(\epsilon,z)\mapsto z+\epsilon$, or in coordinates,
 \[(\epsilon_1,\dots, \epsilon_r)\cdot (z_1,\dots, z_r)=(\epsilon_1+z_1,\dots, \epsilon_r+z_r)\]
then 
\[ [\Lambda]=\left(\begin{array}{cc} 0 & I_r\\ -I_r & 0\end{array}\right)\]
where $I_r$ is the $r\times r$ identity matrix.
\end{corollary}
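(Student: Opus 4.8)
The plan is to specialize Theorem~\ref{FinDimPB} to the translation action and read off the matrix of infinitesimals directly from Equation~(\ref{InfGensFromLieAlgBasis}). First I would compute the infinitesimal vector fields. With $G=\mathbb{R}^r$ and coordinates $a=(a_1,\dots,a_r)$ near the identity $e=(0,\dots,0)$, the exponential is $\exp(t v_k)=(0,\dots,t,\dots,0)$ with $t$ in the $k$-th slot, so $\exp(tv_k)\cdot z = (z_1,\dots,z_k+t,\dots,z_r)$. Hence by (\ref{InfGensFromLieAlgBasis}),
\[
X_k=\sum_{\ell}\left(\frac{{\rm d}}{{\rm d}t}\Big\vert_{t=0}(z_\ell+t\,\delta_{k\ell})\right)\partial_{z^\ell}=\partial_{z^k},
\]
so by (\ref{PhiInfVFdef}) the matrix of infinitesimals is $\Phi=I_r$, the $r\times r$ identity. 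Here $p=r$, so $\Phi$ is indeed square.

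Next I would identify the Lie Poisson block. Since $\mathbb{R}^r$ is abelian, its Lie algebra $\mathfrak{g}$ carries the trivial bracket: all structure constants $c_{ij}^k$ vanish. By (\ref{dualdualbr}), the Lie Poisson structure matrix satisfies $\Lambda(\mathfrak{g}^*)_{ij}=\{\xi_i,\xi_j\}=\sum_k c_{ij}^k\xi_k=0$, so $\Lambda(\mathfrak{g}^*)$ is the zero $r\times r$ matrix. Strictly, $\mathbb{R}^r$ is not a matrix group, but the construction of Theorem~\ref{FinDimPB} only requires a faithful representation and the associated coordinates; using the standard embedding of $(\mathbb{R}^r,+)$ into upper-triangular matrices (or simply working with the abstract Lie algebra, whose bracket table is zero in any case) gives the same conclusion, since the Lie Poisson bracket depends only on the structure constants.

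Finally I would assemble the blocks. Substituting $\Phi=I_r$ and $\Lambda(\mathfrak{g}^*)=0$ into (\ref{FinDimPBleft}) gives the Poisson structure matrix
\[
[\Lambda]=\left(\begin{array}{cc} 0 & \Phi\\ -\Phi^T & \Lambda(\mathfrak{g}^*)\end{array}\right)=\left(\begin{array}{cc} 0 & I_r\\ -I_r & 0\end{array}\right),
\]
which is exactly the claimed Darboux form; that this is a Poisson bracket is already guaranteed by Theorem~\ref{FinDimPB}. I do not expect a genuine obstacle here: the only point needing a word of care is the non-matrix nature of $(\mathbb{R}^r,+)$, which is handled as above by noting that the relevant data (the infinitesimals $\Phi$ and the structure constants entering $\Lambda(\mathfrak{g}^*)$) are representation-independent, so the hypotheses of Theorem~\ref{FinDimPB} apply without modification.
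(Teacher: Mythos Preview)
Your proof is correct and follows exactly the same approach as the paper: compute $\Phi=I_r$ from the translation action and observe that $\Lambda(\mathfrak{g}^*)=0$ because $\mathbb{R}^r$ is abelian, then assemble the block matrix. The paper's own proof is a one-line version of what you wrote; your extra care about the non-matrix nature of $(\mathbb{R}^r,+)$ is harmless but unnecessary, since Theorem~\ref{FinDimPB} only uses $\Phi$ and the structure constants.
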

\begin{proof}
For this action, $\Phi$ is the identity matrix, while the Lie Poisson matrix is zero, as the Lie algebra is abelian.
\end{proof}

{
We now show that the Poisson structure given by Theorem \ref{FinDimPB} obtained from linear actions of matrix Lie groups are isomorphic to the Lie Poisson brackets of their semi-direct products.  We begin with an example.
\begin{excont} It is simpler to show the result beginning with the contragredient linear action of $G$, and to use the standard matrix representation of the
semi-direct product. 
Hence we take the action of $SL(2)$ on $M=\mathbb{R}^2$ to be,
\[ (g, (x,y)^T)\mapsto  g^{-T}(x,y)^T\]
so that the matrix of infinitesimals for the given action is
\[ \Phi = \bordermatrix{ & a&b&c\cr x& -x&0&-y\cr y & y&-x&0\cr}.\]
Then the Poisson structure given by Theorem (\ref{FinDimPB}) is
\begin{equation}\label{PBslInvTrans} \Lambda=\left(\begin{array}{ccccc} 0&0& -x&0&-y\\ 0&0& y&-x&0\\ x&y& 0& 2\xi^2&-2\xi^3\\ 0&x&-2\xi^2&0&\xi^1\\ y&0&2\xi^3&-\xi^1&0\end{array}\right).\end{equation}
We now compare this with the standard Lie Poisson matrix for the Lie algebra of $SL(2)\ltimes \mathbb{R}^2$. We use the standard matrix representation
for this Lie algebra, given by
\[ \mathbf{w}_1=\left(\begin{array}{cc} 0 & {\rm e}_1\\0&0\end{array}\right), \mathbf{w}_2=\left(\begin{array}{cc} 0 & {\rm e}_2\\0&0\end{array}\right), 
\bar{{v}}_a = \left(\begin{array}{cc} {{v}}_a & 0\\0&0\end{array}\right),\bar{{v}}_b = \left(\begin{array}{cc} {{v}}_b & 0\\0&0\end{array}\right),
\bar{{v}}_c = \left(\begin{array}{cc} {{v}}_c & 0\\0&0\end{array}\right)\]
where ${\rm e}_1=(1,0)^T$, ${\rm e}_2=(0,1)^T$, and $v_a$, $v_b$ and $v_c$ are given in Equation (\ref{sl2LAdef}).
The Lie bracket table with respect to this basis is
\[ \begin{array}{c|ccccc}
\phantom{.} [\, , \,] & w_1 & w_2 & \bar{v}_a & \bar{v}_b & \bar{v}_c\\ \hline
w_1 & 0&0& -w_1&0&-w_2\\
w_2 & 0&0& w_2& -w_1 &0\\
\bar{v}_a & w_1&-w_2&0&2\bar{v}_b & -2\bar{v}_c\\
\bar{v}_b & 0&w_1& -2 \bar{v}_b&0&\bar{v}_a\\
\bar{v}_c & w_2 & 0& 2\bar{v}_c & -\bar{v}_a&0
\end{array}\]
It can be seen that if the coefficient functions corresponding to  $\bar{v}_a$, $\bar{v}_b$ and $\bar{v}_c$ are $\xi^1$, $\xi^2$ and $\xi^3$
and those corresponding to $w_1$ and $w_2$ are $x$ and $y$ respectively, then the Poisson structure matrix for this Lie algebra will be identical to 
that in (\ref{PBslInvTrans}).  \end{excont}}

{\begin{rem} We note that in both the example and in the proof which follows, we could have started with the action on $\mathbb{R}^n$ to 
be $(g, v)\mapsto  gv$ and then shown
that the Poisson structure obtained was the same as that for Lie Poisson bracket using the contragredient representation of the semi-direct product. \end{rem}}

{
\begin{lem}\label{SemiDirProdPB} If $G\subset GL(n,\mathbb{R})$ is a Lie group, then the Poisson structure given by Theorem (\ref{FinDimPB})
for the action \[ G\times \mathbb{R}^n\rightarrow \mathbb{R}^n,\qquad (g,v)\mapsto g^{-T}v\]
is isomorphic to the Lie Poisson structure on the dual of the Lie algebra of $G\ltimes \mathbb{R}^n$.
\end{lem}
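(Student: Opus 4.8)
The plan is to compute both Poisson structure matrices explicitly, in matching linear coordinates, and check that they agree; the identity map in those coordinates is then the desired isomorphism. Fix the basis $v_1,\dots,v_r$ of $\g$ coming from the chosen coordinates on $G$, with $[v_i,v_j]=\sum_k c_{ij}^k v_k$, and write $z=(z^1,\dots,z^n)$ for the standard coordinates on $M=\R^n$. First I would determine the matrix of infinitesimals $\Phi$ of the action $(g,v)\mapsto g^{-T}v$. Using $\exp(tv_k)^{-T}=\exp(-tv_k^{T})$ and differentiating at $t=0$ gives $X(v_k)(z)=-v_k^{T}z$, so by (\ref{PhiInfVFdef}) one reads off $\Phi_{\ell k}=-(v_k^{T}z)^{\ell}=-\sum_m (v_k)_{m\ell}\,z^m$. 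By Theorem \ref{FinDimPB} (left action) the Poisson structure matrix on $M\times\g^*$ in the coordinates $(z,\xi)$, with $\xi=(\xi^1,\dots,\xi^r)$ associated to $v_1,\dots,v_r$, is then
\[
[\Lambda]=\begin{pmatrix} 0 & \Phi\\ -\Phi^{T} & \Lambda(\g^*)\end{pmatrix},
\]
where $\Lambda(\g^*)$ is the Lie Poisson matrix of $\g$ in the $\xi$-coordinates.

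Next I would write the Lie algebra $\mathfrak{h}$ of $G\ltimes\R^n$ in its standard matrix model, with elements $\left(\begin{smallmatrix} A & w\\ 0 & 0\end{smallmatrix}\right)$, $A\in\g$, $w\in\R^n$, and take the basis $\bar v_i=\left(\begin{smallmatrix} v_i & 0\\ 0 & 0\end{smallmatrix}\right)$, $i=1,\dots,r$, together with $w_m=\left(\begin{smallmatrix} 0 & e_m\\ 0 & 0\end{smallmatrix}\right)$, $m=1,\dots,n$, where $e_m$ runs over the standard basis of $\R^n$. A one-line matrix multiplication gives
\[
[\bar v_i,\bar v_j]=\sum_k c_{ij}^k\bar v_k,\qquad [\bar v_i,w_m]=\sum_\ell (v_i)_{\ell m}\,w_\ell,\qquad [w_m,w_{m'}]=0 .
\]
Since the $\bar v_i$ descend from the same coordinates on $G$, now regarded inside $G\ltimes\R^n$, the compatibility hypothesis of Theorem \ref{FinDimPB} on using ``the same coordinates on $G$'' is met automatically.

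I would then coordinatise $\mathfrak{h}^*\cong\g^*\times(\R^n)^*$ by taking $\xi^1,\dots,\xi^r$ associated to $\bar v_1,\dots,\bar v_r$ and $z^1,\dots,z^n$ associated to $w_1,\dots,w_n$; this last step is where the standard identification $M=\R^n\cong(\R^n)^*$ enters. Feeding the bracket table above into (\ref{dualdualbr}), the Lie Poisson matrix $\Lambda(\mathfrak{h}^*)$ in these coordinates has $\g$-block $\{\xi^i,\xi^j\}=\sum_k c_{ij}^k\xi^k=\Lambda(\g^*)$, zero $\R^n$-block $\{z^m,z^{m'}\}=0$, and mixed entries $\{\xi^i,z^m\}=\sum_\ell (v_i)_{\ell m}z^\ell=-\Phi_{mi}$, i.e.\ mixed blocks $\Phi$ and $-\Phi^{T}$ exactly as in $[\Lambda]$. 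Hence $\Lambda(\mathfrak{h}^*)=[\Lambda]$, so the identity map in these coordinates --- equivalently, the natural diffeomorphism $\R^n\times\g^*\to\mathfrak{h}^*$ --- is a Poisson isomorphism, which is the claim.

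The only genuine difficulty is bookkeeping: the argument threads through four conventions --- the action $g^{-T}$ on $M$, its infinitesimal generator $v\mapsto -v^{T}$, the commutator in the matrix model of $\g\ltimes\R^n$, and the dual-coordinate normalisation (\ref{dualdualbr}) of the Lie Poisson bracket --- and every sign and transpose has to line up. Using the contragredient action $g^{-T}$ (rather than $g\cdot v=gv$) is precisely what makes that alignment clean, as the preceding remark anticipates; starting instead from $gv$ one would obtain the Lie Poisson structure of the contragredient semidirect product. There is no analytic content beyond the explicit computations, and the general case $G\subset GL(n,\R)$ goes through verbatim as in the worked $SL(2)$ example.
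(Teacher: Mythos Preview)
Your proof is correct and follows essentially the same approach as the paper: compute $\Phi$ for the contragredient action, write down the Poisson matrix from Theorem \ref{FinDimPB}, compute the bracket table of $\mathfrak{h}=\g\ltimes\R^n$ in its standard $(n{+}1)\times(n{+}1)$ matrix model with basis $\bar v_i,w_m$, and identify the resulting Lie--Poisson matrix on $\mathfrak{h}^*$ entrywise with $[\Lambda]$ under the coordinate assignment $\xi^i\leftrightarrow\bar v_i$, $z^m\leftrightarrow w_m$. Your sign and transpose bookkeeping matches the paper's, and your extra remarks on the identification $\R^n\cong(\R^n)^*$ and on what happens if one starts from $gv$ instead of $g^{-T}v$ simply make explicit points the paper leaves implicit or records in the preceding remark.
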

\begin{proof}
Let $(z^1,\cdots, z^n)^T$ be coordinates on $\mathbb{R}^n$
and let $\mathbf{v}_1$,\dots $\mathbf{v}_r$ be a basis of the Lie algebra $\g$ of $G$. Then, since we take the contragredient action, the matrix of infinitesimals is the $n\times r$ matrix
\[ \Phi(z) = \left( -\mathbf{v}_1^T (z^1,\cdots, z^n)^T,  \cdots,  -\mathbf{v}_r^T (z^1,\cdots, z^n)^T \right)\] and the Poisson structure matrix arising from this action is then
\begin{equation}\label{leftmatEg} \Lambda =  \left( \begin{array}{cc} 0 & \Phi(z) \\ -\Phi(z)^T & \Lambda(\g^*) \end{array}\right),\end{equation}
Let us denote the semi-direct product as $H=G\ltimes \mathbb{R}$, with $ (g,z)\ltimes (h,w) = (gh, gw+z)$. The Lie Algebra $\mathfrak{h}$ of $H$ has
a representation in $\mathfrak{gl}(n+1)$
 with basis
\[ \left\{ \mathbf{w}_j=\left(\begin{array}{cc} 0 & {\rm e}_j \\ 0&0\end{array}\right), \bar{\mathbf{v}}_i=\left(\begin{array}{cc} \mathbf{v}_i & 0 \\ 0&0\end{array}\right) \, | \, i=1,\dots r, j=1,\dots,n\right\}\]
where ${\rm e}_j=(0,\dots,0,1,0,\dots 0)^T$ where the non-zero component is in the $j$th place. We have  that $[\mathbf{w}_i, \mathbf{w}_j]=0$ while
\[ [ \mathbf{w}_j, \bar{\mathbf{v}}_{i}] = - \sum_k (\mathbf{v}_i)_{kj}\mathbf{w}_k=   \sum_k \left( -\mathbf{v}_i^T\right)_{jk} \mathbf{w}_k.\]
Further, if $ [ \mathbf{v}_i, \mathbf{v}_j]=\sum_{\ell} c^{\ell}_{ij}\mathbf{v}_{\ell}$ then $ [ \bar{\mathbf{v}}_i, \bar{\mathbf{v}}_j]=\sum_{\ell} c^{\ell}_{ij}\bar{\mathbf{v}}_{\ell}$. 
If we take the coefficient function corresponding to $\bar{\mathbf{v}}_i$ to be $\xi^i$, $i=1,\dots r$ and the coefficient function corresponding to $\mathbf{w}_j$ to be $z^j$, $j=1,\dots n$ then the Lie-Poisson bracket on $\mathfrak{h}^*$ will have for its structure matrix, that of Equation (\ref{leftmatEg}).
\end{proof}
}

Further examples of Theorem \ref{FinDimPB} will be given in \S\ref{ssec:assHam}.

\subsection{A canonical group action}
We next describe an  action of $G$ on $M\times \mathfrak{g}^*$   
which is canonical for the Poisson brackets in Theorem \ref{FinDimPB}, {that is, they define diffeomorphisms that preserve the Poisson bracket}.

We first recall our remarks concerning the Adjoint action from \S \ref{SecBasic}.
The Adjoint action of $G$ on $\mathfrak{g}$, for matrix groups and its associated matrix Lie algebra, is given by conjugations
\[ G\times \mathfrak{g}\rightarrow \mathfrak{g},\qquad (g, v)\mapsto \Ad(g)(x)=gvg^{-1},\]
and this induces the map on the associated basis $\{ \xi_1, \xi_2, \dots, \xi_r\}$ of $\g^{**}$,
given by 
\[ \boldsymbol{\xi} \mapsto \widetilde{\boldsymbol{\xi}}=\boldsymbol{\xi}\Am(g)\] 
where $\boldsymbol{\xi}=(\xi_1,\xi_2, \dots, \xi_r)$. Further, recall the Lie--Poisson matrix $ \Lambda(\mathfrak{g}^*)$ depends on  $\boldsymbol{\xi}$, specifically,
$ \Lambda(\mathfrak{g}^*)_{ij}=\sum c_{ij}^k \xi_k$. Writing $ \Lambda(\mathfrak{g}^*) = 
 \Lambda(\mathfrak{g}^*)(\boldsymbol{\xi})$ to make clear this dependence, we have
\begin{equation}\label{AdOnXiRecall} 
\Lambda(\mathfrak{g}^*) (\widetilde{\boldsymbol{\xi}})= 
\Am(g)^T\Lambda(\mathfrak{g}^*)(\boldsymbol{\xi})\Am(g)\end{equation}
by Proposition \ref{prop3things}.

\begin{theorem}\label{FinDimAdcanon}

Suppose coordinates on $M$ are denoted as $z$, so that the group action is written as $z\mapsto g\cdot z$, and coordinates on $\mathfrak{g}^*$ are denoted as $\boldsymbol{\xi}$.
Then the action of the Lie group on $M\times \mathfrak{g}^*$ given by
\[ G\times \left(M\times \mathfrak{g}^*\right) \rightarrow M\times\mathfrak{g}^*,\qquad g\cdot (z,\boldsymbol{\xi})
=\left(g^{-1}\cdot z, \boldsymbol{\xi}\Am(g)\right)\]
is canonical for the Poisson bracket (\ref{FinDimPBleft}). 
\end{theorem}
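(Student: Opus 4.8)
The plan is to verify canonicity directly at the level of Poisson structure matrices. Recall that for a manifold with Poisson structure matrix $[\Lambda]$, a diffeomorphism $\psi$ is canonical if and only if $\psi_*\,[\Lambda]\,\psi_*^T = [\Lambda]\circ\psi$, where $\psi_*$ denotes the Jacobian of $\psi$; equivalently, it suffices to check $\{F\circ\psi,H\circ\psi\}=\{F,H\}\circ\psi$ for $F,H$ ranging over the coordinate functions $z^1,\dots,z^p,\xi_1,\dots,\xi_r$. So the first step is to record that $\psi\colon (z,\boldsymbol{\xi})\mapsto (g^{-1}\cdot z,\ \boldsymbol{\xi}\,\Am(g))$ is a diffeomorphism, with inverse $(w,\boldsymbol{\eta})\mapsto (g\cdot w,\ \boldsymbol{\eta}\,\Am(g^{-1}))$, and that its Jacobian is block diagonal: the $M$-block is $N:=\dfrac{\partial(g^{-1}\cdot z)}{\partial z}$, and the $\mathfrak{g}^*$-block is $\Am(g)^T$, since the $\mathfrak{g}^*$-coordinates transform linearly by $\boldsymbol{\xi}\mapsto\boldsymbol{\xi}\Am(g)$.

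The second step is a block multiplication. Using the structure matrix from \eqref{FinDimPBleft}, a direct computation gives
\[
\psi_*\,[\Lambda]\,\psi_*^T=\begin{pmatrix} 0 & N\,\Phi(z)\,\Am(g)\\ -\Am(g)^T\Phi(z)^T N^T & \Am(g)^T\Lambda(\mathfrak{g}^*)(\boldsymbol{\xi})\,\Am(g)\end{pmatrix},
\]
while by definition
\[
[\Lambda]\circ\psi=\begin{pmatrix} 0 & \Phi(g^{-1}\cdot z)\\ -\Phi(g^{-1}\cdot z)^T & \Lambda(\mathfrak{g}^*)(\boldsymbol{\xi}\,\Am(g))\end{pmatrix}.
\]
The upper-left blocks agree trivially, and since both matrices are skew-symmetric the lower-left blocks are minus the transposes of the upper-right blocks on each side; hence only two identities remain to be checked.

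The third step invokes the two transformation laws already established in the excerpt. Equality of the lower-right blocks, $\Am(g)^T\Lambda(\mathfrak{g}^*)(\boldsymbol{\xi})\Am(g)=\Lambda(\mathfrak{g}^*)(\boldsymbol{\xi}\,\Am(g))$, is precisely \eqref{AdOnXiRecall}, i.e.\ Proposition~\ref{prop3things}(1). Equality of the upper-right blocks, $N\,\Phi(z)\,\Am(g)=\Phi(g^{-1}\cdot z)$, is just \eqref{PhiUltimateEqn} rearranged: that equation reads $N^{-1}\Phi(g^{-1}\cdot z)=\Phi(z)\Am(g)$, and left-multiplying by $N$ yields exactly what is needed. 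This completes the verification.

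I expect the only genuinely delicate point — and hence the step worth writing out carefully — to be bookkeeping: getting the transpose conventions for $\psi_*$ right, and, above all, confirming that \eqref{PhiUltimateEqn} is applied at the correct pair of base points. It is, precisely because $\psi$ carries the $M$-coordinate $z$ to $g^{-1}\cdot z$, and \eqref{PhiUltimateEqn} is exactly the compatibility between the values of $\Phi$ at $z$ and at $g^{-1}\cdot z$ together with the linear map $\Am(g)$. As an alternative check worth recording as a remark, one can argue infinitesimally: differentiating the action at $g=\exp(tv_i)$, the induced vector field on $M\times\mathfrak{g}^*$ has $M$-component $-X(v_i)$ (from the $g^{-1}$) and $\mathfrak{g}^*$-component governed by $-\Lambda(\mathfrak{g}^*)$ via Proposition~\ref{prop3things}(2); one then recognizes this as the Hamiltonian vector field of the linear function $-\xi_i=-\varphi_{v_i}$ for the bracket \eqref{FinDimPBleft}, so each one-parameter subgroup acts by Hamiltonian flows and is therefore canonical. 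The direct matrix computation is, however, preferable for the theorem as stated, since it treats all $g\in G$ at once, including elements outside the identity component.
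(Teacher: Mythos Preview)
Your proof is correct and follows essentially the same route as the paper's own argument: both reduce the claim to a block-matrix identity and then invoke \eqref{PhiUltimateEqn} for the off-diagonal block and \eqref{AdOnXiRecall} (Proposition~\ref{prop3things}(1)) for the $\Lambda(\mathfrak{g}^*)$ block. The only cosmetic difference is that the paper phrases the computation in terms of how the gradient operators transform (so the matrix $\mathcal{D}=(\psi_*)^{-T}$ appears and the identity is written as $\mathcal{D}^T\bigl([\Lambda]\circ\psi\bigr)\mathcal{D}=[\Lambda]$), whereas you work directly with the Jacobian $\psi_*$ and the equivalent rearrangement $\psi_*\,[\Lambda]\,\psi_*^T=[\Lambda]\circ\psi$; your added infinitesimal remark is a nice bonus not present in the paper.
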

\begin{proof}
Write $g^{-1}\cdot z = \widetilde{z}$.  Then by the chain rule,
\[ \nabla_z \mapsto \nabla_{\widetilde{z}}= \left(\frac{\partial \widetilde{z}}{\partial z}\right)^{-T}\nabla_z \]
and similarly, since the action on $\boldsymbol{\xi}$ is linear, 
\[\nabla_{\boldsymbol{\xi}}\mapsto \left(\Am(g)\right)^{-1}\nabla_{\boldsymbol{\xi}}.\] 
Denote by $\mathcal{D}$ the Jacobian matrix in
\[\left(\begin{array}{c} \nabla_z\\ \nabla_{\boldsymbol{\xi}}\end{array}\right)
\mapsto  
\left(\begin{array}{cc} \left(\frac{\partial \left(g^{-1}\cdot z\right)}{\partial z}\right)^{-T} &0\\ 0
& \Am(g)^{-1}\end{array}\right)   \left(\begin{array}{c} \nabla_z\\ \nabla_{\boldsymbol{\xi}}\end{array}\right).
\]
Next, we recall Equation (\ref{PhiUltimateEqn}), 
\[
\left(\frac{\partial \left(g^{-1}\cdot z\right)}{\partial z}\right)^{-1}\Phi(g^{-1}\cdot z)= \Phi(z)\Am(g)
\]
while $\Lambda(\mathfrak{g}^*)$ transforms as in Equation (\ref{AdOnXiRecall}).

It is then straightforward to check that
 \[ \mathcal{D}^T\left(\begin{array}{cc} 0 & \Phi(g^{-1}\cdot z)\\
 -\Phi(g^{-1}\cdot z) & \Lambda(\mathfrak{g^*})(\boldsymbol{\xi}\Am(g))\end{array}\right)\mathcal{D}
 = \left(\begin{array}{cc} 0 & \Phi(z)\\
 -\Phi(z) & \Lambda(\mathfrak{g^*})(\boldsymbol{\xi})\end{array}\right)
 \]
 as required.
 \end{proof}
 
 The result is straightforward to verify in the examples. 
 \begin{excont*} \textbf{\ref{mySL2projEx} (cont.)} We have for the inverse projective action, $u\mapsto (du-b)/(-cu+a)$ with $ad-bc=1$ that $\partial_u \mapsto (cu-a)^{-2}\partial_u$.
 It can be verified directly that for the Poisson structure matrix in (\ref{newPBprojSL2}),
 \[ [\Lambda] (u, \boldsymbol{\xi})= \left(\begin{array}{c|ccc} 0& 2u&1&-u^2\\ \hline -2u&0&2\xi^2 & -2\xi^3\\-1&-2\xi^2&0&\xi^1\\u^2&2\xi^3&-\xi^1&0\end{array}\right)\]
 we have that
 \[[\Lambda] (g^{-1}\cdot u, \boldsymbol{\xi}\Am(g))=\left(\begin{array}{c|c} (cu-a)^{-2} &0\\ \hline 0& \Am(g)\end{array}\right)^T[\Lambda] (u, \boldsymbol{\xi})
 \left(\begin{array}{c|c} (cu-a)^{-2} &0\\ \hline 0&  \Am(g)\end{array}\right)\]
 where $\Am(g)$ is given in (\ref{AdgTSL2}).
 \end{excont*}
 
{\begin{rem} Lemma \ref{SemiDirProdPB} shows that linear actions of a Lie group $G$ give rise to Lie Poisson brackets 
 associated with a semidirect product, $G\ltimes \mathbb{R}^n$ for some $n$. In this case, there is a larger group of canonical group actions given by
 Adjoint action of the full semidirect product, using Proposition \ref{prop3things}, rather than just that of the $G$ component. 
 \end{rem}}
 
 \section{Compatible Poisson structures}
 {Given that a Lie group $G$ can act on a manifold $M$ using different actions, a natural question to investigate would be to study the relation between different brackets coming from different actions.}
{
 \begin{defn} We say two Poisson brackets, $\{\, , \, \}_1$ and $\{\, , \, \}_2$ are compatible if 
 \[ \{\, , \, \} := \{\, , \, \}_1+\{\, , \, \}_2\]
 is also a Poisson bracket.
 \end{defn}
 \begin{rem} An equivalent definition is:  two Poisson brackets are compatible, if their convex linear combination is also a Poisson bracket. We will use both
 notions.\end{rem}
 }

One can describe the condition on the infinitesimal matrices that determines whether or not the brackets given by Theorem \ref{FinDimPB} are compatible.
\begin{prop}   Let $G$ act on $M$ using two actions $\cdot_1$ and $\cdot_2$, with infinitesimal matrices $\Phi^1$ and $\Phi^2$. Let $\Theta^i = \pm \Phi^i$, $i=1,2$ with the sign depending on the action being left or right. Then the two Poisson brackets defined by Theorem \ref{FinDimPB} using the two actions are compatible, whenever
 \begin{equation}\label{condpoisson}
\sum_{\ell} (\Theta^1-\Theta^2)_{j,{\ell}}\frac{\partial (\Theta^1-\Theta^2)_{i,m}}{\partial z_{\ell}}  - \sum_{\ell}  (\Theta^1-\Theta^2)_{i,{\ell}}\frac{\partial (\Theta^1-\Theta^2)_{j,m}}{\partial z_{\ell}}=0,
\end{equation}
for any $i,j,m$.  \end{prop}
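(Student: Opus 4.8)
The plan is to reduce the statement to checking the Jacobi identity for the sum $[\Lambda]_1+[\Lambda]_2$ and then to read off \eqref{condpoisson}. Skew-symmetry and the Leibniz rule hold for any bivector and are preserved under addition, so only the Jacobi identity is at stake; and since the Jacobiator $\mathrm{Jac}(F,G,H)=\{F,\{G,H\}\}+\{G,\{H,F\}\}+\{H,\{F,G\}\}$ of a bivector is a derivation in each of $F,G,H$, it suffices to test it on triples of the coordinate functions $z=(z_1,\dots,z_p)$ on $M$ and $\xi=(\xi_1,\dots,\xi_r)$ on $\g^\ast$. I would first write down the structure matrix of $[\Lambda]_1+[\Lambda]_2$: it has the block shape of Theorem~\ref{FinDimPB}, with off-diagonal block $\Theta^1+\Theta^2$ and, crucially, the \emph{doubled} Lie--Poisson block $2\,\Lambda(\g^\ast)$ (both brackets contribute the same $\Lambda(\g^\ast)$). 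It is convenient to package the columns of $\Theta^k$ as the vector fields $\theta^k_\alpha:=\sum_i(\Theta^k)_{i\alpha}\,\partial_{z_i}$ on $M$ (so $\theta^k_\alpha=\pm\rho_k(v_\alpha)$, the infinitesimal generator of the $k$-th action up to sign) and to write $\sigma_\alpha:=\theta^1_\alpha+\theta^2_\alpha$, $\delta_\alpha:=\theta^1_\alpha-\theta^2_\alpha$.

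Next I would dispose of the inert coordinate triples. Any triple containing two or more of the $z_i$ contributes nothing: $\{z_i,z_j\}=0$, and every off-diagonal entry $\{z_i,\xi_\alpha\}$ is a function of $z$ alone, so each bracket $\{z_i,\,\cdot\,\}$ occurring in the cyclic sum kills it. On the triple $(\xi_\alpha,\xi_\beta,\xi_\gamma)$ the $M$-coordinates never enter, so one is left with the Jacobi identity of $\g$ (rescaled), which holds. Hence the only potential obstruction is the triple $(z_i,\xi_\alpha,\xi_\beta)$. Evaluating it with $\{z_i,\xi_\gamma\}=(\Theta^1+\Theta^2)_{i\gamma}$ and $\{\xi_\alpha,\xi_\beta\}=2\sum_\gamma c^\gamma_{\alpha\beta}\xi_\gamma$, and recognising the two resulting kinds of term as the components of a Lie bracket of vector fields and of $\sum_\gamma c^\gamma_{\alpha\beta}\sigma_\gamma$, one finds that this Jacobiator equals the $i$-th component of
\[
 [\sigma_\alpha,\sigma_\beta] + 2\sum_\gamma c^\gamma_{\alpha\beta}\,\sigma_\gamma,
\]
where the factor $2$ is exactly the contribution of the doubled block.

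Now I would feed in what we already know. Running the same $(z_i,\xi_\alpha,\xi_\beta)$ computation for $[\Lambda]_k$ alone (undoubled block) and invoking that it is a Poisson bracket by Theorem~\ref{FinDimPB} gives the identity $[\theta^k_\alpha,\theta^k_\beta]=-\sum_\gamma c^\gamma_{\alpha\beta}\theta^k_\gamma$ for $k=1,2$ --- this is just the (anti)homomorphism property of the infinitesimal action recorded in \S\ref{SecBasic}, and it is uniform in the left/right case because $\Theta^k=\pm\Phi^k$ is inserted directly into the structure matrix. Expanding $[\sigma_\alpha,\sigma_\beta]=[\theta^1_\alpha+\theta^2_\alpha,\theta^1_\beta+\theta^2_\beta]$ and substituting this identity for the two diagonal commutators, the term $2\sum_\gamma c^\gamma_{\alpha\beta}\sigma_\gamma$ is absorbed and the whole expression collapses to $-[\,\theta^1_\alpha-\theta^2_\alpha,\ \theta^1_\beta-\theta^2_\beta\,]=-[\delta_\alpha,\delta_\beta]$. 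Thus $[\Lambda]_1+[\Lambda]_2$ satisfies the Jacobi identity if and only if $[\delta_\alpha,\delta_\beta]=0$ for all $\alpha,\beta$; writing this out in the coordinates $z$ (with $\Theta^k$ indexed as in \eqref{condpoisson}) is precisely \eqref{condpoisson}. In particular the condition is not only sufficient but necessary, and geometrically it says that the ``difference'' infinitesimal generators $\rho_1(v_\alpha)-\rho_2(v_\alpha)$ pairwise commute --- matching the heuristic that the two actions should differ infinitesimally by a translation.

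I expect the difficulty to lie entirely in the bookkeeping rather than in any idea. The three places to be careful are: keeping the factor of $2$ on the doubled Lie--Poisson block, without which the cancellation fails; correctly tracking which structure-matrix entries depend on $z$ and which on $\xi$ (the latter linearly, so $\partial_\xi$ of the $\g^\ast$-block is constant), which is what makes the inert triples inert; and expanding the two commutators $[\sigma_\alpha,\sigma_\beta]$ and $[\delta_\alpha,\delta_\beta]$ consistently against the identities $[\theta^k_\alpha,\theta^k_\beta]=-\sum_\gamma c^\gamma_{\alpha\beta}\theta^k_\gamma$. It is also worth spelling out once the standard fact that the Jacobiator of a bivector is a triple derivation, so that checking coordinate functions is legitimate, and reconciling the index labelling used in \eqref{condpoisson} with the $p\times r$ shape of $\Phi$ fixed earlier.
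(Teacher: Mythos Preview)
Your argument is correct and follows essentially the same approach as the paper: both reduce the question to the Jacobi identity for the combined structure matrix and exploit that each $[\Lambda]_k$ is already Poisson, so that the obstruction is precisely the cross terms, which rearrange into condition~\eqref{condpoisson}. The paper does this by citing the coordinate Jacobi condition (6.15) of \cite{Olveryellow} for block matrices of this shape and declaring the comparison of $\Theta^1$, $\Theta^2$ with their average a ``straightforward calculation'', whereas you carry out that calculation explicitly on coordinate triples and package it in the vector-field language $[\delta_\alpha,\delta_\beta]=0$; this is the same content, and your formulation has the pleasant feature of directly anticipating the commutativity statement in the theorem that immediately follows.
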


{\begin{proof}
The condition for a bracket defined by a matrix of the form (\ref{FinDimPBleft}) to be Poisson is well known and can be found, for example, in (6.15) at \cite{Olveryellow}. If $\Lambda(\g^\ast)= (\Lambda_{i,j})$ {is the Poisson bi-vector, and $\Theta = (\Theta^{i,j})$ is as in the statement, the bracket is Poisson if}
\begin{equation}\label{ponecond}
\left[\sum_r\frac{\partial \Lambda_{i,j}}{\partial \xi_r}\Theta^{r,m} - \sum_\ell \frac{\partial \Theta^{i,m}}{\partial z_\ell}\Theta^{j,\ell}+ \frac{\partial \Theta^{j,m}}{\partial z_\ell}\Theta^{i,\ell}\right]=0,
\end{equation}
for any $i,j,m$. To prove the proposition, we need to show that if we assume this equality to be satisfied by $\Theta^1$ and $\Theta^2$, then it will be satisfied, for example, by their average $\frac12(\Theta^1+\Theta^2$) - with the same values of $\Lambda_{i,j}$, whenever (\ref{condpoisson}) holds true. This is shown through a straightforward calculation.
\end{proof} 
In the next theorem we solve Equation (\ref{ponecond}) and classify the types of actions that give raise to compatible Poisson brackets.

\begin{theorem} {Assume we have two actions of a Lie group $G$ on a manifold $M$, and let $X^i:\g \to \mathfrak{X}(M)$ be the map associating to an element of $\g$ its infinitesimal generator as in Equation (\ref{lambdadef}), for the two actions $i=1,2$. Let $X$ be the difference vector field, namely $X = X^1-X^2:\g\to \mathfrak{X}(M)$. Then the Poisson brackets are compatible if, and only if $X(\g)$ is a commutative algebra and the actions differ by a translation.}
\end{theorem}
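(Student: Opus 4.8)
The plan is to reduce the statement to the criterion already isolated in the preceding Proposition and then to read that criterion off geometrically. The Proposition reduces compatibility of the two brackets to the single identity (\ref{condpoisson}) on the infinitesimal matrices $\Theta^1,\Theta^2$; the polarization computation in its proof is reversible, so (\ref{condpoisson}) is in fact equivalent to compatibility, and I would begin by recording this. The key simplification is that, by the linearity of $v\mapsto X(v)$ (Equation (\ref{AssIsLinear})), the matrix $\Theta^1-\Theta^2$ is exactly the matrix of infinitesimals of the difference map $X=X^1-X^2\colon\g\to\mathfrak{X}(M)$, and with that identification the left-hand side of (\ref{condpoisson}) is, term by term, the negative of the coordinate expansion of the vector-field bracket $[X(v_i),X(v_j)]$. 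Hence (\ref{condpoisson}) holds for all $i,j,m$ if and only if $[X(v_i),X(v_j)]=0$ for all $i,j$, equivalently $[X(v),X(w)]=0$ for all $v,w\in\g$; since the $X(v)$ then span a bracket-closed subspace on which the bracket vanishes, $X(\g)$ is a commutative Lie algebra. This already gives: compatibility $\iff$ the difference vector fields commute $\iff$ $X(\g)$ is commutative.

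It then remains to match ``the difference fields commute'' with ``the actions differ by a translation''. I would fix the meaning of the latter as: there are local coordinates $w=(w^1,\dots,w^p)$ on $M$ in which every $X^1(v)-X^2(v)$ has constant coefficients, i.e.\ is infinitesimally a translation, which is the sense used in the Introduction. The direction ``translation $\Rightarrow$ compatible'' is then immediate, since constant-coefficient fields commute (and $X(\g)$ is then automatically commutative, so the ``and'' in the statement is in this direction a restatement). The converse is the substantive part: from commutativity of the $X(v)$ one has an abelian Lie algebra of vector fields, but upgrading it to constant-coefficient form in one chart requires non-degeneracy. I would work, as the paper does elsewhere, on the open set where the relevant ranks are constant (in particular where a maximal $\R$-independent subfamily $X(v_{i_1}),\dots,X(v_{i_s})$ of the commuting family has constant rank), put $X(v_{i_\alpha})=\partial_{w^\alpha}$ by the simultaneous flow-box theorem for commuting fields, write each remaining $X(v_k)=\sum_{\alpha\le s}f_{k\alpha}\,\partial_{w^\alpha}$ (using $[\partial_{w^\alpha},X(v_k)]=0$ to see $f_{k\alpha}$ is independent of $w^1,\dots,w^s$), and finally use the anti-representation relations $[X^i(v),X^i(w)]=-X^i([v,w])$ for $i=1,2$ together with $[X(v_i),X(v_j)]=0$ to force the $f_{k\alpha}$ to be constant, so that every $X(v)$ is simultaneously constant-coefficient in the $w$-chart. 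This step, turning ``commuting'' into ``simultaneously constant-coefficient'' using only that $X^1,X^2$ come from genuine $G$-actions, is where the real work lies and is the one I expect to be the main obstacle; it is also the point at which the regularity hypotheses under which the theorem holds as stated must be made precise, and I would keep everything local, restricting to the open dense locus where the constant-rank assumptions hold.

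As a closing remark I would make the translation explicit: in the $w$-chart, $X^1(v)=X^2(v)+\tau(v)$ with $\tau(v)$ a constant vector, and one checks that $\tau$ is forced to satisfy the cocycle identity $\tau([v,w])+[X^2(v),\tau(w)]+[\tau(v),X^2(w)]=0$ that makes ``$\cdot_2$ twisted by the translation cocycle integrating $\tau$'' a genuine $G$-action coinciding with $\cdot_1$. This justifies the terminology and exhibits the link with Lemma \ref{SemiDirProdPB}: the configuration is the infinitesimal shadow of replacing an action of $G$ by an action of $G\ltimes\R^s$, precisely the setting in which the bracket of Theorem \ref{FinDimPB} is a semidirect-product Lie--Poisson bracket.
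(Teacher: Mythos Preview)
Your approach is essentially the paper's: reduce compatibility to condition~(\ref{condpoisson}) via the preceding Proposition, recognise the left-hand side as (the negative of) the coordinate expression for $[X_i,X_j]$ with $X_i=X^1_i-X^2_i$, straighten the commuting family by the simultaneous flow-box theorem, and read off ``differ by a translation''. The paper does exactly this, and then integrates one step further, writing $g\cdot_1 u=g\cdot_2 u+w$ with $w$ constant by differentiating the difference of the two actions.

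The one substantive difference is at the step you flag as the main obstacle. The paper simply asserts that in the flow-box chart $X_j=0$ for $j>\ell$ (where $\ell$ is the rank of $\{X_i\}$), and the remark immediately following the proof concedes that at non-generic points one can only conclude $X_j=\sum_{i\le\ell}a^i_j(u_{\ell+1},\dots,u_p)\,\partial_{u_i}$ and that nothing can then be said at the level of the group actions. You instead try to force the coefficients $f_{k\alpha}$ to be constant using the anti-representation identities for $X^1,X^2$; be aware that the paper does not carry out any such argument, so if you pursue that line you are adding to, not reproducing, the published proof. Your closing cocycle paragraph and the link to Lemma~\ref{SemiDirProdPB} are likewise extra commentary not present in the paper.
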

\begin{proof} For simplicity, let us assume that both actions are left actions (similar arguments work for the other combinations). {Let 
$\{v_j\,|\, j=1,\dots, r \}$ be the basis for $\g$ generating both infinitesimal matrices $\Phi^i$, and let $X^i_j = X^i(v_j)$ be the infinitesimal generator in the $v_j$ direction whose components define the matrices $\Phi^i$, $i=1,2$.} Formula (\ref{condpoisson}) can be described as 
\begin{equation}\label{commute}
\left[ (X_j^1- X_j^2)(\Theta^1-\Theta^2)_{i,m}  - (X^1_i- X^2_i)(\Theta^1-\Theta^2)_{j,m}\right]=0.
\end{equation}
{ Let $X = X^1-X^2$. Given that 
\[
X^i = X(v_i) = \sum_m (\Theta^1-\Theta^2)_{i,m} \frac\partial{\partial z_m},
\]
condition (\ref{commute}) is equivalent to
\[
[ X_j^1- X_j^2, X_i^1- X_i^2] = [X_i, X_j] = 0
\]
for any $i,j$. Thus, the Poisson brackets are compatible, if, and only if, the vector fields 
$\{X_i\}$ commute, as stated.}  

Assume that in a neighborhood of a point $z$ the rank of $\{X_i\}$ is constant and equal $\ell$, with dim$M=p$ and dim$G = r$. Without loss of generality, assume
$\{X_i\}_{i=1}^\ell$ are independent. Standard arguments in differential geometry tells us that there exists a coordinate system 
$\mathbf{u}$ around $z$ such that \[
X_i = \frac\partial{\partial u_i},\quad i=1,\dots, \ell, \quad\quad X_j = 0, \quad j=\ell+1,\dots,r.
\]
 From here we conclude that 
 \[
 X_i^1 = X_i^2 + \frac\partial{\partial u_i}, \quad i=1,\dots, \ell, \quad\quad X_i^1=X_i^2, \quad j=\ell+1,\dots,r.
 \]
{Therefore, locally around $z$, $g\cdot_1 u = g\cdot_2 u + w$, where $w$ is constant (it suffices to differentiate the difference between the two actions and prove that it is independent of $u$). Thus, the actions differ by {\it a translation} in the direction of $u_1,\dots, u_\ell$}.\end{proof}
 
 Notice that if the point $z\in M$ is a singularity where the rank of $\{X_i\}$ is less than $\ell$, then we can  conclude only that
 \[
 X_j = \sum_{i=1}^\ell a_j^i X_i, \quad j=\ell+1,\dots,p, \quad a_j^i = a_j^i(u_{\ell+1},\dots, u_p),
 \]
 so that
 \[
  X_j^1 = X_j^2 +\sum_{i=1}^\ell a_j^i X_i =  X_j^2 +\sum_{i=1}^r a_j^i (X_i^1-X_i^2), \quad j=\ell+1,\dots,p.
\]
If we define 
\[
\widehat{X}^s_j = X^s_j - \sum_{i=1}^\ell a_j^i X_i^s, \quad s=1,2, \quad j=\ell+1,\dots,p,
\]
then $\widehat{X}^1_j = \widehat{X}^2_j$, but these are not infinitesimal generators in general and we cannot conclude anything in terms of the group actions.

\begin{excont} A simple example of two compatible Poisson brackets in our class is given by the following
 two actions on the plane, so $M=\mathbb{R}^2$,
which differ by a translation action; 
 \[
 \begin{array}{rcl}
 g\cdot_1 x&=&\exp(\lambda)x + \mu y \\ g\cdot_1 y&=&y\end{array} \qquad 
 \begin{array}{rcl}  g\cdot_2 x&=&\exp(\lambda)x + \mu y+\epsilon\\   g\cdot_2 x&=&y+\delta\end{array}
\]
The matrix representation of the Lie group is
\[ G=\left\{ \left(\begin{array}{ccc} \exp(\lambda) & \mu &\epsilon\\0&1&\delta\\0&0&1\end{array}\right) , | \, \lambda, \mu, \epsilon, \delta \in \mathbb{R}\right\}.\]
If the Poisson structure matrices  for each action are denoted $[\Lambda_1]$ and $[\Lambda_2]$ respectively, then their convex linear sum  leads to a
one parameter family of Poisson structure matrices,
\[ \phantom{.} [\Lambda]=\left(\begin{array}{cccccc}
0&0& x& y&k&0\\
0&0&0&0&0&k\\
-x&0&0&\xi^2&\xi^3&0\\
-y&0&-\xi^2&0&0&\xi^3\\
-k&0&-\xi^3&0&0&0\\
0&-k&0&-\xi^3&0&0
\end{array}\right).
\]
It is readily checked that the Jacobi identity holds for this bracket, for all $k$.
\end{excont}

 \subsection{The resulting Hamiltonian systems}\label{ssec:assHam}
 We now consider Hamiltonian systems given by associated to the Poisson structure
 given by Theorem \ref{FinDimPB}. For a Hamiltonian function $H=H(z,\xi)$, we have the Hamiltonian system given by
 \begin{equation}\label{HamFlowEqs} \left(\begin{array}{c} \dot z \\ \dot \xi \end{array}\right) = \left(\begin{array}{c|c} 0 & \Phi \\ \hline -\Phi^T &\phantom{\Big\vert-} \Lambda({\mathfrak{g}^*}) \end{array}\right) \left(\begin{array}{c} H_z\\ H_{\xi}\end{array}\right).\end{equation}
 
  \begin{excont}  
We consider a nonlinear action of $SO(3)$ on $\mathbb{R}^2$, given in (\cite{Hydon}, Example 7.1).  If we set the coordinates on $\mathbb{R}^2$ to be $(x,y)$
 then the infinitesimal vector fields are given by
\[ \begin{array}{rcl}
X_1 &=& y\partial_x -x \partial_y\\
X_2&=&\textstyle\frac12\left(1+x^2-y^2\right)\partial_x + xy\partial_y\\
X_3&=&xy\partial_x + \textstyle\frac12\left(1-x^2+y^2\right)\partial_y.
\end{array}
\]

The Poisson structure matrix is
\begin{equation}\label{newSO3PB} \Lambda = \left(\begin{array}{ccccc}
0&0& y& \textstyle\frac12\left(1+x^2-y^2\right) & xy\\
0&0& -x & xy& \textstyle\frac12\left(1-x^2+y^2\right)\\
-y&x&0&-\xi^3 & \xi^2\\
-\textstyle\frac12\left(1+x^2-y^2\right)& -xy & \xi^3 &0&-\xi^1\\
-xy & -\textstyle\frac12\left(1-x^2+y^2\right)&-\xi^2&\xi^1&0
\end{array}\right)
\end{equation}
The Jacobi identity can be checked directly. In Figure \ref{newSO3HamPlot} we show orbits of the Hamiltonian system with
\[ H=\textstyle\frac15\left(x^2+y^2\right)+2(\xi^1)^2-(\xi^2)^2+3(\xi^3)^2\] and the Poisson structure $\Lambda$ given in (\ref{newSO3PB}), with the
unbroken curves, while the dashed line in the second plot is for $H=2\xi^1\xi^2-\left(\xi^3\right)^3$ and the Lie-Poison structure, $\Lambda(\mathfrak{so}(3)^*)$.

\begin{center}
\begin{figure}[h]
\caption{ For the Hamiltonian $H=\textstyle\frac15\left(x^2+y^2\right)+2(\xi^1)^2-(\xi^2)^2+3(\xi^3)^2$, the plots for  $(\dot z, \dot\xi)^T=\Lambda (\nabla_z H, \nabla_{\xi} H)^T$ with $\Lambda$ given in (\ref{newSO3PB}) the initial data $x(0)=y(0)=\xi_1(0)=\xi_2(0)=\xi_3(0)=1$ are shown. In (ii), the plot for the Lie Poisson system for $\mathfrak{so}(3)^*$ with
 $H=2(\xi^1)^2-(xi^2)^2+3(\xi^3)^2$, with the same initial data, is shown for comparison with the dashed line. \label{newSO3HamPlot}}
 \begin{minipage}{\textwidth}
 \begin{minipage}[b]{.5\textwidth}
\begin{center} \includegraphics[width=0.6\textwidth, trim=0 0 0 0, clip=true]{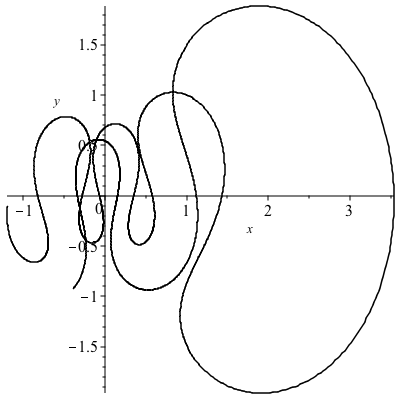} 
 
 (i) $t\mapsto (x(t),y(t))$\end{center}
 \end{minipage}
 \begin{minipage}[b]{.5\textwidth}
 \includegraphics[width=0.9\textwidth, trim=0 0 0 0, clip=true]{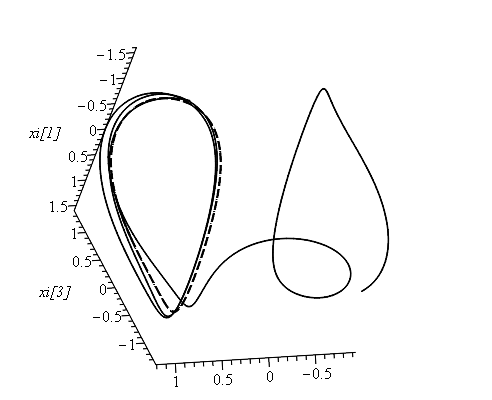}
 
 (ii) $t\mapsto (\xi^{1}(t),\xi^{2}(t),\xi^{3}(t))$ \end{minipage}
 \end{minipage}
 
 \end{figure}
 \end{center}
 \end{excont}

We can consider the Hamiltonian equations in terms of the Lie group action, at some given point $(z_0, \xi_0)$.
 Since the negative of $ \Lambda({\mathfrak{g}^*})$ is the matrix of infinitesimals for the induced Adjoint action of 
 $G$ on $\mathfrak{g}^*$ by Proposition \ref{prop3things} (2), we have  
 the following result.
 \begin{prop}\label{XiHamInvs} If $H=H(z,\xi)$ is an invariant of the Lie group action,
 \[ (z, \boldsymbol{\xi})\mapsto (g^{-1}{\cdot z}, \boldsymbol{\xi}\Am(g))\]
  that is, $H(g^{-1}\cdot z, \Ad(g)^T\xi)=H(z,\xi)$ for all $g\in G$, then for the Hamiltonian flow, Equation (\ref{HamFlowEqs}),
  \begin{equation}
  \dot{\boldsymbol{\xi}}\equiv 0.\end{equation}
 \end{prop}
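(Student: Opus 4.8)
The plan is to identify the lower block $\dot{\boldsymbol{\xi}}$ of the Hamiltonian system (\ref{HamFlowEqs}) with the derivative of $H$ along the infinitesimal generators of the canonical action of Theorem \ref{FinDimAdcanon}, and then read off the conclusion from the invariance hypothesis. First I would write the $\boldsymbol{\xi}$-rows of (\ref{HamFlowEqs}) explicitly as
\[
\dot{\boldsymbol{\xi}} = -\Phi^T\nabla_z H + \Lambda(\mathfrak{g}^*)\,\nabla_{\boldsymbol{\xi}}H ,
\]
so that the claim becomes the pointwise vanishing of the right-hand side, component by component.

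Next I would compute, for each basis vector $v_i$ of $\mathfrak{g}$, the infinitesimal generator $V_i$ of the canonical action $(z,\boldsymbol{\xi})\mapsto (g^{-1}\cdot z,\boldsymbol{\xi}\Am(g))$. Differentiating $\exp(tv_i)^{-1}\cdot z$ at $t=0$ and using (\ref{lambdadef}) gives the $z$-part $-X_i=-\sum_\ell \Phi_{\ell i}\partial_{z^\ell}$; the $\boldsymbol{\xi}$-part is the generator of the co-Adjoint action $\boldsymbol{\xi}\mapsto\boldsymbol{\xi}\Am(g)$, whose infinitesimal matrix is $-\Lambda(\mathfrak{g}^*)$ by Proposition \ref{prop3things}(2), hence, after using skew-symmetry of $\Lambda(\mathfrak{g}^*)$, it equals $\sum_j \Lambda(\mathfrak{g}^*)_{ij}\partial_{\xi_j}$. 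Thus $V_i=-X_i+\sum_j\Lambda(\mathfrak{g}^*)_{ij}\partial_{\xi_j}$ as a first-order operator on functions of $(z,\boldsymbol{\xi})$, and applying $V_i$ to $H$ yields exactly $-(\Phi^T\nabla_z H)_i+(\Lambda(\mathfrak{g}^*)\nabla_{\boldsymbol{\xi}}H)_i=\dot\xi_i$.

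Finally, differentiating the invariance identity $H(g^{-1}\cdot z,\boldsymbol{\xi}\Am(g))=H(z,\boldsymbol{\xi})$ at $g=\exp(tv_i)$, $t=0$, shows $V_i(H)\equiv 0$ for every $i$; combined with the identification of the previous step this forces $\dot\xi_i\equiv 0$ for all $i$, i.e.\ $\dot{\boldsymbol{\xi}}\equiv 0$. The only real obstacle is bookkeeping: one must keep the infinitesimal-matrix convention of (\ref{PhiInfVFdef}) straight against the transpose appearing in (\ref{HamFlowEqs}), pick up the correct minus sign coming from the $g^{-1}$ in the $z$-slot, and invoke Proposition \ref{prop3things}(2) with the sign that makes the $\Lambda(\mathfrak{g}^*)$-terms in $V_i(H)$ and in $\dot\xi_i$ coincide rather than cancel. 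Once those conventions are pinned down, no computation remains; alternatively one can skip $V_i$ and apply the chain rule to the invariance identity directly, which produces the single relation $\Phi^T\nabla_z H=\Lambda(\mathfrak{g}^*)\nabla_{\boldsymbol{\xi}}H$, again exactly the statement $\dot{\boldsymbol{\xi}}\equiv 0$.
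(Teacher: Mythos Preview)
Your argument is correct and is essentially the paper's proof, expanded. The paper simply states that differentiating the invariance hypothesis yields the infinitesimal identity $-\Phi^T\nabla_z H + \Lambda(\mathfrak{g}^*)\nabla_{\boldsymbol{\xi}}H=0$ (invoking Proposition~\ref{prop3things}(2) for the $\boldsymbol{\xi}$-part, as you do), and then observes that this is precisely the lower block of (\ref{HamFlowEqs}); your more detailed computation of the generators $V_i$ and your remark at the end that one could bypass $V_i$ and apply the chain rule directly are exactly that one-line argument with the bookkeeping made explicit.
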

 \begin{proof}
 By the invariance, we have 
 \[0=-\Phi^T \nabla_z H - \Lambda(\mathfrak{g^*})\nabla_{\boldsymbol{\xi}} H.\]
 But the right hand side is exactly $\dot{\boldsymbol{\xi}}$ for the Hamiltonian flow.
 \end{proof}
 
 \begin{excont} Consider the action of $SL(2)$ on the extended plane given as
 \[ (u,v) \mapsto \left( \frac{au+b}{cu+d}, \frac{v}{(cu+d)^2}\right)\]
 where \[ g=\left(\begin{array}{cc} a&b\\c&d\end{array}\right),\qquad ad-bc=1.\]
 Then the invariants of the action
 \[ (u,v,\boldsymbol{\xi})\mapsto \left( \frac{du-b}{-cu+a}, \frac{v}{(-cu+a)^2}, \boldsymbol{\xi}\Am(g)\right)\]
 where $\Am(g)$ is given in Equation (\ref{AdgTSL2}), are functions of
 \[ \kappa_1=4 \xi_1^2 + \xi_2\xi_3,\qquad \kappa_2=\frac1{v}\left( u^2\xi_2-u\xi_1-\xi_3\right)\]
 while the Poisson structure matrix for this Lie group action, given by Theorem \ref{FinDimPB} is
 \[ [\Lambda] = \left( \begin{array}{ccccc}
 0&0& 2u&1&-u^2\\ 0&0& 2v&0&-2uv\\ -2u&-2v&0&2\xi_2&-2\xi_3\\ -1&0&-2\xi_2&0&\xi_1\\
 u^2&2uv&2\xi_3&-\xi_1&0\end{array}\right).\]
 It is readily checked that for the Hamiltonian $H=H(\kappa_1,\kappa_2)$, Proposition \ref{XiHamInvs} holds.
 \end{excont}
 
 We can also understand the first set of equations, by comparing them to an infinitesimal action. Fix the point
 $(z_0, \boldsymbol{\xi})\in M\times \mathfrak{g}^*$. If we set $g_{\nabla_{\xi}H}(\epsilon)$ to be a smooth path in $G$ with $g(0)=e$ satisfying 
 \[ {g_{\nabla_{\xi}H}}'(0) =\sum_{i=1}^r H_{\xi_i}(z_0,\xi_0) v_i,\]
 then we have that 
  \[\dot z\big\vert_{z=z_0,\xi=\xi_0} =  \frac{{\rm d}}{{\rm d}\epsilon}\Big\vert_{\epsilon=0}  g_{\nabla_{\xi}H}(\epsilon)\cdot z_0.\]

 \subsection{Prolongation and the use of Lie group based moving frame coordinates}
 
 Lie group actions on manifolds may be prolonged to act on curves and surfaces immersed in $M$. In particular, they may 
 be prolonged to act on the jet bundle over $M$ (cf.\ \cite{Mansbook}).  If a Lie group action is locally effective on subsets, then the
 action will become free and regular after sufficient prolongation \cite{FelOl}, in which case a moving frame may be defined locally.
Many actions occurring in practice have the property that a sufficient prolongation will result in the existence of a moving frame for the action.
The frame provides co-ordinates on its domain, which look like the cartesian product of a  neighbourhood of the identity $e$ of the Lie group,
crossed with a transverse cross-section to the group orbits, which has invariants of the group action as coordinates. We will show that the 
Hamiltonian systems studied in the previous section have these invariants as constants of motion,  so that we may study the system
purely in terms of the frame variables and the coordinates of $\mathfrak{g}^*$.
 
 We recall some basic definitions and constructions; full details are given in (\cite{Mansbook} Ch 4). 
  \begin{center}
 \begin{figure}[htb]
 \caption{\label{FrameCoords} If the action is  free and regular on a domain $\mathcal{U}\subset M$, then there will be a transverse cross-section $\mathcal{K}$ to the orbits in $\mathcal{U}$, such that 
 the intersection of $\mathcal{K}$ with the orbit through a point $z$,  is a unique point, $\{k\}$. The unique element  $\sigma(z)\in G$ such that
 $\sigma(z)\cdot z =k$ defines the frame, $\sigma: M\rightarrow G$.  We have both that $\sigma(g\cdot z)=\sigma(z) g^{-1}$ for a left action, and 
 local coordinates $z=(\sigma(z), \sigma(z)\cdot z)$. }
 \includegraphics[scale=0.5,trim=20 500 100 10,clip=true]{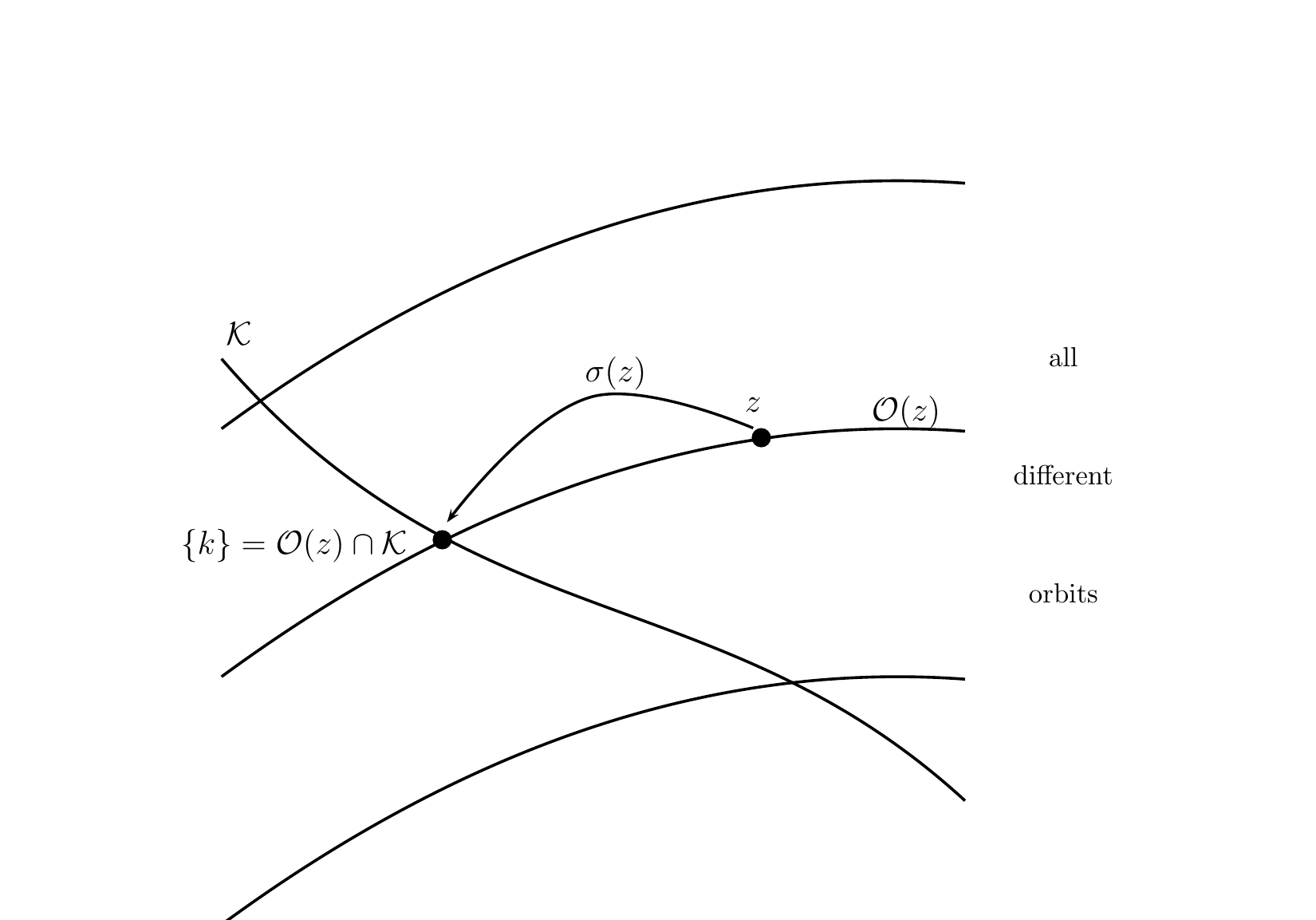}
 \end{figure}
 \end{center}
 Given a left Lie group action $G\times M\rightarrow M$ then a moving frame is an equivariant map $\sigma:M\rightarrow G$ such that
 $\sigma(g\cdot z )= \sigma(z) g^{-1}$ (a right frame), or $\sigma(g\cdot z )=g  \rho(z)$ (a left frame). A moving frame exists if the Lie group action is free and regular.
 A (local) 
 moving frame is usually calculated by setting the frame to be such that
 $\sigma(z)\cdot z \in \mathcal{K}$ where $\mathcal{K}$ is the locus of a set of equations, $\Phi(z)=0$, known as the \textit{normalisation equations}. In other words, $\sigma$ satisfies $\Phi(\sigma(z)\cdot z)=0$. The action is free and regular if the conditions for the
 Implicit Function Theorem hold for these equations. This method produces a right frame; since the Implicit Function Theorem yields a unique solution and both
 $h=\sigma(g\cdot z)$ and $h=\sigma(z)g^{-1}$ solve $\Phi(h\cdot (g\cdot z))=0$, they are equal.
  The group inverse of a right frame is a left frame.  If
 $\mathcal{K}$ is transverse to the orbits of the action, then the frame defines local coordinates. If $\mathcal{U}$ is the domain of the frame, then
 we have \[ \mathcal{U}=\mbox{dom}(\sigma) \approx G\times \mathcal{K},\qquad 
 z\mapsto (\sigma(z),\sigma(z)\cdot z),\] see Figure \ref{FrameCoords} for an illustration. 

 It can be readily seen that for a left action and a right frame, that $I(z)=\sigma(z)\cdot z$ is invariant, indeed, $I(g\cdot z) = \sigma(g\cdot z)\cdot (g\cdot z)=\rho(z) g^{-1}(g\cdot z)=\sigma(z)\cdot z=I(z)$. We denote the components of $I(z)$ as the \textit{normalised invariants}.
 
If we have a frame on (an open domain in) $M$, we can use the frame adapted coordinates {to transform the Hamiltonian equations for $z$ into equations for the evolution of the frame. This is what we do next. }

Assume for simplicity that $G$ is a matrix Lie group. To write down our results, we need some notation. Let $(a_1, \dots, a_r)$ be coordinates for $G$ in a neighbourhood of the identity $e\in G$, with
$g=g(a_1,\dots, a_r)$ being the group element with these coordinates, and we assume that
$e=g(0,0,\dots, 0)$.
If we define \[ v_i = \frac{{\rm d}}{{\rm d}\epsilon }\Big\vert_{\epsilon=0} g(0, \dots, 0,  a_i(\epsilon), 0, \dots 0)\]
then  $v_1$, \dots, $v_r$ form a basis for $\mathfrak{g}$.
Further, let the Jacobian of the map, $R_{g^{-1}}:G\rightarrow G$, $R_{g^{-1}}(h)=hg^{-1}$ at the identity element, $T_eR_{g^{-1}}$, be denoted by $\Phi_g$.

\begin{prop} \label{HamonFrame} Assume in the domain $\mathcal{U}$ that there is a frame $\sigma:\mathcal{U}\rightarrow G$ for the left action of $G$ on $M$.
Let $[\Lambda]$ be the structure matrix for the Poisson bracket defined in (\ref{FinDimPBleft}) by the group action, and let $H$ be a Hamiltonian function defined on $\mathcal{U}\times \mathfrak{g}^*$.
In the frame adapted coordinates $z\mapsto (\sigma(z), \sigma(z)\cdot z)$ on $\mathcal{U}$, we have that the Hamiltonian equations defined by $H$ and $\Lambda$ on the coordinates $ (\sigma(z), \sigma(z)\cdot z)$ are
\begin{equation}\label{FrameHamSys}\begin{array}{ccl} \displaystyle\frac{{\rm d}}{{\rm d}t}\  \sigma(z) &=& - \sigma(z)\sum \displaystyle\frac{\partial H}{\partial \xi_i} v_i\\[12pt]
\displaystyle\frac{{\rm d}}{{\rm d}t} \left(\sigma(z)\cdot z\right) &=& 0.\end{array}\end{equation}
\end{prop}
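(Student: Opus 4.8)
The plan is to translate the Hamiltonian system (\ref{HamFlowEqs}) into the frame-adapted coordinates $z \mapsto (\sigma(z), \sigma(z)\cdot z)$ by a direct change of variables, using two facts established earlier: that $I(z) = \sigma(z)\cdot z$ is invariant under the action, and Proposition \ref{XiHamInvs} in spirit, together with the infinitesimal description of $\dot z$ given just before this subsection, namely $\dot z\vert_{z_0,\xi_0} = \frac{{\rm d}}{{\rm d}\epsilon}\vert_{\epsilon=0} g_{\nabla_\xi H}(\epsilon)\cdot z_0$ where $g_{\nabla_\xi H}'(0) = \sum_i H_{\xi_i}(z_0,\xi_0) v_i$. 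The second equation of (\ref{FrameHamSys}) is then almost immediate: differentiating $I(z(t)) = \sigma(z(t))\cdot z(t)$ along the flow and using that $I$ is constant on orbits while $\dot z$ is, at each instant, tangent to the orbit through $z$ (being of the form $\frac{d}{d\epsilon}g(\epsilon)\cdot z$), gives $\frac{d}{dt} I(z(t)) = 0$. More carefully, since $I(g\cdot z) = I(z)$ for all $g$, differentiating in $g$ shows $dI_z(X(v)(z)) = 0$ for every $v\in\g$; applying this to $v = \sum_i H_{\xi_i} v_i$ and noting $\dot z = X(\sum_i H_{\xi_i} v_i)(z) = (\Phi \nabla_\xi H)^T\nabla_z$, which is exactly the top block of (\ref{HamFlowEqs}), yields $\frac{d}{dt}(\sigma(z)\cdot z) = dI_z(\dot z) = 0$.

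For the first equation, I would differentiate the frame equation along the flow. The cleanest route is to use the defining property $\sigma(g\cdot z) = \sigma(z)g^{-1}$ of a right frame (for a left action). Fix $t$ and write $z = z(t)$, $g(\epsilon) = g_{\nabla_\xi H}(\epsilon)$ so that $z(t+\epsilon) = g(\epsilon)^{-1}\cdot z + o(\epsilon)$ at leading order — here I need that the flow of the $z$-variable agrees to first order with the infinitesimal action generated by $-\sum_i H_{\xi_i}v_i$; since $\dot z = X(\sum H_{\xi_i} v_i)(z)$, and $\frac{d}{d\epsilon}\vert_0 g(\epsilon)^{-1}\cdot z = -X(g'(0))(z) = -X(\sum H_{\xi_i} v_i)(z)$, I should set $g'(0) = -\sum_i H_{\xi_i} v_i$ so that $z(t+\epsilon) = g(\epsilon)^{-1}\cdot z + O(\epsilon^2)$. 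Then
\[
\sigma(z(t+\epsilon)) = \sigma(g(\epsilon)^{-1}\cdot z) + O(\epsilon^2) = \sigma(z)\,g(\epsilon) + O(\epsilon^2),
\]
and differentiating at $\epsilon = 0$ gives $\frac{d}{dt}\sigma(z) = \sigma(z)\, g'(0) = -\sigma(z)\sum_i H_{\xi_i}(z,\xi) v_i$, which is the asserted equation. The quantity $\Phi_g = T_e R_{g^{-1}}$ introduced before the statement is exactly the linear map making this left-translation identification precise on the Lie algebra, so invoking it lets one phrase the computation invariantly rather than in matrix entries.

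The main obstacle is the care needed in the step ``$z(t+\epsilon) = g(\epsilon)^{-1}\cdot z + O(\epsilon^2)$'': the Hamiltonian flow of $z$ is genuinely nonlinear and only matches the one-parameter action to first order, since $\nabla_\xi H$ itself varies along the flow. One must be scrupulous that only a first-order (tangent-vector) computation is being performed at each fixed $t$ — i.e. we are comparing the tangent vector $\dot z(t)$ with the generator of an $\epsilon$-path, not claiming the flows coincide — and that $\sigma$, being smooth on its domain $\mathcal{U}$, has a well-defined differential so the $O(\epsilon^2)$ error is harmless after one differentiation. A secondary point worth checking is the sign and side (left vs.\ right translation by $\sigma(z)$), which is pinned down by the convention $\sigma(g\cdot z) = \sigma(z)g^{-1}$ for a right frame and the sign choice in the left-action Poisson matrix (\ref{FinDimPBleft}); getting these consistent is the only place an error could creep in. Once the infinitesimal bookkeeping is set up correctly, both equations of (\ref{FrameHamSys}) fall out by a one-line differentiation.
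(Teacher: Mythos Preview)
Your argument is correct and rests on the same two facts the paper uses: the invariance of $I(z)=\sigma(z)\cdot z$ and the right-frame equivariance $\sigma(g\cdot z)=\sigma(z)g^{-1}$, combined with the observation that $\dot z$ is the infinitesimal action of $\sum_i H_{\xi_i}v_i$. The difference is organisational. The paper first passes to frame-adapted coordinates and observes that the matrix of infinitesimals in those coordinates has the block form $\Phi=\begin{pmatrix}\Phi_{\sigma(z)}\\0\end{pmatrix}$, so the Hamiltonian system written in the new variables immediately gives $\frac{d}{dt}(\sigma(z)\cdot z)=0$ from the zero block; the $\sigma$-equation is then identified by matching the coordinate expression $\dot\sigma=\Phi_{\sigma(z)}^T\nabla_\xi H$ with the differentiated equivariance relation. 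You instead keep the flow in the original $z$-coordinates and differentiate $\sigma(z(t))$ and $I(z(t))$ along it via the chain rule. Your route avoids computing the transformed infinitesimal matrix and is arguably slicker; the paper's route makes explicit that the Poisson structure itself block-decomposes in frame coordinates, which is a stronger statement useful for the examples that follow. Your care with the first-order approximation $z(t+\epsilon)=g(\epsilon)^{-1}\cdot z+O(\epsilon^2)$ and the sign bookkeeping is exactly right and is the one place a slip could occur; the paper sidesteps this by working directly with the infinitesimal matrix rather than an auxiliary curve.
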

\begin{proof} 
Let $(a_1, \dots, a_r)$ be coordinates for $G$ in a neighbourhood of the identity $e\in G$. Let the frame
in these coordinates be denoted as $z\mapsto (\sigma^1(z),\sigma^2(z),\dots, \sigma^r(z))$. Assume the action of $G$ on $M$ is a left action
(the results for a right action on $M$ are similar).  The matrix of infinitesimals of the action of $G$ on $\mathcal{U}$ is, in these coordinates,
\[ \Phi = \bordermatrix{ & a_1 \dots a_r \cr
\sigma(z) & \Phi_{\sigma(z)} \cr
\sigma(z)\cdot z & 0\cr}.\]
Then the Hamiltonian system is
\[ \displaystyle\frac{{\rm d}}{{\rm d}t} \left( \begin{array}{c}\sigma(z) \\ \sigma(z)\cdot z \\ \xi\end{array}\right)
= \left( \begin{array}{c|c} \begin{array}{cc}0&0\\0&0\end{array} & \begin{array}{c} \Phi_{\sigma(z)} \\ \hline 0\end{array}\\ \hline 
\begin{array}{c|c} -\Phi_{\sigma(z)} ^T & 0 \end{array} & \Lambda(\mathfrak{g}^*)\end{array}\right) \left( \begin{array}{c}\nabla_{\sigma(z)} H \\ \nabla_{\sigma(z)\cdot z} H \\ \nabla_{\xi} H\end{array}\right)\]
where in this equation, the use of the coordinate forms of $\sigma(z)$, $\sigma(z)\cdot z$ and $\xi$ are implicit, to ease the notation.  
It is immediate that the second equation of (\ref{FrameHamSys}) holds. To see the first, we note, that
\[ \frac{{\rm d}}{{\rm d}\epsilon}\Big\vert_{\epsilon=0} \sigma(g(\epsilon)\cdot z ) = \frac{{\rm d}}{{\rm d}\epsilon}\Big\vert_{\epsilon=0}  \sigma(z) g(\epsilon)^{-1}
= -\sigma(z) g'(0)\]
so that if $g'(0)=\sum\alpha_i v_i$ then, 
\begin{equation}\label{SigEqsCompact}\frac{{\rm d}}{{\rm d}\epsilon}\Big\vert_{\epsilon=0} \sigma(g(\epsilon)\cdot z ) = -\sigma(z) \sum\alpha_i v_i.\end{equation}
But by definition of $\Phi_{\sigma(z)}$, we also have that this equation is equivalent to 
\begin{equation}\label{SigEqsOm}\frac{{\rm d}}{{\rm d}\epsilon}\Big\vert_{\epsilon=0}  (\sigma^1(g(\epsilon)\cdot z),\dots, \sigma^r(g(\epsilon)\cdot z)) =(\alpha_1, \dots, \alpha_r) \Phi_{\sigma(z)}.\end{equation}
Since the Hamiltonian equations for $\sigma(z)$ are
\begin{equation}\label{SigEqs2} \frac{{\rm d}}{{\rm d}t} \left(\begin{array}{c} \sigma^1(z)\\ \vdots\\\sigma^r(z)\end{array}\right) = \Phi_{\sigma(z)}^T \nabla_{\xi} H\end{equation}
we have, comparing (\ref{SigEqs2}), (\ref{SigEqsOm}) and (\ref{SigEqsCompact}), that the first equation in (\ref{FrameHamSys}) is simply a restatement of (\ref{SigEqs2}).
\end{proof}

We now illustrate these results with an example. 

\begin{excont*}\textbf{\ref{mySL2projEx} (continued). } We take the coordinate $u$ for $M=\mathbb{R}$ to depend on the independent variable $v$
 and we assume that $v$ is invariant under the action, so that $g\cdot v =v$. The prolongation of the action is effected by the chain rule, and is defined by
\[ g\cdot u_v = \frac{\partial (g\cdot u)}{\partial (g\cdot v)},\qquad g\cdot u_{vv} = \frac{\partial^2 (g\cdot u)}{\partial (g\cdot v)^2},\qquad 
g\cdot u_{(nv)} = \frac{\partial^n (g\cdot u)}{\partial (g\cdot v)^n}\]
which, since $g\cdot v=v$, yields
\[g\cdot u_v =  \frac{u_v}{(c u+d)^2},\qquad g\cdot u_{vv} = \frac{(c u+d) u_{vv} -2c u_v^2}{(c u+d)^3}\]
and \[ g\cdot u_{vvv} = \frac{(cu+d)^2 u_{vvv}-6c(cu+d)u_vu_{vv}+6c^2u_v^3}{(cu+d)^4}\]
to give the first three prolonged actions. It can be seen that the prolonged action on $(u,u_v, u_{vv})$-space is free and regular, indeed, we may take the normalisation equations
\[ g\cdot u=0,\qquad g\cdot u_v=1,\qquad g\cdot u_{vv}=0\]
to obtain a frame $\sigma$ on the domain $u_v>0$,
\[ \sigma:\qquad a = \frac1{\sqrt{u_v}},\qquad b=  -\frac{u}{\sqrt{u_v}},\qquad c= \frac{u_{vv}}{2 u_v ^{3/2}}\] or in the standard matrix representation for $SL(2)$,
\[ \sigma(u,u_v,u_{vv})= \left(\begin{array}{cc} \frac1{\sqrt{u_v}} &  -\frac{u}{\sqrt{u_v}} \\ \frac{u_{vv}}{2 u_v ^{3/2}} & \frac12\frac{2 u_{vv}^2 - u u_{vv}}{u_v^{3/2}}  \end{array}\right)  \]
The equivariance of the frame is demonstrated by noting that
\[ \sigma(g\cdot u,g\cdot u_v, g\cdot u_{vv})= \left(\begin{array}{cc} \frac1{\sqrt{u_v}} &  -\frac{u}{\sqrt{u_v}} \\ \frac{u_{vv}}{2 u_v ^{3/2}} & \frac12\frac{2 u_{vv}^2 - u u_{vv}}{u_v^{3/2}}  \end{array}\right)\left(\begin{array}{cc} \delta & -\beta\\ -\gamma & \alpha\end{array}\right),\qquad g = \left(\begin{array}{cc} \alpha & \beta\\ \gamma
&
\delta\end{array}\right)\]
where $\alpha\delta - \beta\gamma=1$. This equivariance then yields the matrix of infinitesimals for the action on the frame.  
If we use the frame to change coordinates from $z=(u,u_v, u_{vv},u_{vvv}, u_{4v}, \dots)$ to
\[ (\sigma(z), \sigma\cdot z)=\left(\sigma^a = \frac1{\sqrt{u_v}}, \sigma^b = -\frac{u}{\sqrt{u_v}}, \sigma^c = \frac{u_{vv}}{2 u_v ^{3/2}} , I^u_{111}=\sigma\cdot u_{vvv}, I^u_{1111}= \sigma\cdot u_{4v},
\dots \right)\]
where the $I^u_{1\cdots 1}=\sigma\cdot u_{nv}$ are the normalised invariants,
then the matrix of infinitesimals is (reverting to labelling the independent group parameters as $a$, $b$ and $c$)
\[ \Omega =\bordermatrix{ & \sigma^a &\sigma^b & \sigma^c & I^u_{111} & I^u_{1111} & \dots \cr
a &  -\sigma^a & \sigma^b & -\sigma^c & 0 & 0 & \dots \cr
b &  0& -\sigma^a & 0 &0&0&\dots\cr
c& -\sigma^b &0& -\frac{1+\sigma^b\sigma^c}{\sigma^a}&0&0&\dots}
\]
It can be seen that when this matrix is inserted into the Poisson structure matrix (\ref{FinDimPBleft}), that we obtain 
\[ \frac{{\rm d}}{{\rm d}t} I^u_{111} =0,\qquad    \frac{{\rm d}}{{\rm d}t} I^u_{1111} =0,\qquad  \frac{{\rm d}}{{\rm d}t} I^u_{1\cdots 1} =0,\]
so that these coordinates play no role, other than as constants. Thus we obtain, no matter how high we prolong the system, a six dimensional system
for the frame parameters $\sigma^a$, $\sigma^b$ and $\sigma^c$ and the coordinates of $\mathfrak{sl}(2)^*$,  $\xi_1$, $\xi_2$ and $\xi_3$.
The Poisson structure matrix for this six dimensional system is
\begin{equation}\label{SL2Sixlambda} \Lambda = \left(\begin{array}{cccccc} 0&0&0& -\sigma^a & 0 & -\sigma^b\\0&0&0& \sigma^b & -\sigma^a & 0\\ 0&0&0&-\sigma^c &0& 
-\frac{1+\sigma^b\sigma^c}{\sigma^a} \\ \sigma^a & -\sigma^b & \sigma^c &0& 2\xi_2 & -2\xi_3 \\  
0& \sigma^a & 0&-2\xi_2 &0&\xi_1 \\ \sigma^b &0& \frac{1+\sigma^b\sigma^c}{\sigma^a} & 2\xi_3 & -\xi_1&0\end{array}\right)
\end{equation}
for which the Jacobi identity may be verified directly.
Considering the resulting equations for the components of $\sigma$ for a resulting Hamiltonian system, $\dot\sigma = \Omega^T \nabla_{\xi}H$, 
noting that $\sigma^d=(1+\sigma^b\sigma^c)/\sigma^a$ and rearranging, yields
\[ \left(\begin{array}{cc} \dot\sigma^a & \dot\sigma^b\\\dot\sigma^c& \dot\sigma^d\end{array}\right)=
-\left(\begin{array}{cc} \sigma^a & \sigma^b\\ \sigma^c& \sigma^d\end{array}\right)\left(\begin{array}{cc} H_{\xi_1} & H_{\xi_2}\\ H_{\xi_3} & -H_{\xi_1}\end{array}\right)\]
verifying our remarks that a Lie group integrator may be used to integrate the Hamiltonian equations for the frame. 

Finally, we illustrate our results by considering the Hamiltonian, 
\begin{equation}\label{SL2SIxH} H=\frac15 (u^2 + u_x^2 + u_{xx}^2)+ \xi_1^2+\xi_2^2+\xi_3^2= \frac15\left( (\sigma^a)^{-4} + \frac{(\sigma^b)^2}{(\sigma^a)^2} + 4\frac{(\sigma^c)^2}{(\sigma^a)^6}\right)
+\xi_1^2+\xi_2^2+\xi_3^2,\end{equation}
with initial data $u(0)=u_v(0)=u_{vv}(0)=\xi_1(0)=\xi_2(0)=\xi_3(0)=1$ or $\sigma^a(0)=1$, $\sigma^b(0)=-1$ and $\sigma^c(0)=\textstyle\frac12$.
We plot the results in Figure \ref{SL2HamSix}. It appears (to the naked eye) that the orbit for $(\xi_1(t),\xi_2(t),\xi_3(t))$ runs from one periodic orbit to another, while the orbit for the Lie Poisson system with the same initial data is periodic, so that perhaps this orbit has split into two. In considering this system, we have made no use of the fact that $u$,v $u_v$ and $u_{vv}$ are related by differentiation with respect to $v$, and we could just as easily have called them $u$, $u_1$ and $u_2$, simply using the 
prolongation method to obtain a free and regular action and hence a frame. 
  \begin{center}
 \begin{figure}[h]
 \caption{For the Hamiltonian $H$, given in (\ref{SL2SIxH}), some plots for $(\dot \sigma, \dot\xi)^T=\Lambda (\nabla_{\sigma} H, \nabla_{\xi} H)^T$ with 
 $\Lambda$ given in (\ref{SL2Sixlambda}) and the initial data $\sigma^a(0)=1$, $\sigma^b(0)=-1$, $\sigma^c(0)=\textstyle\frac12$, $\xi_1(0)=\xi_2(0)=\xi_3(0)=1$ are shown. In Plot (i), the dashed line is for $H=  \xi_1^2+\xi_2^2+\xi_3^2$, the Lie Poisson structure $\Lambda(\mathfrak{sl}(2)^*)$, and the same initial data.
 \label{SL2HamSix}}

 \begin{minipage}{\textwidth}
 \begin{minipage}[b]{.4\textwidth}
 \includegraphics[width=1.1\textwidth, trim=0 0 0 0, clip=true]{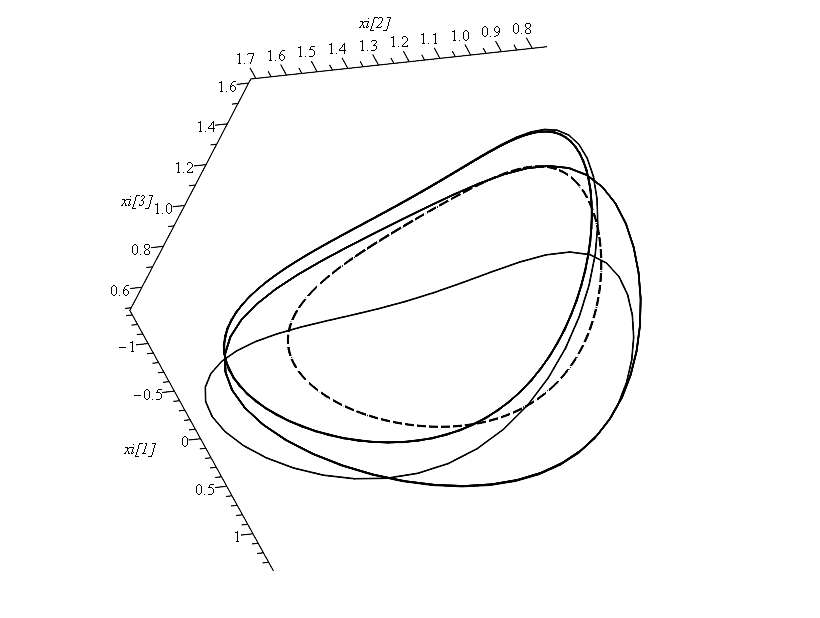} 
 
 (i) $t\mapsto t\mapsto (\xi_1(t),\xi_2(t),\xi_3(t))$ 
 \end{minipage}
 \begin{minipage}[b]{.5\textwidth}
 \includegraphics[width=\textwidth, trim=0 340 0 10, clip=true]{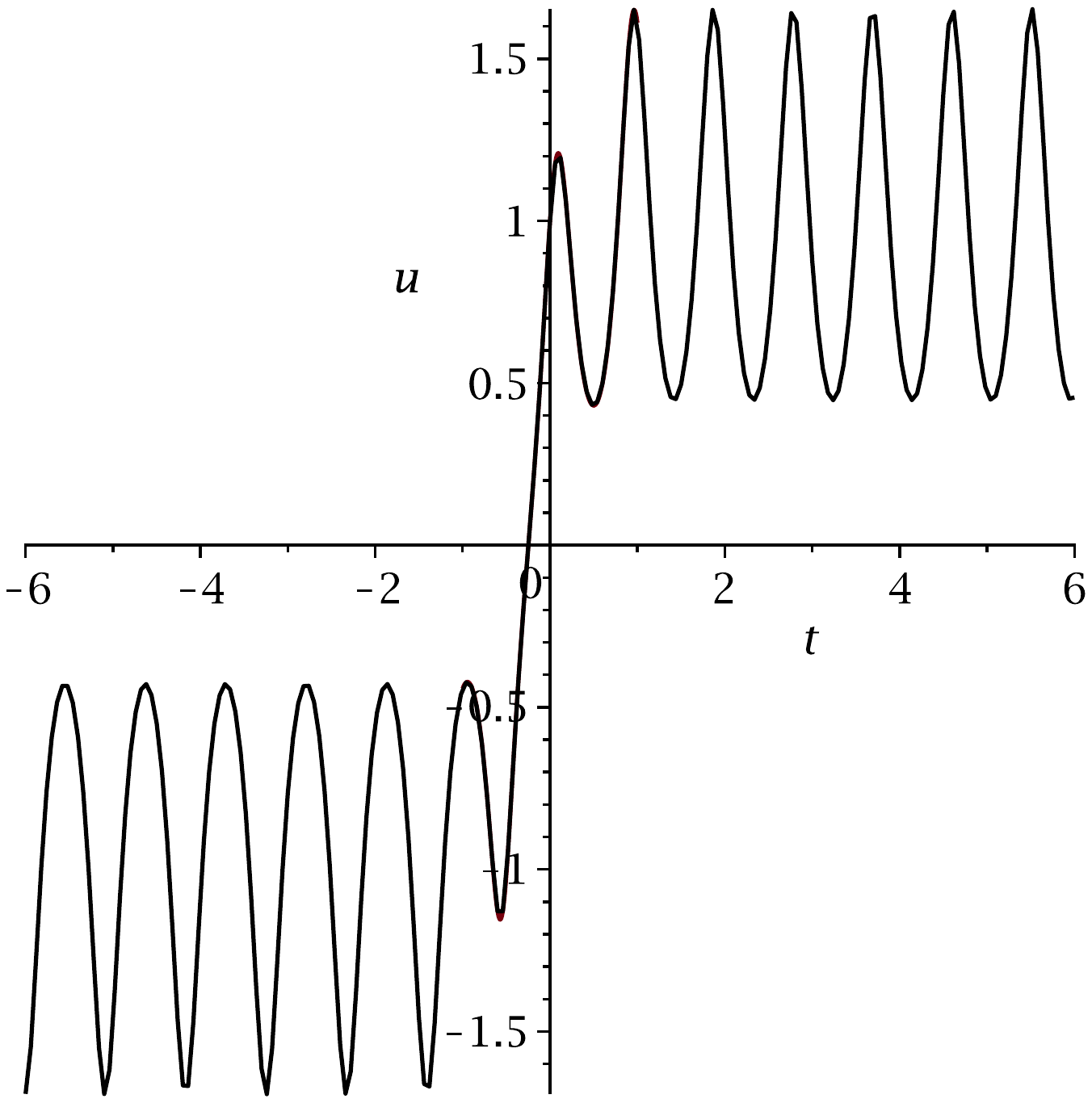}
 
 (ii) $t\mapsto u(t)= -\sigma^b(t)/\sigma^a(t)$
 \end{minipage}
 \end{minipage}
 
 \end{figure}
 \end{center}

\end{excont*}
\section{Geometric interpretation of the second bracket}\label{SecAlggeomBr}

The second Lie bracket (\ref{leftddblbracketDef}) was discussed in \cite{MKLund} in terms of a connection having zero curvature and constant torsion.
{The description which follows can be described in terms of Lie algebroid bisections (see \cite{Sch}). Instead, we present a more algebraic description, better suited to our audience.} 

The second bracket has a natural {\it geometric interpretation} that we proceed to describe next. 
We first note there is a natural product on the group of sections $A^1(M,M\times G)$ on $M$, defined by
\begin{equation}\label{NatProdA1MG}
(z,g(z))\cdot_{\mbox{nat}} (z,h(z)) = (z, g(z)h(z)).
\end{equation}

Recall the definition of an action, Definition \ref{ActionDef}. Assume we have an action of $G$ on $M$ given by
\begin{eqnarray*}
G\times M \to& M \\ (g,s) \to& \lambda(g,s) = g\cdot s
\end{eqnarray*}
One might think that this action induces a natural action of the group of sections $A^1(M,M\times G)$ on $M$, defined by $(g,s) \to \lambda(g(s),s) = g(s)\cdot s$. But this is not an action with respect to the natural product, given in
Equation (\ref{NatProdA1MG}), since, if $\lambda$ is, for example, a left action, one has
\[
(gh,s)\to \lambda(g(s)h(s), s) = \lambda(g(s),\lambda(h(s),s))
\]
while
\[
(g,(h,s)) \to (g,\lambda(h(s),s))\to \lambda(g(\lambda(h(s),s)), \lambda(h(s),s)).
\]
On the other hand, we can define a different product that will give $A^1(M,M\times G)$ a structure of local Lie group (close to the identity), and will allow us to define a natural local action on $M$. If $\lambda$ is a left action of $G$ on $M$, define
\begin{eqnarray*}
A^1(M, G)\times A^1(M, G) \to& A^1(M, G), \\ (g(s), h(s)) \to& (g\ast h)(s) = g(\lambda(h(s), s)) h(s),
\end{eqnarray*}
while if $\lambda$ is a right action, define 
\begin{eqnarray*}
A^1(M, G)\times A^1(M, G) \to& A^1(M, G), \\ (g(s), h(s)) \to& (g\ast h)(s) = g(s) h(\lambda(g(s), s)).
\end{eqnarray*}
\begin{prop} \label{prop1} The product $\ast$ is associative. If $e(s)$ is the unit section with $e(s)=e$ for all $s\in M$, then $(e\ast h)(s) = (h\ast e)(s) = h(s)$, and in a neighborhood of $e\in A^1(M, G)$, with respect to the $C^1$-topology in $A^1(M, G)$, the inverse element exists {although the map $g \to g^{-1}$ is not smooth in general}. The map
\begin{eqnarray*}
A^1(M, G)\times M \to& M\\ (g,s) \to& \sigma(g,s) = \lambda(g(s), s)
\end{eqnarray*}
is an {\it action} {of the formal Lie group $A^1(M, G)$} on $M$, with the same parity as the original action of $G$ on $M$.
\end{prop}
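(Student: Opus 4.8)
The plan is to verify each assertion of Proposition~\ref{prop1} directly from the definition of $\ast$, treating the left-action case (the right-action case being entirely parallel, with $\lambda$ applied to $g(s)\cdot s$ instead of $h(s)\cdot s$). Throughout I would abbreviate $\lambda(g(s),s) = g(s)\cdot s$ and keep careful track of which argument of each section is being evaluated, since the whole subtlety lies there. The four things to prove are: (i) associativity of $\ast$; (ii) the constant section $e(s)\equiv e$ is a two-sided unit; (iii) existence of local inverses near the identity in the $C^1$-topology, with a remark on non-smoothness of inversion; and (iv) that $\sigma(g,s) = g(s)\cdot s$ is a genuine (local) left action of $(A^1(M,G),\ast)$ on $M$.

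For associativity, I would compute $((g\ast h)\ast k)(s)$ and $(g\ast(h\ast k))(s)$ and show they agree. Unwinding, $(h\ast k)(s) = h(k(s)\cdot s)k(s)$, so $(g\ast(h\ast k))(s) = g\big((h\ast k)(s)\cdot s\big)(h\ast k)(s) = g\big(h(k(s)\cdot s)k(s)\cdot s\big)\,h(k(s)\cdot s)\,k(s)$. On the other side, $(g\ast h)(s') = g(h(s')\cdot s')h(s')$, and with $s' = k(s)\cdot s$ we get $((g\ast h)\ast k)(s) = (g\ast h)(k(s)\cdot s)\,k(s) = g\big(h(k(s)\cdot s)\cdot(k(s)\cdot s)\big)\,h(k(s)\cdot s)\,k(s)$. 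The two expressions coincide precisely because $\lambda$ is a left action: $h(k(s)\cdot s)k(s)\cdot s = h(k(s)\cdot s)\cdot(k(s)\cdot s)$, using $(ab)\cdot s = a\cdot(b\cdot s)$ with $a = h(k(s)\cdot s)$ and $b = k(s)$. So associativity of $\ast$ is exactly the associativity axiom of the original action — this is the conceptual heart and I expect it to be quick once the bookkeeping is set up. The unit law (ii) is immediate: $(e\ast h)(s) = e(h(s)\cdot s)h(s) = e\cdot h(s) = h(s)$ and $(h\ast e)(s) = h(e(s)\cdot s)e(s) = h(s)$ since $e(s)=e$ and $e\cdot s = s$.

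For (iii), I would argue that given $g$ in a $C^1$-neighbourhood of $e$, solving $(g\ast h)(s) = e$ for $h$ amounts to solving $g(h(s)\cdot s)h(s) = e$, i.e. $h(s) = g(h(s)\cdot s)^{-1}$, pointwise in $s$ — a fixed-point equation in the finite-dimensional group $G$ for each $s$. An application of the implicit function theorem, with contraction-mapping estimates uniform in $s$ (here one uses that $M$ is locally compact and that $g$ is $C^1$-close to the constant section $e$), yields a unique section $h$ solving this near $e$, with $h$ depending continuously on $s$; similarly for a left inverse, and the two agree by the usual monoid argument once both exist. The remark that $g\mapsto g^{-1}$ is not smooth in general I would justify by noting that the inverse section involves evaluating $g$ at the moved point $h(s)\cdot s$, which introduces a loss of one derivative — this is the same phenomenon that prevents $\operatorname{Diff}(M)$ from being a Banach–Lie group, and I would state it as such rather than chase an explicit example. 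Finally (iv): $\sigma(e,s) = e\cdot s = s$, and $\sigma(g,\sigma(h,s)) = g(h(s)\cdot s)\cdot(h(s)\cdot s) = \big(g(h(s)\cdot s)h(s)\big)\cdot s = (g\ast h)(s)\cdot s = \sigma(g\ast h,s)$, again using that $\lambda$ is a left action; so $\sigma$ is a left action with the same parity.

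The main obstacle is step (iii): associativity, the unit, and the action property are all one-line consequences of the action axiom, but the existence and regularity of inverses genuinely requires the infinite-dimensional/functional-analytic input — choosing the right topology (the paper specifies $C^1$), getting uniform-in-$s$ contraction estimates so that the pointwise implicit function theorem assembles into a section, and being honest about the derivative loss that makes inversion merely continuous rather than smooth. I would present (i), (ii), (iv) crisply and devote the bulk of the written proof to (iii), possibly citing the standard theory of groups of the form considered in \cite{Sch} for the functional-analytic details rather than reproving them.
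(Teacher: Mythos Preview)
Your proposal is correct, and for associativity, the unit, and the action property it is essentially identical to the paper's proof: both simply unwind the definition of $\ast$ and reduce everything to the left-action axiom $(ab)\cdot s = a\cdot(b\cdot s)$.

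For the existence of inverses, however, you and the paper take different routes. You set up the right-inverse equation $(g\ast h)(s)=e$ as the implicit fixed-point problem $h(s)=g(h(s)\cdot s)^{-1}$ and propose to solve it by contraction, uniformly in $s$. The paper instead observes that the \emph{orbit map} $\phi_g:s\mapsto g(s)\cdot s$ is $C^1$-close to the identity on $M$ when $g$ is $C^1$-close to the constant section $e$, so the ordinary finite-dimensional inverse function theorem makes $\phi_g$ a local diffeomorphism; writing down the \emph{left}-inverse equation $(g^{\ast-1}\ast g)(s)=e$, i.e.\ $g^{\ast-1}(g(s)\cdot s)=g^{-1}(s)$, then yields the explicit formula $g^{\ast-1}(t)=g^{-1}\bigl(\phi_g^{-1}(t)\bigr)$ with no fixed-point argument needed. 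The paper's approach is shorter and makes the loss-of-derivative phenomenon transparent (it sits entirely in $\phi_g^{-1}$), whereas your fixed-point setup, while correct, requires more analytic scaffolding and obscures this explicit formula. Since you anticipated (iii) being the bulk of the work, it is worth noting that the paper disposes of it in two sentences by inverting $\phi_g$ rather than solving for $h$ directly.
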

\begin{proof}
The property $e\ast h = h \ast e = h$ is trivially satisfied. Associativity also follows from
\[
((g\ast h)\ast f)(s) 
\]
\[= (g\ast h)(\lambda(f(s),s)) f(s) = g(\lambda(h(\lambda(f(s),s)),\lambda(f(s),s)) h(\lambda(f(s),s)) f(s) 
\]
\[
= g(\lambda(h(\lambda(f(s),s))f(s), s))h(\lambda(f(s),s)) f(s) = g(\lambda((h\ast f)(s), s)) (h\ast f)(s) 
\]
\[
= g\ast(h\ast f)(s).
\]
if a left action, or from
\[
((g\ast h)\ast f)(s) 
\]
\[= (g\ast h)(s) f(\lambda((g\ast h)(s), s)) 
= g(s) h(\lambda(g(s), s)) f(\lambda(g(s)h(\lambda(g(s), s))), s)) 
\]
\[
= g(s) h(\lambda(g(s), s)) f(\lambda((h(\lambda(g(s), s))), \lambda(g(s), s))) = g(s) (h\ast f)(\lambda(g(s), s)) 
\]
\[= (g\ast(h\ast f))(s)
\] 
if a right action. 

Since $e(s)\cdot s = s$, {the finite dimensional} inverse function theorem guarantees that $g(s)\cdot s$ is invertible for $g$ in a $C^1$-neighborhood of $e$, {meaning whenever 
\[
\mathrm{sup}_{s\in M}\{|g(s)-e(s)|, |Dg(s)-De(s)|\} .
\]
is small, where $|\cdot|$ is any metric in $G$ (we can assume that $G\subset \mathrm{GL}(n)$)}. {Therefore, the relation $g^{\ast-1}(g(s)\cdot s) = g^{-1}(s)$ allows us} to define a $\ast$-inverse of $g$ around $e(s)$.

Finally, if the action of $G$ is left
\[
\sigma(g\ast h,s) = \lambda((g\ast h)(s), s) = \lambda(g(\lambda(h(s),s)))h(s), s) 
\]
\[
= \lambda(g(\lambda(h(s),s)),\lambda(h(s),s)) = \sigma(g,\sigma(h,s)). 
\]
If the action is right
\[
\sigma(g\ast h,s) = \lambda((g\ast h)(s), s) = \lambda(g(s)h(\lambda(g(s),s)), s) 
\]
\[
= \lambda(h(\lambda(g(s),s)),\lambda(g(s),s)) = \sigma(h,\sigma(g,s)) 
\]
and so we have an action of $A^1(M,M\times G)$ on $M$ with the same parity as the original.
\end{proof}}

{Consider now the $\ast$-conjugation
\begin{eqnarray*}
A^1(M, G)\times A^1(M, G) \to& A^1(M, G), \\ (g(s), h(s)) \to& (g\ast h\ast g^{\ast-1})(s) 
\end{eqnarray*}
where $g^{\ast-1}$ is the (local) inverse of $g$ with respect to the $\ast$ product.}

{\begin{theorem} Assume $G$ acts  on $M$ and denote by $A^1(M,M\times G)$ be the local {formal} Lie group with operation $\ast$ {as described in proposition \ref{prop1}}. Then, the Lie bracket of its Lie algebra $A^1(M, M\times \g)$ is defined by $-\llbracket \,  \rrbracket$.
\end{theorem}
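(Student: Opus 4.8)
The plan is to compute the Lie algebra bracket of the local Lie group $(A^1(M,M\times G),\ast)$ directly from the definition of the bracket as the second-order term of the $\ast$-conjugation, and to match it term by term against the formula $(\ref{leftddblbracketDef})$ for $\llbracket\,,\,\rrbracket$ (resp.\ $(\ref{rightddblbracketDef})$ in the right-action case). Concretely, I would fix $x,y\in A^1(M,M\times\g)\simeq C^\infty(M,\g)$, choose curves $g_\epsilon(s)=\exp(\epsilon x(s))$ and $h_\tau(s)=\exp(\tau y(s))$ in $A^1(M,M\times G)$ through the identity section $e$, form the $\ast$-conjugate $c_{\epsilon,\tau}(s) = (g_\epsilon \ast h_\tau \ast g_\epsilon^{\ast-1})(s)$, and extract
\[
 \frac{\partial^2}{\partial\epsilon\,\partial\tau}\Big\vert_{\epsilon=\tau=0}\, c_{\epsilon,\tau}(s),
\]
which by the standard definition of the Lie bracket on the Lie algebra of a (local, even formal) Lie group equals the bracket of the algebra applied to $x$ and $y$. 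The claim is that this equals $-\llbracket x,y\rrbracket(s)$.

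The key steps, in order. First, I would record that the tangent space to $A^1(M,M\times G)$ at $e$ is $A^1(M,M\times\g)\simeq C^\infty(M,\g)$, identifying $\frac{d}{d\epsilon}\vert_0 g_\epsilon$ with the section $s\mapsto x(s)$; this is where the remark that $g\mapsto g^{\ast-1}$ need not be smooth must be handled with the (already noted) care — one works with one-parameter curves and formal/one-sided differentiation, which is enough to define the bracket. Second, I would expand the $\ast$-product: for a left action, $(g\ast h)(s) = g(\lambda(h(s),s))\,h(s)$, so the $\lambda$-twist in the first factor is precisely what injects the Lie-derivative terms $\mathcal L_{\rho(x)}y$ and $\mathcal L_{\rho(y)}x$ when one differentiates $\epsilon\mapsto x(\lambda(\exp(\tau y(s))\cdot s,\,s))$ using $(\ref{lambdadef})$–$(\ref{rhoxdef})$, i.e.\ $\frac{d}{d\tau}\vert_0 \lambda(h_\tau(s),s) = \rho(y)(s)$. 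Third, I would peel off the three contributions to the mixed second derivative: (a) the "group-theoretic" part coming from $g\,h\,g^{-1}$ evaluated with the $s$-argument frozen, which yields the pointwise bracket $[x,y](s)$ in $\g$; (b) the part where the $\tau$-derivative hits the $\lambda$-argument of the leftmost $g$-factor, giving $\mathcal L_{\rho(x)}y$; (c) the symmetric part from the $g^{\ast-1}$ factor, giving $-\mathcal L_{\rho(y)}x$ (one should double-check whether the inverse factor also contributes a twisted term, but since at $\epsilon=\tau=0$ all arguments collapse to $s$, only first-order twists survive in the mixed derivative). Assembling, one obtains $\pm(\mathcal L_{\rho(x)}y - \mathcal L_{\rho(y)}x) - [x,y]$, and tracking signs shows this is $-\llbracket x,y\rrbracket$ with the convention $(\ref{leftddblbracketDef})$ (the overall sign is also consistent with the earlier Remark that for a trivial action one recovers the negative of the pointwise bracket). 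Fourth, I would repeat the computation for a right action using $(g\ast h)(s)=g(s)\,h(\lambda(g(s),s))$, checking that the sign flip matches $(\ref{rightddblbracketDef})$.

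The main obstacle I anticipate is bookkeeping of the nonlinear $\lambda$-arguments in the mixed second derivative: because the twist $\lambda(h(s),s)$ depends on $h$, differentiating the composite $g\big(\lambda(h(s),s)\big)$ in both parameters produces cross terms, and one must be careful that the genuinely second-order-in-$\lambda$ terms (which would involve second derivatives of the action, i.e.\ prolongation-type quantities) in fact cancel in the antisymmetrized, mixed $\partial_\epsilon\partial_\tau$ expression, leaving only the first-order Lie-derivative terms. A clean way to control this is to note $c_{\epsilon,\tau}=e+\epsilon\,[\text{stuff}] + \tfrac12\epsilon^2(\cdots)+\epsilon\tau\,B(x,y)+\cdots$ and to isolate the $\epsilon\tau$-coefficient, where by construction only bilinear-in-$(x,y)$ first-order data can appear; alternatively one can invoke that the $\ast$-product restricted to curves through $e$ agrees to second order with the group multiplication of the Lie algebroid bisection groupoid (cf.\ \cite{Sch}), whose bracket is known to be $\pm\llbracket\,,\,\rrbracket$, and simply verify the second-order agreement. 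Modulo this sign/cross-term care, the proof is a direct expansion.
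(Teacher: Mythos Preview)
Your approach is essentially the same as the paper's: both compute the bracket by differentiating the $\ast$-conjugation $g\ast h\ast g^{\ast-1}$ twice at the identity. The paper factors this as two steps---first differentiate in $h$ to obtain an explicit formula for the $\ast$-Adjoint $\mathcal{A}d_\ast(g)(x)$, then differentiate that in $g$---rather than taking the mixed $\partial_\epsilon\partial_\tau$ derivative in one shot, but the content is the same. The point you flag as the main obstacle (controlling the $\ast$-inverse) is exactly where the paper invests its care: it uses the identity $g^{\ast-1}(g(s)\cdot s)=g^{-1}(s)$ to compute $\frac{\partial}{\partial\epsilon}\big\vert_0 g^{\ast-1}_\epsilon(s)=-x(s)$ cleanly, which then makes the second differentiation routine and avoids the second-order-in-$\lambda$ cross terms you worry about.

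One small correction to your bookkeeping: in your item (b), the $\tau$-derivative of the argument $\lambda(h_\tau(s),s)$ produces the vector field $\rho(y)$, and this acts on the $x$ coming from $g_\epsilon$, so the term is $\mathcal L_{\rho(y)}x$, not $\mathcal L_{\rho(x)}y$; the labels in (b) and (c) are swapped. This does not affect your final assembled expression (you hedged with a $\pm$ anyway), but it is worth getting right since the sign conventions here are delicate.
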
}
{Notice that  $G$ is a formal Lie group, and hence we do not have a guarantee that the operations are smooth. The reason for it is that even though $g^{\ast-1}$ exists, to guarantee the smoothness of $g \to g^{\ast-1}$ we will need to apply an infinite dimensional inverse function theorem, theorems that, when they exist, require strong conditions and thus depend very much on each individual case. For example, different arguments may be applied when either $M$ or $G$ is compact, (A.\ Schmeding, (2018) Private communication). Nevertheless, in order to differentiate to define the associated Lie algebra, we only need to guarantee that the map $g_\epsilon \to g^{\ast-1}_\epsilon$ is smooth, for a one-parameter family $g_\epsilon$ such that $g_0 = e$ and $\frac d{d\epsilon}|_{\epsilon=0}g_\epsilon = v\in A^1(M, \g)$. This is guaranteed by the standard finite dimensional inverse function theorem under the hypothesis of proposition \ref{prop1}.}
{\begin{proof}
Assume $G$ acts on the left on $M$, and let $h(\epsilon, s)$ be a curve in $A^1(M,M\times G)$ with $h(0,s) = e$ and $\frac d{d\epsilon}|_{\epsilon=0}h(\epsilon,s) = x(s)\in A^1(M, M\times \g)$. If we differentiate $g\ast h\ast g^{\ast-1}(s) = g\left([h(g^{\ast-1}(s)\cdot s)]\cdot g^{\ast-1}(s)\cdot s\right)h(g^{\ast-1}(s)\cdot s) g^{\ast-1}(s)$ with respect to $\epsilon$ at $\epsilon=0$, we obtain the $\ast$-Adjoint action of $A^1(M, G)$ on $A^1(M, \g)$, given by
\begin{eqnarray*}
A^1(M, G)\times A^1(M, \g) \to& A^1(M, \g), \\ (g(s), x(s)) \to& g(g^{\ast-1}(s)\cdot s) x(g^{\ast-1}(s)\cdot s) g^{\ast-1}(s)\\&+ T_R{g^{\ast-1}(s)} \left(\mathcal{L}_{X(x(g^{\ast-1}(s)\cdot s))}g\right))(g^{\ast-1}(s)\cdot s),
\end{eqnarray*}
where $T_R{g(s)}$ denotes the derivative of the right multiplication by $g(s)$. Notice that $g(s)\ast g^{\ast-1}(s) = g(g^{\ast-1}(s)\cdot s)g^{\ast-1}(s) = e$, and so $g(g^{\ast-1}(s)\cdot s)= (g^{\ast-1})^{-1}(s)$. Therefore, the $\ast$-Adjoint is given by
\[
\mathcal{A}d_\ast(g)(x)(s) = \Ad((g^{\ast-1})^{-1}(s))(x(g^{\ast-1}(s)\cdot s)) + T_R{g^{\ast-1}(s)} \left(\mathcal{L}_{X(x(g^{\ast-1}(s)\cdot s))}g\right))(g^{\ast-1}(s)\cdot s).
\]
Next, assume we differentiate in the direction of $g(s)$. Let $g(\epsilon, s)$ be a curve in $A^1(M,M\times G)$ such that $g(0,s)=e$ for all $s$ and $\frac{dg}{d\epsilon}|_{\epsilon=0}(s)
= y(s)\in A^1(M, M\times \g)$. Using that $g^{\ast-1}\ast g = e$ and so $g^{\ast-1}(\epsilon, g(\epsilon, s)\cdot s) = g^{-1}(\epsilon,s)$, differentiating both sides we get
\[
\frac{\partial g^{\ast-1}}{\partial\epsilon}|_{\epsilon=0}(s) + \mathcal{L}_{X(y)}(g^{\ast-1}(0,s)) = \frac{\partial g^{\ast-1}}{\partial\epsilon}|_{\epsilon=0}(s) = -y(s)
\]
since $g(0,s) = e$ implies that $g^{\ast-1}(0,s) = e$ also.
We have 
\[
\frac d{d\epsilon}|_{\epsilon=0} \Ad((g^{\ast-1})^{-1}(s))(x(g^{\ast-1}(s)\cdot s))  = [y(s), x(s)] - (\mathcal{L}_{X(y)} x)(s),
\]
while, since  $g(0,s)$ is constant, all the terms in
\[
\frac d{d\epsilon}|_{\epsilon=0}T_R{g^{\ast-1}(s)} \left(\mathcal{L}_{X(x(g^{\ast-1}(s)\cdot s))}g\right))(g^{\ast-1}(s)\cdot s)
\]
will vanish due to the Lie derivative of $g$ being zero as $\epsilon = 0$, except for the one involving differentiation of $g$, namely
\[
\left(\mathcal{L}_{X(x(s))}\frac{\partial g}{\partial\epsilon}|_{\epsilon=0}\right)(s) = \mathcal{L}_{X(x(s))} y.
\]
Therefore, the adjoint action of $A^1(M, \g)$ on itself (the Lie bracket), is given by 
\begin{eqnarray*}
A^1(M, \g)\times A^1(M, \g) \to& A^1(M, \g), \\ (y(s), x(s)) \to& [y,x]-\mathcal{L}_{X(y)} x + \mathcal{L}_{X(x)} y = -\llbracket y , x \rrbracket
\end{eqnarray*}
A similar process proves the case of a right action of $G$ on $M$.\end{proof}}


{\begin{rem} It is worth remarking that the structure of $\ast$-Lie group is not global in general and $\ast$-inverses might not exist even if $g(s)$ is $C^0$-close to $e(s)$. Indeed, it suffices to consider a left action of $G$ on $M$, and a right moving frame for the action associated to a cross section $\mathcal{C}$. That is $g(s)$, $s\in M$ satisfies $g(s)\cdot s\in \mathcal{C}$ and  $g(f\cdot s) = g(s) f^{-1}$, for all $s$ and all $f\in G$.  
In general moving frames might only exist locally, but on the neighborhood where they exist, one would have that $(g\ast h)(s) = g(h(s)\cdot s) h(s) = g(s) h^{-1}(s) h(s) = g(s)$, for {\it any} $h(s) \in A^1(M, G)$. That is, $g$ has no $\ast$-inverse.
\end{rem}}

\section{Infinite dimensional Poisson brackets}
In this chapter we will assume that $M=S^1$ or $\R$. The study of the possible applications of the second bracket to the study of completely integrable PDEs is under way in this case. Still, preliminary results point at a possible use and we are including them here, in our last section. In particular, we are able to prove that the standard cocycle used to construct a central extension of the algebra $C^\infty(S^1, \g)$ is also a cocycle when we consider $\g$ endowed with the second bracket, rather than the first. Furthermore, standard arguments allow us to define a companion bracket to the Lie-Poisson bracket associated to $\llbracket\, , \, \rrbracket$. These companions are often associated to bi-Hamiltonian structures for well known completely integrable PDEs (e.g. KdV in the case of $\g= \mathfrak{sl}(2,\R)$). We are currently studying under which conditions this Hamiltonian pencil can be used to integrate PDE's.
\subsection{Central extensions} a central extension of a Lie algebra $\g$ is given by $\g\oplus \R$, with the standard algebra structure and the Lie bracket
\begin{equation}\label{CentExtBr}
[(x,l), (y,t)] = ([x,y], \beta(x,y)).
\end{equation}
where the map 
 \[\beta: \mathfrak{g}\times \mathfrak{g} \rightarrow \mathbb{R}\]
 is skew-symmetric, bilinear, and  satisfies what is known as a cocycle condition,
\[ \beta( w_1,[w_2,w_3])+\beta(w_2,[w_3,w_1])+\beta(w_3,[w_1,w_2])=0.\]
Under these conditions, (\ref{CentExtBr}) does indeed define a Lie bracket, being bilinear, skew-symmetric and satisfying the Jacobi identity. Any linear map  $\theta:\mathfrak{g}\rightarrow \mathbb{R}$ defines
a central extension, with $\beta(x,y)=\theta([x,y])$. Such extensions are called trivial, or coboundaries.

The dual to this Lie algebra can be identified with $\g^\ast\oplus\R$, and the action on the central extension is defined as
\[
(\xi, l)(x,t) = \xi(x)+lt.
\]
\subsubsection{Lie-Poisson bracket for a central extension}
Assume $F$ and $H$ are two functions on $\g^\ast\oplus \R$. The variational derivative of $F$ and $H$ are given using the relation
\[
\frac d{d\epsilon}|_{\epsilon=0} F((\xi,t)+\epsilon (\nu,r)) = (\nu,r)(\delta_\xi F, \delta_tF) = \nu(\delta_\xi F)+r\delta_tF
\]
for all $\nu\in \g^\ast$, and where $\xi\in \g^\ast$ and $\delta_\xi F\in \g$, $\delta_tF\in \R$. The Lie-Poisson bracket is thus defined as
\begin{equation}\label{Poisson}
\{F, H\}(\xi,r) = (\xi,r)\left( [(\delta_\xi F, \delta_tF), (\delta_\xi H, \delta_tH)]\right) = \xi([\delta_\xi F, \delta_\xi H])+r\beta(\delta_\xi F,\delta_\xi H).
\end{equation}
Notice that once we fix $r$, we have a Poisson bracket on $\g^\ast$. It is known that the $r$-family of Poisson brackets are all equivalent, except for $r=0$. It is customary to assume that $r=-1$, or $r=1$, depending on the case, but we can choose any constant. 

\subsubsection{First bracket}
Assume that the bracket in our algebra is the standard commutator, $[,]$, and assume that $\g$  is semisimple so that we can identify $\g$ and $\g^\ast$ using the trace inner product, assuming that they have a representation as matrices. That is, if $\xi \in \g^\ast$, then there exists $x^\xi\in \g$, such that
\[
\xi(x) = {\rm tr}(x^\xi x)
\]
for all $x\in \g$.  If we consider, for example, $M=S^1$ and smooth sections $C^\infty(S^1,\g)$ and $C^\infty(S^1,\g)^\ast$ instead finite dimensional algebras, the inner product would be
\[
\xi(x) = \int_M{\rm tr}(x^\xi x).
\]
Instead of $S^1$ we could choose smooth functions on $\R$ vanishing at infinity, or other conditions that will ensure that boundary conditions during integration will vanish. Let us denote by $s$ the coordinates in $M$ ($S^1$ or $\R$). It is known that $\beta$ defined as 
\[
\beta(x,y) = \int_M {\rm tr}(x y_s)
\]
is a cocycle for our first bracket.
 In this case, the Poisson structure (\ref{Poisson}) becomes
\[
\{F, H\}_1(\xi,r) = \xi([\delta_\xi F, \delta_\xi H])+r\beta(\delta_\xi F,\delta_\xi H) 
\]
\[
= \int_M{\rm tr}\left(x^\xi\left[\delta_\xi F, \delta_\xi H\right] + r\delta_\xi F(\delta_\xi H)_s\right) ds
=  \int_M{\rm tr}\left(\left(\left[x^\xi,\delta_\xi F\right] - r(\delta_\xi F)_s\right)\delta_\xi H\right) ds.
\]
If we choose $r=-1$, then we have
\begin{equation}\label{firstbr}
\{F, H\}_1(\xi,r) = \int_M{\rm tr}\left(\left((\delta_\xi F)_s+ \left[x^\xi,\delta_\xi F\right]\right)\delta_\xi H\right) ds.
\end{equation}
Since this is true for all $H$, the Hamiltonian vector field for $F$ can be identified with $(\delta_\xi F)_s+ \left[x^\xi,\delta_\xi F\right]$ and the Hamiltonian evolutions with $(x^\xi)_t = (\delta_\xi F)_s+ \left[x^\xi,\delta_\xi F\right]$. 

\subsubsection{Second bracket}

\begin{prop} The map $\beta$ is also a cocycle when $C^\infty(S^1, \g)$ is endowed with the second bracket 
$\llbracket\, , \, \rrbracket$.
\end{prop}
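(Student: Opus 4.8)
The plan is to reduce the cocycle identity for $\llbracket\,,\,\rrbracket$ to the cocycle identity for the pointwise bracket $[\,,\,]$ (already known, as recalled above), using only bilinearity of $\beta$ together with a symmetry property of the Lie-derivative part of $\llbracket\,,\,\rrbracket$.

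First I would record the shape of the bracket on $M=S^1$ (or $\R$): since $\dim M=1$, the anchor is $\rho(x)=a_x(s)\,\partial_s$ for a scalar function $a_x$ depending linearly on $x$, and $\mathcal{L}_{\rho(x)}y=a_x\,y_s$, where $y_s$ is the componentwise $s$-derivative, which coincides with the matrix derivative in the chosen matrix representation. Hence, for a left action, $\llbracket w_2,w_3\rrbracket=a_{w_2}(w_3)_s-a_{w_3}(w_2)_s-[w_2,w_3]$; the right-action case only flips the sign of the first two terms and is handled identically. Using that $\beta$ is bilinear, the cyclic cocycle sum $S:=\sum_{\mathrm{cyc}}\beta(w_1,\llbracket w_2,w_3\rrbracket)$ splits as
\[
S=\Big(\sum_{\mathrm{cyc}}\beta(w_1,\mathcal{L}_{\rho(w_2)}w_3)-\sum_{\mathrm{cyc}}\beta(w_1,\mathcal{L}_{\rho(w_3)}w_2)\Big)-\sum_{\mathrm{cyc}}\beta(w_1,[w_2,w_3]),
\]
and the last term vanishes because $\beta$ is a cocycle for the first bracket. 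Regrouping the remaining six terms according to which of $w_1,w_2,w_3$ sits inside the Lie derivative, $S$ becomes a sum of three expressions, each of the form $\beta(x,\mathcal{L}_{\rho(w)}y)-\beta(y,\mathcal{L}_{\rho(w)}x)$ for a fixed $w$.

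The key step is then a short lemma: $\mathcal{L}_{\rho(w)}$ is \emph{$\beta$-symmetric}, i.e.\ $\beta(x,\mathcal{L}_{\rho(w)}y)=\beta(y,\mathcal{L}_{\rho(w)}x)$ for all $w,x,y$. To see this, write $a=a_w$, so $\mathcal{L}_{\rho(w)}y=a\,y_s$ and $\partial_s(a\,y_s)=a_s y_s+a\,y_{ss}$; then, by cyclicity of the trace,
\[
\beta(x,\mathcal{L}_{\rho(w)}y)-\beta(y,\mathcal{L}_{\rho(w)}x)=\int_M\Big(a_s\big(\mathrm{tr}(x\,y_s)-\mathrm{tr}(x_s\,y)\big)+a\big(\mathrm{tr}(x\,y_{ss})-\mathrm{tr}(x_{ss}\,y)\big)\Big)\,ds.
\]
Setting $\phi:=\mathrm{tr}(x\,y_s)-\mathrm{tr}(x_s\,y)$, one checks $\phi_s=\mathrm{tr}(x\,y_{ss})-\mathrm{tr}(x_{ss}\,y)$, so the integrand is exactly $\partial_s(a\phi)$, and the integral vanishes by periodicity on $S^1$ (or by the decay assumptions on $\R$). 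Consequently each of the three regrouped terms is zero, hence $S=0$, which is the cocycle condition.

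I do not expect a genuine obstacle here: once the componentwise description of $\mathcal{L}_{\rho(x)}$ on a one-dimensional base is in hand, everything is bilinear bookkeeping plus two integrations by parts. The only point requiring a little care is the combinatorics of the cyclic sum — making sure the regrouping genuinely produces the symmetric combinations $\beta(x,\mathcal{L}_{\rho(w)}y)-\beta(y,\mathcal{L}_{\rho(w)}x)$, and that the pointwise-bracket contribution is isolated cleanly. The latter is automatic precisely because $\beta$ is bilinear, so no cross terms between the Lie-derivative part and the $[\,,\,]$ part ever arise.
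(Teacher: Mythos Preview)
Your proof is correct, but it takes a different route from the paper's. The paper places the bracket in the \emph{first} slot of $\beta$, writing the cyclic sum as $\beta(\llbracket y,z\rrbracket,x)+\beta(\llbracket z,x\rrbracket,y)+\beta(\llbracket x,y\rrbracket,z)$. Since $\beta(u,v)=\int_M\mathrm{tr}(u\,v_s)$, the Lie-derivative contributions then appear \emph{without} any further differentiation: the six extra terms have integrand $\hat\rho(y,s)\,\mathrm{tr}(z_sx_s)-\hat\rho(z,s)\,\mathrm{tr}(y_sx_s)+\cdots$, and grouping by the scalar $\hat\rho(\cdot,s)$ gives $\hat\rho(y,s)\,\mathrm{tr}([z_s,x_s])+\hat\rho(z,s)\,\mathrm{tr}([x_s,y_s])+\hat\rho(x,s)\,\mathrm{tr}([y_s,z_s])$, which vanishes \emph{pointwise} because the trace of a commutator is zero. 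No integration by parts is needed at all. Your approach, by contrast, puts the bracket in the second (differentiated) slot, which forces you to handle $\partial_s(a\,y_s)$ and leads you to the $\beta$-symmetry lemma $\beta(x,\mathcal{L}_{\rho(w)}y)=\beta(y,\mathcal{L}_{\rho(w)}x)$ via a total-derivative argument. This is valid and the lemma is a clean, reusable statement, but it genuinely depends on the boundary conditions (periodicity or decay), whereas the paper's argument shows the integrand itself is identically zero. In short: same reduction to the pointwise-bracket cocycle, but the paper's grouping by $\hat\rho$ is shorter and stronger, while yours trades that for an auxiliary symmetry lemma proved by integration by parts.
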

\begin{proof}
It suffices to show that 
\[
\beta(\llbracket y,z\rrbracket, x)+\beta(\llbracket z,x\rrbracket,y)+\beta(\llbracket x,y\rrbracket,z) = 0
\]
and, since $\beta$ is a cocycle for the first bracket, that
\[
\int_M\mathrm{tr}(\rho(y)(z) x_s- \rho(z)(y) x_s) + \mathrm{tr}(\rho(z)(x) y_s- \rho(x)(z) y_s)+\mathrm{tr}(\rho(x)(y) z_s- \rho(y)(x) z_s) = 0. 
\]
Now, if dim$M=1$, we have that $\rho(x)(y) = \hat\rho(x,s) y_s$, for some {\it scalar} $\hat\rho(x,s)$. This implies that the above integrand is equal to
\[
\mathrm{tr}(\hat\rho(y,s)z_s x_s- \hat\rho(z,s)y_s x_s + \hat\rho(z,s)x_s y_s- \hat\rho(x,s)z_s y_s+\hat\rho(x,s)y_s z_s- \hat\rho(y,s)x_s z_s)
\]
\[
=\mathrm{tr}\left(\hat\rho(y,s)[z_s, x_s]+ \hat\rho(z,s)[x_s, y_s]+\hat\rho(x,s)[y_s, z_s]\right)
\]
\[
=\hat\rho(y,s)\mathrm{tr}([z_s, x_s])+ \hat\rho(z,s)\mathrm{tr}([x_s, y_s])+\hat\rho(x,s)\mathrm{tr}([y_s, z_s]) = 0
\]
since the trace of the commutator vanishes.
\end{proof}
Notice that the scalar $\hat\rho(x,s)$ is linear in $x$. Therefore, since $\g$ is semisimple, we can write it as 
\begin{equation}\label{E}
\hat\rho(x,s) = \mathrm{tr}(xE(s))
\end{equation}
for some matrix $E(s)\in \g$ depending on $s\in M$. Notice also that the inner product defined by the trace is not invariant under the Adjoint action associated to $\llbracket\, , \, \rrbracket$. This fact will cause the appearance of extra terms in a typical Hamiltonian evolution.

Using that $\beta$ is also a cocycle for $\llbracket\, , \, \rrbracket$, we  can write the Lie-Poisson structure associated to the central extension of $\llbracket\, , \, \rrbracket$ as
\[
\{F, H\}(\xi,r) = \int_M{\rm tr}\left(x^\xi\left[\left[\delta_\xi F, \delta_\xi H\right]\right] + r\delta_\xi F(\delta_\xi H)_s\right) ds
\]
\[
=\int_M{\rm tr}\left(x^\xi\left(\left[\delta_\xi F, \delta_\xi H\right] + {\rm tr}(\delta_\xi F E(s))(\delta_\xi H)_s - {\rm tr}(\delta_\xi H E(s))(\delta_\xi F)_s\right) + r\delta_\xi F(\delta_\xi H)_s\right) ds.
\]

\[
 =\int_M{\rm tr}\left([x^\xi,\delta_\xi F] \delta_\xi H+ {\rm tr}((\delta_\xi F) E)x^\xi(\delta_\xi H)_s - {\rm tr}((\delta_\xi H) E)x^\xi(\delta_\xi F)_s + r\delta_\xi F(\delta_\xi H)_s\right) ds
\]
\[
=\alpha\int_0^{2\pi}{\rm tr}\left(\left([x^\xi,\delta_\xi F] - r (\delta_\xi F)_s -\left( {\rm tr}((\delta_\xi F) E)x^\xi\right)_s\right)\delta_\xi H - {\rm tr}((\delta_\xi H) E)x^\xi(\delta_\xi F)_s \right) ds
\]
Finally, we have
\[
{\rm tr}\left({\rm tr}((\delta_\xi H) E)x^\xi(\delta_\xi F)_s \right) = {\rm tr}(E(\delta_\xi H)){\rm tr}\left(x^\xi(\delta_\xi F)_s \right)= {\rm tr}({\rm tr}\left(x^\xi(\delta_\xi F)_s \right)E(\delta_\xi H)).
\]
From here, the bracket can be written as
\begin{equation}\label{secondbr}
\{F, H\}(\xi,r) = \alpha\int_0^{2\pi}{\rm tr}\left(\left\llbracket x^\xi,\delta_\xi F] - r (\delta_\xi F)_s -\left( {\rm tr}((\delta_\xi F) E)x^\xi\right)_s - {\rm tr}\left(x^\xi(\delta_\xi F)_s \right)E \right]\delta_\xi H \right) ds
\end{equation}
and so, if $r=-1$, the Hamiltonian vector field can be identified with the element of the algebra on the left and the Hamiltonian evolution would be
\[
(x^{\xi})_t =  (\delta_\xi F)_s +[x^\xi,\delta_\xi F]  -\left( {\rm tr}((\delta_\xi F) E)x^\xi\right)_s - {\rm tr}\left(x^\xi(\delta_\xi F)_s \right)E.
\]

The following result is known for general Lie algebras, but we reproduce the proof to make the paper self-contained. 
{\begin{theorem}
Consider the bracket
\begin{equation}\label{zerobr}
\{F,H\}_0(\xi) = \alpha\int_0^{2\pi}{\rm tr}\left( x^{\xi_0} \llbracket\delta_\xi F, \delta_\xi H\rrbracket\right) ds,
\end{equation}
where $\xi_0\in \g^\ast$ is constant. The bracket (\ref{zerobr}) is Poisson and compatible with (\ref{secondbr}) for any value of the central parameter $r$ and any choice of $\xi_0$.
\end{theorem}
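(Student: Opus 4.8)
The plan is to exploit the general theory of Lie--Poisson brackets: for \emph{any} Lie algebra $\mathfrak{a}$ with a fixed (central) element $\xi_0$ in $\mathfrak{a}^\ast$, the bracket $\{F,H\}_0(\xi)=\xi_0([\delta_\xi F,\delta_\xi H]_{\mathfrak a})$ is always Poisson, and it is a standard fact that the ``frozen-at-$\xi_0$'' bracket is compatible with the full Lie--Poisson bracket $\{F,H\}(\xi)=\xi([\delta_\xi F,\delta_\xi H]_{\mathfrak a})$. Here the relevant Lie algebra is the central extension of $(C^\infty(S^1,\mathfrak g),\llbracket\,,\,\rrbracket)$ by the cocycle $\beta$, whose bracket is $[(x,\ell),(y,t)]=(\llbracket x,y\rrbracket,\beta(x,y))$ --- this is a genuine Lie bracket precisely because of the Proposition just proved that $\beta$ is a cocycle for $\llbracket\,,\,\rrbracket$. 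So I would first record that (\ref{secondbr}) is exactly the Lie--Poisson bracket of this central extension (with $x^{\xi}=\delta_\xi F$ identified via the trace pairing, and the $E$-terms being the translation of $\beta$ into the trace identification, as already computed in the text), and that (\ref{zerobr}) is the same construction with $\xi$ replaced by the constant $\xi_0$ (on the centre the extension contributes nothing extra, which is why only the $\mathfrak g$-part $x^{\xi_0}$ appears and the value of $r$ is irrelevant to (\ref{zerobr})).

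Next I would prove the general lemma in the form needed here: if $\mathfrak a$ is a finite- or infinite-dimensional Lie algebra and $\mu_0\in\mathfrak a^\ast$ is fixed, then $\{F,H\}_0(\mu)=\mu_0([\delta_\mu F,\delta_\mu H])$ is Poisson and $\{\,,\,\}+\lambda\{\,,\,\}_0$ is Poisson for all $\lambda$. Skew-symmetry and Leibniz are immediate; the only content is the Jacobi identity. The clean way is to observe that the affine shift $\mu\mapsto\mu+\lambda\mu_0$ is a Poisson map from $(\mathfrak a^\ast,\{\,,\,\})$ to itself in the sense that it pulls the Lie--Poisson bracket back to $\{\,,\,\}+\lambda\{\,,\,\}_0+(\text{const})\{\,,\,\}$-type combination; more precisely, since $\delta_\mu$ of a function is unchanged by adding a constant to $\mu$, one has $\{F,H\}(\mu+\lambda\mu_0)=\mu([\delta F,\delta H])+\lambda\mu_0([\delta F,\delta H])=\{F,H\}(\mu)+\lambda\{F,H\}_0(\mu)$ --- wait, that shows directly that $\{\,,\,\}+\lambda\{\,,\,\}_0$ is just $\{\,,\,\}$ precomposed with a translation, hence automatically Poisson, and taking $\lambda\to\infty$ (rescaling) gives that $\{\,,\,\}_0$ alone is Poisson. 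This is the heart of the argument and I would present it in exactly this ``translation-covariance of Lie--Poisson brackets'' form.

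The one genuine subtlety, and the step I expect to be the main obstacle, is the infinite-dimensional bookkeeping: functional derivatives $\delta_\xi F$ only exist under regularity/decay hypotheses, the pairing is an integral over $S^1$ (or $\R$ with decay), and one must be sure that the formal manipulation ``$\delta$ is insensitive to adding a constant element of $\mathfrak a^\ast$'' is legitimate here --- i.e.\ that $\xi_0$ (constant in $s$, with values in $\mathfrak g^\ast$) really is an element of the relevant dual and that the translation $\xi\mapsto\xi+\lambda\xi_0$ is well-defined on the space of functionals in play. I would handle this by working with the same class of admissible functionals used for (\ref{secondbr}) (those for which $\delta_\xi F$ exists as a smooth $\mathfrak g$-valued section, or an element of $\mathfrak g\oplus\R$ in the extended picture), noting that these are the functionals for which both brackets are defined in the first place, and that on this class the identity $\{F,H\}(\xi+\lambda\xi_0,r)=\{F,H\}(\xi,r)+\lambda\{F,H\}_0(\xi)$ holds by direct inspection of (\ref{secondbr}): adding the constant $\lambda\xi_0$ changes $x^\xi\mapsto x^\xi+\lambda x^{\xi_0}$ inside the trace, the $E$-terms and the $r(\delta_\xi F)_s$ term are unaffected (they don't involve $x^\xi$ linearly in a way that is killed, except the $\llbracket x^\xi,\delta_\xi F\rrbracket$ and the two $E$-coupling terms, all of which are linear in $x^\xi$), and collecting the $\lambda$-linear part reproduces $\alpha\int \mathrm{tr}(x^{\xi_0}\llbracket\delta_\xi F,\delta_\xi H\rrbracket)$ once one uses the skew-symmetry of $\llbracket\,,\,\rrbracket$ and the cocycle/trace identities already established. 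Since a linear combination that is a pullback of a Poisson bracket by a (translation) diffeomorphism is Poisson, and since the Jacobi identity is a pointwise-in-$\xi$ identity, compatibility for all $r$ and all $\xi_0$ follows; the independence from $r$ is automatic because the $r$-term is unchanged by the translation and so never appears in $\{\,,\,\}_0$.
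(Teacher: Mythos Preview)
Your overall strategy is sound and is the standard ``frozen argument'': for any Lie--Poisson bracket on $\mathfrak a^\ast$, translation by a fixed $\mu_0$ carries $\{\,,\,\}$ to the pencil $\{\,,\,\}+\lambda\{\,,\,\}_0$, so the pencil is Poisson for every $\lambda$, and extracting the $\lambda^2$ coefficient of the Jacobiator (equivalently, rescaling and letting $\lambda\to\infty$) shows $\{\,,\,\}_0$ is Poisson on its own. The paper takes a different route for the Jacobi identity of $\{\,,\,\}_0$: it computes $\delta_\xi\{F,H\}_0$ directly in terms of second variations, obtaining $\delta_\xi\{F,H\}_0 = D^2F(\llbracket\delta_\xi H,\xi_0\rrbracket^\ast,\ast) - D^2H(\llbracket\delta_\xi F,\xi_0\rrbracket^\ast,\ast)$, and then checks that the cyclic sum of $\{\{F,H\}_0,G\}_0$ vanishes by the symmetry of the Hessians $D^2F, D^2G, D^2H$. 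For compatibility, however, the paper uses exactly your translation observation (it remarks that adding the two brackets ``merely translates $x^\xi$ to $x^\xi+x^{\xi_0}$''), so on that half the two arguments coincide. Your approach is more conceptual and avoids the second-variation bookkeeping; the paper's is more self-contained and makes the role of the Lie bracket $\llbracket\,,\,\rrbracket$ explicit at each step.

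One line in your sketch is wrong as written and needs repair. You assert ``since $\delta_\mu$ of a function is unchanged by adding a constant to $\mu$, one has $\{F,H\}(\mu+\lambda\mu_0)=\{F,H\}(\mu)+\lambda\{F,H\}_0(\mu)$''. This identity is false: $\delta_\mu F$ genuinely depends on $\mu$, and the left side has derivatives evaluated at $\mu+\lambda\mu_0$ while the right has them at $\mu$. The correct statement is that the translation $T_\lambda:\mu\mapsto\mu+\lambda\mu_0$ has derivative the identity, so $\delta_\mu(F\circ T_\lambda)=\delta_{T_\lambda(\mu)}F$, and a two-line computation then gives that $T_\lambda$ pulls the Lie--Poisson bracket back to $\{\,,\,\}+\lambda\{\,,\,\}_0$. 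With this correction your argument goes through, including your remark that the cocycle term $r\beta(\delta_\xi F,\delta_\xi H)$ is unaffected by translating $\xi$ (it involves only the variational derivatives, not $x^\xi$), which is why compatibility holds for every value of $r$.
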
}
{\begin{proof}
First of all, recall  that the second variation of a functional $F(\xi)$ defined on $\g^\ast$ is a symmetric bilinear form on $\g^\ast$, which we will denote by $D^2F(\xi)(\ast,\ast)$. In that sense, $D^2F(\xi)(\eta,\ast)$ is a linear map on $\g^\ast$, and can be identified with an element of $(\g^\ast)^\ast = \g$, as follows
\[
\nu(D^2 F(\xi)(\eta,\ast)) = D^2 F(\xi)(\eta,\nu) = \eta(D^2 F(\xi)(\nu,\ast))
\]
for any $\xi,\eta,\nu\in\g^\ast$. To make calculations easier to read, let us denote
\[
\nu(x) = \langle \nu,x\rangle
\]
for any $\nu\in\g^\ast, ~x\in \g$. In that case, the bracket can be written as 
\[
\{F,H\}_0(\xi) = \alpha\int_0^{2\pi}\langle \xi_0, \llbracket\delta_\xi F, \delta_\xi H\rrbracket\rangle ds.
\]
Since $\llbracket \, , \, \rrbracket$ is a Lie bracket, we have that 
\[
D\llbracket\delta_\xi F, \delta_\xi H\rrbracket(\xi)(\eta) = \llbracket D^2F(\xi)(\eta, \ast), \delta_\xi H\rrbracket+ \llbracket\delta_\xi F, D^2H(\xi)(\eta,\ast)\rrbracket.
\]
Therefore
\[
\int_0^{2\pi}\langle\eta,\delta_\xi\{F,H\}_0\rangle ds =\int_0^{2\pi}\langle \xi_0, \llbracket D^2F(\xi)(\eta, \ast), \delta_\xi H\rrbracket+ \llbracket\delta_\xi F, D^2H(\xi)(\eta,\ast)\rrbracket\rangle ds\]
\[
=\int_0^{2\pi}\langle \llbracket\delta_\xi H,\xi_0\rrbracket^\ast, D^2F(\eta,\ast)\rangle - \langle\llbracket\delta_\xi F,\xi_0\rrbracket^\ast, D^2H(\xi)(\eta,\ast)\rangle ds
\]
\[
=\int_0^{2\pi} \left( D^2F(\eta,\llbracket\delta_\xi H,\xi_0\rrbracket^\ast) - D^2H(\xi)(\eta,\llbracket\delta_\xi F,\xi_0\rrbracket^\ast)\right) ds
\]
\[
=\int_0^{2\pi}\langle\eta,D^2F(\llbracket\delta_\xi H,\xi_0\rrbracket^\ast,\ast) - D^2H(\xi)(\llbracket\delta_\xi F,\xi_0\rrbracket^\ast,\ast)\rangle ds
\]
where $\llbracket\, , \, \rrbracket^\ast$ is the dual of $\llbracket\, , \, \rrbracket$. From here we conclude that 
\[
\delta_\xi\{F,H\}_0 = D^2F(\llbracket\delta_\xi H,\xi_0\rrbracket^\ast,\ast) - D^2H(\xi)(\llbracket\delta_\xi F,\xi_0\rrbracket^\ast,\ast).
\]
We can now calculate Jacobi's identity. If $F,G,H$ are three functionals on $A^1(M,\g^\ast)$, we have
\[
\{\{F,H\}_0,G\}_0(\xi) = \alpha\int_0^{2\pi}\langle \xi_0, \llbracket\delta_\xi\{F,H\}_0, \delta_\xi G\rrbracket\rangle ds = \alpha\int_0^{2\pi}\langle\llbracket\delta_\xi G,\xi_0\rrbracket^\ast,\delta_\xi\{F,H\}_0\rangle ds
\]
\[
=\alpha\int_0^{2\pi}\langle\llbracket\delta_\xi G,\xi_0\rrbracket^\ast,D^2F(\llbracket\delta_\xi H,\xi_0\rrbracket^\ast,\ast) - D^2H(\xi)(\llbracket\delta_\xi F,\xi_0\rrbracket^\ast,\ast)\rangle ds 
\]
\[= \alpha\int_0^{2\pi}D^2F(\llbracket\delta_\xi H,\xi_0\rrbracket^\ast,\llbracket\delta_\xi G,\xi_0\rrbracket^\ast) - D^2H(\xi)(\llbracket\delta_\xi F,\xi_0\rrbracket^\ast,\llbracket\delta_\xi G,\xi_0\rrbracket^\ast) ds.
\]
Using the symmetry of the second variation, we can conclude that the circular sum of this expression vanishes. A similar calculation shows that the sum of (\ref{secondbr}) and (\ref{zerobr}) is also Poisson (notice that adding both brackets merely translates $x^\xi$ to $x^{\xi} + x^{\xi_0}$ in the expression of (\ref{secondbr}), so the proof that the sum is Poisson is almost identical to that of (\ref{secondbr}) being Poisson).
\end{proof}}

\end{document}